  \newtheorem{lem}[subsubsection]{Lemma}
  \newtheorem{prop}[subsubsection]{Proposition}
   \newtheorem{theo}[subsubsection]{Theorem}
  \newtheorem{cor}[subsubsection]{Corollary}
    \newtheorem{Defi}[subsubsection]{Definition}
\theoremstyle{plain}
\newtheorem*{theo*}{\theoname}
\theoremstyle{remark}
\newtheorem{rema}[subsubsection]{Remark}
\def\a{\alpha}
\def\b{\beta}
\def\prf{\begin{proof}} 
\def\eprf{\end{proof}}
\def\tensor{{\otimes}}
\def\meet{\cap}
\def\union{\cup}
\def\g{\gamma}
\def\G{\Gamma}
\def\<{\begin}
 \def\>{\end}
 \def\tcb{}
\let\phi\varphi
\def\Spec{\operatorname{Spec}}
\def\wt{\operatorname{w}}
\newcommand{\nc}{\newcommand}
\nc{\renc}{\renewcommand}
\nc{\ssec}{\subsection}
\nc{\sssec}{\subsubsection}
\nc{\on}{\operatorname}
\nc\ol{\overline}
\nc\wh{\widehat}
\renc{\d}{{\delta}}
\nc{\Aa}{{\mathbb{A}}}
 \nc{\Gg}{{\mathbb{G}}}  
\nc{\Hh}{{\mathbb{H}}}
 \nc{\Nn}{{\mathbb{N}}}
\nc{\Pp}{{\mathbb{P}}}
\nc{\Rr}{{\mathbb{R}}}
\nc{\BV}{{\mathbb{V}}}
\nc{\BW}{{\mathbb{W}}}
\nc{\Zz}{{\mathbb{Z}}}
\nc{\Qq}{{\mathbb{Q}}}
\nc{\Ss}{{\mathbb{S}}}
\nc{\Cc}{{\mathbb{C}}}
\nc{\Ff}{{\mathbb{F}}}
\nc{\CA}{{\mathcal{A}}}
\nc{\CB}{{\mathcal{B}}}
\nc{\CE}{{\mathcal{E}}}
\nc{\CF}{{\mathcal{F}}}
\nc{\CG}{{\mathcal{G}}}
\nc{\CL}{{\mathcal{L}}}
\nc{\CC}{{\mathcal{C}}}
\nc{\CM}{{\mathcal{M}}}
\def\Mm{\CM}
\nc{\CN}{{\mathcal{N}}}
\nc{\Oo}{{\mathcal{O}}}
\nc{\CP}{{\mathcal{P}}}
\nc{\CQ}{{\mathcal{Q}}}
\nc{\CR}{{\mathcal{R}}}
\nc{\CS}{{\mathcal{S}}}
\nc{\CT}{{\mathcal{T}}}
\nc{\CU}{{\mathcal{U}}}
\nc{\CV}{{\mathcal{V}}}
\nc{\CK}{{\mathcal{K}}}
\nc{\CW}{{\mathcal{W}}}
\nc{\CZ}{{\mathcal{Z}}}
\nc{\cM}{{\check{\mathcal M}}{}}
\nc{\csM}{{\check{\mathcal A}}{}}
\nc{\oM}{{\overset{\circ}{\mathcal M}}{}}
\nc{\obM}{{\overset{\circ}{\mathbf M}}{}}
\nc{\oCA}{{\overset{\circ}{\mathcal A}}{}}
\nc{\obA}{{\overset{\circ}{\mathbf A}}{}}
\nc{\ooM}{{\overset{\circ}{M}}{}}
\nc{\osM}{{\overset{\circ}{\mathsf M}}{}}
\nc{\vM}{{\overset{\bullet}{\mathcal M}}{}}
\nc{\nM}{{\underset{\bullet}{\mathcal M}}{}}
\nc{\oD}{{\overset{\circ}{\mathcal D}}{}}
\nc{\obD}{{\overset{\circ}{\mathbf D}}{}}
\nc{\oA}{{\overset{\circ}{\mathbb A}}{}}
\nc{\op}{{\overset{\bullet}{\mathbf p}}{}}
\nc{\cp}{{\overset{\circ}{\mathbf p}}{}}
\nc{\oU}{{\overset{\bullet}{\mathcal U}}{}}
\nc{\oZ}{{\overset{\circ}{\mathcal Z}}{}}
\nc{\ofZ}{{\overset{\circ}{\mathfrak Z}}{}}
\nc{\oF}{{\overset{\circ}{\fF}}}
\nc{\fa}{{\mathfrak{a}}}
\nc{\fb}{{\mathfrak{b}}}
\nc{\fgl}{{\mathfrak{gl}}}
\nc{\fh}{{\mathfrak{h}}}
\nc{\fj}{{\mathfrak{j}}}
\nc{\fm}{{\mathfrak{m}}}
\nc{\fn}{{\mathfrak{n}}}
\nc{\fu}{{\mathfrak{u}}}
\nc{\fp}{{\mathfrak{p}}}
\nc{\fr}{{\mathfrak{r}}}
\nc{\fs}{{\mathfrak{s}}}
\nc{\fsl}{{\mathfrak{sl}}}
\nc{\hsl}{{\widehat{\mathfrak{sl}}}}
\nc{\hgl}{{\widehat{\mathfrak{gl}}}}
\nc{\hg}{{\widehat{\mathfrak{g}}}}
\nc{\chg}{{\widehat{\mathfrak{g}}}{}^\vee}
\nc{\hn}{{\widehat{\mathfrak{n}}}}
\nc{\chn}{{\widehat{\mathfrak{n}}}{}^\vee}
\nc{\fA}{{\mathfrak{A}}}
\nc{\fB}{{\mathfrak{B}}}
\nc{\fD}{{\mathfrak{D}}}
\nc{\fE}{{\mathfrak{E}}}
\nc{\fF}{{\mathfrak{F}}}
\nc{\fG}{{\mathfrak{G}}}
\nc{\fK}{{\mathfrak{K}}}
\nc{\fL}{{\mathfrak{L}}}
\nc{\fM}{{\mathfrak{M}}}
\nc{\fN}{{\mathfrak{N}}}
\nc{\fP}{{\mathfrak{P}}}
\nc{\fU}{{\mathfrak{U}}}
\nc{\fV}{{\mathfrak{V}}}
\nc{\fZ}{{\mathfrak{Z}}}
\nc{\bb}{{\mathbf{b}}}
\nc{\bc}{{\mathbf{c}}}
\nc{\bd}{{\mathbf{d}}}
\nc{\be}{{\mathbf{e}}}
\nc{\bj}{{\mathbf{j}}}
\nc{\bn}{{\mathbf{n}}}
\nc{\bp}{{\mathbf{p}}}
\nc{\bq}{{\mathbf{q}}}
\nc{\bF}{{\mathbf{F}}}
\nc{\bu}{{\mathbf{u}}}
\nc{\bv}{{\mathbf{v}}}
\nc{\bx}{{\mathbf{x}}}
\nc{\bs}{{\mathbf{s}}}
\nc{\by}{{\mathbf{y}}}
\nc{\bw}{{\mathbf{w}}}
\nc{\bA}{{\mathbf{A}}}
\nc{\bK}{{\mathbf{K}}}
\nc{\bI}{{\mathbf{I}}}
\nc{\bB}{{\mathbf{B}}}
\nc{\bG}{{\mathbf{G}}}
\nc{\bC}{{\mathbf{C}}}
\nc{\bD}{{\mathbf{D}}}
\nc{\bP}{{\mathbf{P}}}
\nc{\bH}{{\mathbf{H}}}
\nc{\bM}{{\mathbf{M}}}
\nc{\bN}{{\mathbf{N}}}
\nc{\bV}{{\mathbf{V}}}
\nc{\bU}{{\mathbf{U}}}
\nc{\bL}{{\mathbf{L}}}
\nc{\bT}{{\mathbf{T}}}
\nc{\bW}{{\mathbf{W}}}
\nc{\bX}{{\mathbf{X}}}
\nc{\bY}{{\mathbf{Y}}}
\nc{\bZ}{{\mathbf{Z}}}
\nc{\bS}{{\mathbf{S}}}
\nc{\sA}{{\mathsf{A}}}
\nc{\sB}{{\mathsf{B}}}
\nc{\sC}{{\mathsf{C}}}
\nc{\sD}{{\mathsf{D}}}
\nc{\sF}{{\mathsf{F}}}
\nc{\sG}{{\mathsf{G}}}
\nc{\sK}{{\mathsf{K}}}
\nc{\sM}{{\mathsf{M}}}
\nc{\sO}{{\mathsf{O}}}
\nc{\sQ}{{\mathsf{Q}}}
\nc{\sP}{{\mathsf{P}}}
\nc{\sZ}{{\mathsf{Z}}}
\nc{\sfp}{{\mathsf{p}}}
\nc{\sr}{{\mathsf{r}}}
\nc{\sg}{{\mathsf{g}}}
\nc{\sff}{{\mathsf{f}}}
\nc{\sfb}{{\mathsf{b}}}
\nc{\sfc}{{\mathsf{c}}}
\nc{\sd}{{\ltimes}}
\nc{\tA}{{\widetilde{\mathbf{A}}}}
\nc{\tB}{{\widetilde{\mathcal{B}}}}
\nc{\tG}{{\widetilde{G}}}
\nc{\TM}{{\widetilde{\mathbb{M}}}{}}
\nc{\tO}{{\widetilde{\mathsf{O}}}{}}
\nc{\tU}{\widetilde{U}}
\nc{\TZ}{{\tilde{Z}}}
\nc{\tx}{{\tilde{x}}}
\nc{\tq}{{\tilde{q}}}
\nc{\val}{{\rm val}}
\def\RV{\mathrm{RV}}
 \def\RES{{\mathrm{RES}}}
  \def\Var{{\mathrm{Var}}}
  \def\ACVF{{\mathrm {ACVF}}}
\def\VF{\mathrm{VF}}
\def\kk{\mathbf{k}}
\def\dcl{{\mathrm {dcl}}}
  \def\mC{\mathcal{C}}
\def\inv{^{-1}}
\def\rv{{\rm rv}}
\def\sp{{\rm sp}}
\def\tr{{\rm tr}}
\def\liminv{\underset{\longleftarrow}{\rm{lim}\,} }
\def\eu{\operatorname{eu}}
\def\EU{\operatorname{EU}}
\def\etEU{\EU_{\text{\'et}}}
\def\GEU{\EU_{\Gamma}}
\def\llp{\mathopen{(\!(}}
\def\llb{\mathopen{[\![}}
\def\rrp{\mathopen{)\!)}}
\def\rrb{\mathopen{]\!]}}  
\def\01{\left(0,1\right]}  
\numberwithin{equation}{subsection}
\begin{document}

\title{Monodromy and the Lefschetz fixed point formula}
\author{Ehud Hrushovski}

\address{Department of Mathematics, The Hebrew University, Jerusalem, Israel}
 \email{ehud@math.huji.ac.il}
\author{Fran\c cois Loeser}
\address{Institut de Math\'ematiques de Jussieu,
UMR 7586 du CNRS,
Universit\'e Pierre et Marie Curie,
Paris, France}
\email{Francois.Loeser@upmc.fr}
%


\dedicatory{To Jan  Denef as a token of admiration and friendship
\\on the occasion of his 60th birthday}

\subjclass{}
\maketitle

\section{Introduction}
\subsection{}Let $X$ be a smooth complex algebraic variety of dimension $d$
and let $f \colon X \rightarrow \mathbb{A}^1_{\mathbb{C}}$ be a non-constant morphism to the affine line.
Let $x$ be a singular point of  $f^{-1} ( 0)$,
that is, such that  $df (x) = 0$.

Fix a distance function $\delta$ on an open neighborhood  of $x$ induced from a local embedding of this neighborhood in some complex affine space.
For $\varepsilon >0$ small enough, one may consider the corresponding closed ball $B (x, \varepsilon)$ of radius $\varepsilon$ around $x$.
For $\eta >0$ we  denote by $D_{\eta}$ the closed disk of radius $\eta$ around the origin in $\mathbb{C}$.

By Milnor's local fibration Theorem (see \cite{Milnor}, \cite{Dimca}),
there exists $\varepsilon_0 >0$ such that, for every $0 < \varepsilon < \varepsilon_0$,
there exists $0 < \eta <\varepsilon$ such that
the morphism
$f$ restricts to  a fibration, called the Milnor fibration,
\begin{equation}
B (x, \varepsilon) \cap f^{-1}(D_{\eta} \setminus \{0\})
\longrightarrow
D_{\eta} \setminus \{0\}.
\end{equation}

The Milnor fiber at $x$,
\begin{equation}\label{mfdef}
F_x  = f^{-1} (\eta) \cap B (x, \varepsilon),
\end{equation}
 has a diffeomorphism type that does not depend on  $\delta$,
$\eta$ and $\varepsilon$. 
The characteristic mapping of the fibration
induces on $F_x$
an automorphism
which is defined up to homotopy,
the monodromy $M_x$.
In particular  the cohomology groups
$H^i (F_x, \Qq)$ are endowed with
an automorphism $M_x$, and for any integer $m$
one can consider the Lefschetz numbers
\begin{equation}
\Lambda (M_x^m) =  \tr (M_x^m ; H^{\bullet}(F_x, \Qq))
= \sum_{i \geq 0} (-1)^i\tr (M_x^m ; H^{i}(F_x, \Qq)).
\end{equation}

In  \cite{ACampo1}, A'Campo proved that if $x$ is a singular point of $f^{-1} (0)$, then
$\Lambda (M_x^1) = 0$ and this was later generalized by Deligne to the statement that
$\Lambda (M_x^m) = 0$ for $0 < m < \mu$, with $\mu$ the multiplicity of $f$ at $x$, cf. \cite{ACampo2}.

In \cite{DLtop}, Denef and  Loeser proved that $\Lambda (M_x^m)$ can be  expressed in terms of Euler characteristics of arc spaces as follows.
For any integer $m \geq 0$, let  $\mathcal{L}_m (X)$ denote the space of arcs modulo $t^{m+1}$ on $X$:
a $\mathbb{C}$-rational point of $\mathcal{L}_m (X)$ corresponds to a $\mathbb{C} [t] / t^{m + 1}$-rational point of X, cf. \cite{DLinv}.
Consider the locally closed subset  $\mathcal{X}_{m, x}$ of $\mathcal{L}_m (X)$ 
\begin{equation}\label{1.1.4}
\mathcal{X}_{m, x} =
\{\varphi \in \mathcal{L}_m (X); f (\varphi) = t^m \mod t^{m + 1}, \varphi (0) = x\}.
\end{equation}

Note that $\mathcal{X}_{m, x} $ can be viewed in a natural way as the set of closed points of a complex algebraic variety.

\begin{theo}[\cite{DLtop}]\label{mt}
For every $m \geq 1$,
\begin{equation}\label{eqmt}
\chi_c (\mathcal{X}_{m, x}) =
\Lambda (M_x^m).
\end{equation}
\end{theo}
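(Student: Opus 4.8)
The plan is to evaluate both sides of \eqref{eqmt} on an embedded resolution of $f^{-1}(0)$ and to match them. Fix a proper morphism $h\colon Y\to X$ with $Y$ smooth, which is an isomorphism over $X\setminus f^{-1}(0)$ and such that $h^{-1}(f^{-1}(0))$ is a simple normal crossings divisor; write $h^{-1}(f^{-1}(0))=\sum_{i\in J}N_iE_i$ with $E_i$ its irreducible components and $N_i=\operatorname{ord}_{E_i}(f\circ h)$, and let the relative canonical divisor be $K_{Y/X}=\sum_{i\in J}(\nu_i-1)E_i$. For $\emptyset\neq I\subseteq J$ put $E_I=\bigcap_{i\in I}E_i$ and $E_I^{\circ}=E_I\setminus\bigcup_{j\notin I}E_j$; since $h$ is proper, $h^{-1}(x)$ is projective and each $E_i^{\circ}\cap h^{-1}(x)$ is a complex variety, on which $\chi$ and $\chi_c$ coincide. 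On the monodromy side, A'Campo's computation \cite{ACampo2} of the zeta function of the local monodromy of $f$ at $x$ in terms of the resolution gives, upon taking the logarithmic derivative, the identity
\[
\Lambda(M_x^m)=\sum_{\substack{i\in J\\ N_i\mid m}}N_i\,\chi_c\bigl(E_i^{\circ}\cap h^{-1}(x)\bigr),\qquad m\geq 1,
\]
which I would take as an external input.

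On the arc side, consider the local motivic zeta function $Z_{f,x}(T)=\sum_{m\geq 1}[\mathcal{X}_{m,x}]\,\Lef^{-md}\,T^m$, a power series over the Grothendieck ring of complex varieties, with $\Lef=[\mathbb{A}^1]$. The key input is the Denef--Loeser closed form \cite{DLtop},
\[
Z_{f,x}(T)=\sum_{\emptyset\neq I\subseteq J}(\Lef-1)^{|I|-1}\,\bigl[\widetilde{E_I^{\circ}}\cap h^{-1}(x)\bigr]\prod_{i\in I}\frac{\Lef^{-\nu_i}T^{N_i}}{1-\Lef^{-\nu_i}T^{N_i}},
\]
where $\widetilde{E_I^{\circ}}\to E_I^{\circ}$ is the cover obtained by extracting a $\gcd(N_i:i\in I)$-th root of the unit $u$ in a local equation $f\circ h=u\prod_{i\in I}y_i^{N_i}$; for $|I|=1$ it is finite \'etale of degree $N_i$, so $\chi_c(\widetilde{E_i^{\circ}}\cap h^{-1}(x))=N_i\,\chi_c(E_i^{\circ}\cap h^{-1}(x))$. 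Now apply the Euler characteristic $\chi_c$, a ring homomorphism sending $\Lef$ to $1$ and hence $\Lef-1$ to $0$: every term with $|I|\geq 2$ vanishes, the term $I=\{i\}$ contributes $N_i\,\chi_c(E_i^{\circ}\cap h^{-1}(x))\sum_{k\geq 1}T^{kN_i}$, and comparing coefficients of $T^m$ yields $\chi_c(\mathcal{X}_{m,x})=\sum_{N_i\mid m}N_i\,\chi_c(E_i^{\circ}\cap h^{-1}(x))$, which is exactly the right-hand side of the A'Campo formula. This proves \eqref{eqmt}.

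The monodromy side is classical, so I expect the main obstacle to be the Denef--Loeser resolution formula for $Z_{f,x}$. Its proof proceeds by lifting arcs on $X$ through $h$, stratifying the arc space of $X$ by the contact orders $\operatorname{ord}_{E_i}$ of the lifted arc with the exceptional components, and applying the change of variables formula in arc spaces \cite{DLinv} --- the factor $\Lef^{-\nu_i}$ being precisely the Jacobian correction of $h$ along $E_i$, and the cover $\widetilde{E_I^{\circ}}$ encoding the condition that the angular component of $f\circ h$ is equal to $1$. One cannot circumvent this by working with Euler characteristics from the start, since the factors $(\Lef-1)^{|I|-1}$ that are eventually killed are genuinely present at the level of varieties. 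An alternative route, closer to the viewpoint of the present paper, is to identify $\mathcal{X}_{m,x}$ with a space of truncated arcs supported at $x$ on the pull-back of $f$ along the $m$-th power map $\mathbb{A}^1\to\mathbb{A}^1$ --- that is, on the non-archimedean Milnor fiber --- and to deduce \eqref{eqmt} from a Lefschetz fixed point formula realizing $\Lambda(M_x^m)$ as the Euler characteristic of the fixed locus of the $m$-th power of the monodromy.
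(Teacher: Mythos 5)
Your argument is mathematically correct, but it is precisely the original proof of Denef and Loeser from \cite{DLtop} that the present paper sets out to replace. The introduction states explicitly that the original proof computes both sides of \eqref{eqmt} on an embedded resolution of $f^{-1}(0)$, via the change-of-variables formula for the left-hand side and A'Campo's formula for the right-hand side, and that the goal here is a geometric proof \emph{not} using resolution of singularities, a problem raised in \cite{seattle}. Your main argument --- resolution, the closed form of $Z_{f,x}(T)$ on the resolution, A'Campo's formula, and the cancellation of the factors $(\Lef-1)^{|I|-1}$ under $\chi_c$ --- is therefore exactly the route the paper is designed to avoid.

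The paper's actual route is the one you gesture at in your final sentence, and it differs substantially. Its key inputs are: (i) the Nicaise--Sebag comparison identifying $H^i(F_x,\Qq)\otimes\Qq_\ell$ with the \'etale cohomology of the Berkovich analytification of the analytic Milnor fiber, under which $M_x$ matches the topological generator $\varphi$ of $\hat\mu=\mathrm{Gal}(\mathbb{C}\llp t\rrp^{\rm alg}/\mathbb{C}\llp t\rrp)$; (ii) after Poincar\'e duality, a ring morphism $\etEU\colon K(\VF)\to K(\hat\mu\mathrm{-Mod})$ built from \'etale cohomology with compact supports of semi-algebraic subsets of Berkovich spaces; (iii) the Hrushovski--Kazhdan morphism $\GEU\colon K(\VF)\to\mathord{!K}(\RES)/([\mathbb{A}^1]-1)$, whose target encodes equivariant classes of finite-dimensional varieties over the residue field; (iv) the compatibility of Theorem~\ref{theocompat}, $\eu_{\text{\'et}}\circ\GEU=\etEU$, proved via standard-form reductions and vanishing cycles on \emph{smooth} formal schemes rather than via any resolution of $f^{-1}(0)$; and (v) the elementary Lefschetz fixed point formula for a finite-order automorphism of a complex quasi-projective variety (Proposition~\ref{lfp}). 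Chaining these gives $\Lambda(M_x^m)=\tr(\varphi^m;\etEU([X_{t,x}]))=\chi_c(\Theta_0(\EU_{\Gamma,m}([X_{t,x}])))$; then Proposition~\ref{3} and Corollary~\ref{4} identify $\EU_{\Gamma,m}([X_{t,x}])$ with the class $\mathcal{X}_x[m]$ obtained by intersecting with the $\hat\mu^m$-fixed sorts, and this class coincides with that of $\mathcal{X}_{m,x}$ modulo $[\mathbb{A}^1]-1$. What this buys, compared to your route, is resolution-independence: the whole reduction to a finite-dimensional fixed-point problem happens through valued-field and residue-field Grothendieck rings, with no embedded resolution of $f=0$ ever invoked --- which is exactly the point of the paper.
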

Here $\chi_c$ denotes the usual Euler characteristic with compact supports.
Note that one recovers Deligne's statement as a corollary since $\mathcal{X}_{m, x} $ is empty for
$0 < m < \mu$.
The original proof in \cite{DLtop} proceeds as follows. One computes explicitly both sides of
(\ref{eqmt}) on an embedded resolution of the hypersurface defined by $f = 0$ and checks both quantities are equal.
The computation of the left-hand side  relies on the change of variable formula for motivic integration in
\cite{DLinv} and the one on
the right-hand side on A'Campo's formula in \cite{ACampo2}.
The problem of
finding a geometric proof of Theorem \ref{mt} not using resolution of
singularities is raised in \cite{seattle}.
The aim of this paper is to present such a proof.

\subsection{}Our approach uses \'etale cohomology of non-archimedean spaces and
motivic integration. Nicaise and Sebag introduced in
\cite{NS} the analytic Milnor fiber  $\mathcal{F}_x$ of the function $f$ at a point $x$ which is a rigid analytic space over $\mathbb{C} \llp t \rrp$.
Let
$\mathcal{F}_x^{an}$ denote its analytification in the sense of Berkovich.
Using a comparison theorem of Berkovich, they show that, for every $i \geq 0$, the  \'etale $\ell$-adic cohomology group
$H^i (\mathcal{F}_x^{an}  \widehat{\otimes} \, \widehat{\mathbb{C} \llp t \rrp^{\rm alg}}, \mathbb{Q}_{\ell} )$
is  isomorphic 
to $H^i (F_x, \mathbb{Q}) \otimes_{\Qq} \mathbb{Q}_{\ell}$.
Furthermore, these  \'etale $\ell$-adic cohomology groups are naturally endowed with an action of
the Galois group $\mathrm{Gal} (\mathbb{C} \llp t \rrp^{\rm alg} / \mathbb{C} \llp t \rrp)$
of the algebraic closure of $\mathbb{C} \llp t \rrp$, and
under this isomorphism
the action of the 
topological generator $(t^{1/n} \mapsto \exp (2 i \pi /n)t^{1/n})_{n \geq 1}$
of $\hat \mu (\mathbb{C}) = \mathrm{Gal} (\mathbb{C} \llp t \rrp^{\rm alg} / \mathbb{C} \llp t \rrp)$  corresponds to the monodromy $M_x$.

Another fundamental tool in our approach is provided by the theory of motivic integration developed in  \cite{HK} by Hrushovski and Kazhdan.
Their logical setting is that of 
 the theory
$\ACVF (0,0)$
of algebraically closed valued fields of equal characteristic zero,
with two sorts $\VF$ and $\RV$.
If $L$ is a field endowed with a valuation $v: L \to \G (L)$, with valuation ring $\Oo_L$ and maximal ideal $\Mm_L$, 
$\VF(L) = L$ and
$\RV(L)  = L^{\times} / (1 + \Mm_L)$.
Thus $\RV(L)$ can be inserted  in an exact sequence
\begin{equation}\label{rvext}
1 \to\kk^{\times}  (L) \to \RV \to \G (L)  \to 0
\end{equation}
with $\kk (L)$ the residue field of $L$.  Let us work with $\mathbb{C} \llp t \rrp$ as a base field.
One of the main result of  \cite{HK} is the construction of an isomorphism
\begin{equation}\label{keyseq}
\oint \colon K (\VF) \longrightarrow K (\RV [*]) / I_{\sp}
\end{equation}
between the Grothendieck ring $K (\VF)$ of definable sets in the $\VF$-sort 
and the quotient of  a graded version  $K (\RV [*])$ of  the Grothendieck ring of definable sets in the $\RV$-sort 
by an explicit ideal $I_{\sp}$.
At the Grothendieck rings level, the extension (\ref{rvext}) is reflected by the fact that
 $K (\RV [*])$ may be expressed as a tensor product 
 of the graded Grothendieck  rings $K (\G [*])$ and $K (\RES [*])$ for a certain sort $\RES$.
 A precise definition of $\RES$ will be given in \ref{ss2.2}, but let us say that variables  in the $\RES$ sort 
 range not only over the residue field but also over certain torsors over the residue field so that definable sets in 
the $\RES$ sort are twisted versions of constructible sets over the residue field.
This reflects the fact that the extension (\ref{rvext}) has no canonical splitting. 
Furthermore, there is a canonical isomorphism between  a quotient $\mathord{!K}(\RES)$  of the Grothendieck ring 
$K(\RES)$ and $K^{\hat \mu} (\Var_{\mathbb{C}})$,
the Grothendieck ring of complex algebraic varieties with $\hat \mu$-action, as considered in \cite{barc} and \cite{seattle}.
Let
 $[\mathbb{A}^1]$ denote the class of the affine line.
In \cite{HK}
a canonical morphism
\begin{equation}
\GEU \colon  K (\VF)  \longrightarrow \mathord{!K}(\RES) / ( [\mathbb{A}^1] - 1).
\end{equation}
is constructed. We shall make essential use of that construction, which is
 recalled in detail  in  \ref{2.5}. It roughly corresponds to
applying the o-minimal Euler characteristic to the $\Gamma$-part of the product decomposition 
of the right-hand of (\ref{keyseq}). 
Denote by  $K (\hat \mu\mathrm{-Mod})$ the 
 Grothendieck ring of the category of finite dimensional $\Qq_{\ell}$-vector spaces with  $ \hat{\mu} $-action. 
There is a canonical morphism
$K^{\hat \mu} (\Var_{\mathbb{C}}) \to K (\hat \mu\mathrm{-Mod})$ induced by taking the alternating sum of cohomology with compact supports
from which one derives a morphism
\begin{equation}\eu_{\text{\'et}} \colon \mathord{!K}(\RES) / ( [\mathbb{A}^1] - 1) \longrightarrow K (\hat \mu\mathrm{-Mod}).
\end{equation}

Our strategy is the following. Instead of trying to prove directly a Lefschetz fixed point formula for objects of
$\VF$, that are infinite dimensional in nature when considered as objects over $\mathbb{C}$, we take advantage of the morphism
$\GEU$ for reducing to finite dimensional spaces.
To this aim,
using \'etale cohomology of Berkovich spaces, developed by Berkovich in \cite{berket},
we construct a natural ring morphism
\begin{equation}
\etEU \colon
 K (\VF) \longrightarrow K (\hat \mu\mathrm{-Mod})
 \end{equation}
and we prove a  key result, Theorem
 \ref{theocompat}, 
which states that
the diagram
\begin{equation}\xymatrix{
K (\VF) \ar[rr]^{  \GEU} \ar[dr]_{ \etEU}& &\mathord{\mathord{!K}}(\RES) / ( [\mathbb{A}^1] - 1) \ar[dl]^{\eu_{\text{\'et}}}\\
&K (\hat \mu\mathrm{-Mod})& }
\end{equation}
is commutative.
Using this result, we are able to reduce the proof of Theorem \ref{mt}
to a classical statement, the Lefschetz fixed point
theorem
for finite order automorphisms acting on
complex algebraic varieties (Proposition \ref{lfp}).

Since our approach makes no use of resolution of singularities, 
it would be  tempting to try extending  it   to situations in positive residue characteristic. In order to do that, a necessary prerequisite would be to find the right extension of
the results of \cite{HK} beyond equicharacteristic $0$.

\subsection{}Using the same circle of ideas, we also obtain several new results and constructions  dealing with
the motivic Serre invariant and the motivic Milnor fiber.

More precisely,
in section \ref{secserre}, we  explain the connexion
between the morphism $\GEU$ and the motivic Serre invariant of
\cite{LS}.  We show in Proposition \ref{compserre}
that if $X$ is a smooth proper algebraic variety over
$F \llp t \rrp$ with $F$ a field of characteristic zero, with base change
$X (m)$ over $F \llp t^{m^{-1}}\rrp$,
then the motivic Serre invariant $S (X (m))$
can be expressed in terms of the part of $\GEU (X)$ fixed by
the $m$-th power of a topological generator of $\hat \mu$.
This allows in particular to provide a proof of a fixed point theorem originally proved by Nicaise and Sebag in  \cite{NS}
that circumvents the use
of resolution of singularities.

In section \ref{recov} we show how  one  can recover the motivic zeta function
and the motivic Milnor fiber of \cite{motivic} and  \cite{barc}, after inverting the elements $1 - [\mathbb{A}^1]^i$, $i \geq 1$,  from a single class
in the measured Grothendieck  ring of definable objects over $\VF$,
namely the class
of the set $\mathcal{X}_x$ of points $y$ in $X (\mathbb{C} \llb t \rrb)$
such that
$\rv f(y) = \rv(t)$ and $y(0) = x$.
This provides a new construction of the motivic Milnor fiber that  seems quite useful.
It  has already been used by L\^e Quy Thuong \cite{thuong} to prove an
integral identity conjectured by Kontsevich and Soibelman in their work on 
motivic
Donaldson-Thomas invariants \cite{KS}.

 \bigskip

We are grateful to Antoine Chambert-Loir, Georges Comte, Antoine Ducros, Johannes Nicaise, Michel Raibaut and Yimu Yin for very useful comments and exchanges.

\medskip{During the preparation of this paper, the research of the authors has been partially supported by the European Research Council under the European Union's Seventh Framework Programme (FP7/2007-2013) /  ERC Grant  agreement no. 291111 and ERC Grant agreement no. 246903/NMNAG.

\tableofcontents

\section{Preliminaries on Grothendieck rings of definable sets, after \cite{HK}} 
\subsection{}We shall consider the theory
$\ACVF (0,0)$
of algebraically closed valued fields of equal characteristic zero,
with two sorts $\VF$ and $\RV$. 
This will be  more suitable here than the more classical signature with three sorts $(\VF, \G, \kk)$. 
The language on $\VF$ is the ring language, and
the language on $\RV$ consists of abelian group operations $\cdot$ and $(\cdot)^{-1}$,
a unary predicate $\kk^{\times}$ for a subgroup,
an operation
$+ \colon \kk^2 \to \kk$, where $\kk$ is $\kk^{\times}$ augmented by a symbol zero,
and a function symbol $\rv$ for a function $ \VF^{\times} \to \RV$.
Here, $ \VF^{\times}$ stands for  $\VF \setminus \{0\}$. 

Let $L$ be a valued field, with valuation ring $\Oo_L$
and maximal ideal $\Mm_L$.
We set
$\VF(L) = L$,
$\RV(L)  = L^{\times} / (1 + \Mm_L)$,
$\Gamma (L) = L^{\times} / \Oo_L^{\times}$
and $\kk (L) = \Oo_L / \Mm_L$.
We have an exact sequence
\begin{equation}
1 \to\kk^{\times} \to \RV \to \G \to 0,
\end{equation}
where we view $\G$ as an imaginary sort. 
We denote by
$\rv \colon \VF^{\times} \to \RV$, $\val \colon \VF^{\times} \to \G$ and
$\val_{\rv} \colon \RV \to \G$ the natural maps.

\subsection{}\label{ss2.2}Fix a base structure $L_0$ which is a nontrivially valued field.
We shall view $L_0$-definable sets as functors from the category of  valued field extensions of $L_0$  with no morphisms except the identity to the category of sets.
For each $\gamma \in \mathbb{Q} \otimes \G (L_0)$,
we consider the definable set $V_{\gamma}$ 
\begin{equation}
L \longmapsto V_{\gamma} (L) = \{0\} \cup \{x \in L ;  \val (x) = \gamma\} /  ( 1 + \Mm_L)
\end{equation}
on valued field extensions $L$ of $L_0$.
Note that
when $\gamma - \gamma' \in \G (L_0)$, $V_{\gamma} (L)$ and $V_{\gamma'} (L)$ are definably isomorphic.
For  $\bar \gamma = (\gamma_1, \ldots, \gamma_n) \in
(\mathbb{Q} \otimes \G (L_0))^n$
we set
$V_{\bar \gamma} = \prod_i V_{\gamma_i}$.
By a $\bar \gamma$-weighted monomial, we mean
an expression $a_{\nu} X^{\nu} = a_{\nu} \prod_i X_i^{\nu_i}$
with $\nu = (\nu_1, \ldots, \nu_n) \in \mathbb{N}^n$ a multi-index,
such that
$a_\nu$ is an $L_0$-definable element of $\RV$ with
$\val_{\rv} (a_{\nu}) + \sum_i \nu_i \gamma_i = 0$.
A $\bar \gamma$-polynomial is a finite sum of
$\bar \gamma$-weighted monomials. Such a $\bar \gamma$-polynomial $H$
gives rise to a function $H \colon V_{\bar \gamma} \to \kk$ so we can consider
its zero set $Z (H)$. The intersection of finitely many  such sets is called a generalized algebraic variety over the residue field.
The generalized residue structure
$\RES$
consists of the residue field, together with the collection
of
the definable sets $V_{\gamma}$, for
$\gamma  \in
\mathbb{Q} \otimes \G (L_0)$,
and
the functions $H \colon V_{\bar \gamma} \to \kk$ associated to each
$\bar \gamma$-polynomial.

\subsection{}\label{azer}If $S$ is a sort, we write  $S^*$ to mean $S^m$, for some $m$.
We shall view varieties over $L_0$ as definable sets over $L_0$. We
denote by $\VF [n]$ the category of definable subsets of $n$-dimensional varieties over $L_0$.
By Lemma 8.1 of \cite{HK} this category is equivalent to the category whose objects are
the definable subsets $X$ of $\VF^* \times \RV^*$  such that there exists a definable
map $X \to \VF^n$ with finite fibers. By abuse of notation we shall sometimes also denote by $\VF [n]$ that category.

We denote by $\RV [n]$ the category of definable pairs
$(X, f)$ with $X \subset \RV^*$ and $f \colon X \to \RV^n$ a  definable map with finite fibers
and by
$\RES [n]$ the full subcategory consisting of  objects with $X$ such that
$\val_{\rv} (X)$ is finite (which is equivalent to the  condition that $X$ is isomorphic to a definable subset of $\RES^*$).
By Remark 3.67 of \cite{HK}, the forgetful map $(X, f) \mapsto X$ induces an equivalence of categories between $\RV[n]$ and the
category of all definable subsets of $\RV^*$  of $\RV$-dimension $\leq n$, that is, such that there
exists a definable map with finite fibers to $\RV^n$.
Nonetheless, the morphism $f$ will be useful for defining $ \mathbb{L}$ in \ref{2.4}.

Let $A = \G (L_0)$ or more generally any ordered abelian group.  One defines $\G [n]$
as the category whose objects are
 finite disjoint union of subsets of $\G^n$ defined by linear
equalities and inequalities with $\mathbb{Z}$-coefficients and parameters in $A$. Given objects $X$ and $Y$  in
 $\G [n]$, a morphism $f$ between $X$ and $Y$ is a bijection such that there exists a finite partition  of $X$
 into objects $X_i$ of
 $\G [n]$, such that the restriction of $f$ to $X_i$ is of the form
 $x \mapsto M_i x + a_i$
 with $M_i \in \mathrm{GL}_n (\mathbb{Z})$ and  $a_i \in A^n$.
 We define 
$\G^{\rm{fin}} [n]$ as the full subcategory of 
$\G [n]$ consisting of finite sets.

We shall consider the categories
\begin{equation}\RV[\leq
n] = \bigoplus_{0 \leq k \leq n} \RV [k],\end{equation}
\begin{equation}\RV [*] = \bigoplus_{ n \geq 0} \RV [n],\end{equation} 
\begin{equation}\RES [*] = \bigoplus_{ n \geq 0} \RES [n],\end{equation}
\begin{equation}
 \G [*] = \bigoplus_{n \geq 0} \G [n]\end{equation}
 and
\begin{equation}
 \G^{\rm{fin}} [*] = \bigoplus_{n \geq 0} \G^{\rm{fin}} [n].\end{equation}

Let $C$ be any of the symbols  $\RV$, $\RES$, $\G$  and $\G^{\rm{fin}}$.
We shall denote by $K_+ (C [n])$, $K_+ (C [*])$, resp. $K (C[n])$, $K (C [*])$,
the Grothendieck monoid, resp. the Grothendieck group, of the corresponding categories as defined in \cite{HK}.
The Grothendieck monoid
$K_+ (C[*])$ decomposes as a direct sum
 $K_+ (C[*])= \oplus_{0 \leq  n} K_+ (C[n])$ and admits a natural structure
 of graded semi-ring with $K_+ (C[n])$ as degree $n$ part.
 Similarly, $K(C[*])$  admits a natural structure
 of graded ring with $K (C[n])$ as degree $n$ part.
The Grothendieck monoid $K_+ (\RV [n])$ is isomorphic to the Grothendieck monoid of definable
subsets $X$ of $\RV^*$ of $\RV$-dimension $\leq n$.

One also considers $K_+ (\VF)$, resp.
$K (\VF)$,
the Grothendieck semi-ring, resp. the Grothendieck ring, of the category
of definable subsets of $L_0$-varieties of any dimension. The product is induced by cartesian product
and $K_+ (\VF)$ and $K (\VF)$ are filtered by dimension.
By Lemma 8.1 of \cite{HK}, $K_+ (\VF)$, resp.
$K (\VF)$,  can be identified with the Grothendieck semi-ring, resp. the Grothendieck ring, of the category
of definable subsets $X$ of $\VF^* \times \RV^*$  such that there exists, for some $n$, a definable
map $X \to \VF^n$ with finite fibers.
Similarly, one denotes by $K_+ (\RV)$,
$K (\RV)$, $K_+ (\RES)$, and $K(\RES)$, the Grothendieck semi-rings and  rings of the categories of definable subsets of $\RV^*$ and
$\RES^*$, respectively.

The mapping
$X \mapsto \val_{\rv}^{-1} (X)$
induces a functor $\G[n] \to \RV [n]$,
hence a morphism
$K_+ (\G [n]) \to
K_+ (\RV [n])$
which restricts to a morphism
$K_+ (\G^{\rm{fin}} [n]) \to
K_+ (\RES [n])$.
We also have a morphism $K_+ (\RES [n]) \to
K_+ (\RV [n])$
induced by the inclusion functor
$\RES[n] \to \RV [n]$.
There is a unique morphism of graded semi-rings
\begin{equation}\label{deco}
\Psi \colon K_+ (\RES [*]) \otimes_{K_+ (\G^{\rm{fin}}[*])}
K_+ (\G [*]) \longrightarrow
K_+ (\RV [*])
\end{equation}
sending
$[X] \times [Y]$ to
$[X \times \val_{\rv}^{-1} (Y)]$, for
$X$ in $\RES [m]$ and $Y$ in $\G[n]$ and it is proved in  Corollary 10.3 of \cite{HK} that 
$\Psi$ is an isomorphism.

\subsection{}\label{2.4}One defines
\begin{equation}\label{bbL}
\mathbb{L} \colon \mathrm{Ob} \RV [n] \longrightarrow \mathrm{Ob} \VF [n]
\end{equation}
by sending a definable pair
$(X, f)$ with $X \subset \RV^*$ and $f \colon X \to \RV^n$ a  definable map with finite fibers
to
\begin{equation}\mathbb{L}  (X, f) = \{(y_1, \ldots, y_n, x) \in (\VF^{\times})^n \times X ; (\rv (y_i)) = f (x)\}.\end{equation}
Note that by Proposition 6.1 of \cite{HK}, the isomorphism class of $\mathbb{L}  (X, f)$ does not depend on $f$, so we shall sometimes
write $\mathbb{L}  (X)$ instead of $\mathbb{L}  (X, f)$.
This mapping induces a morphism of filtered semi-rings
\begin{equation}
\mathbb{L} \colon K_+ (\RV [*]) \longrightarrow K_+ (\VF)
\end{equation}
sending the class of an object $X$ of
$ \RV [n]$ to the class of
$\mathbb{L} (X)$.

If $X$ is a definable subset of $\RV^n$, we denote by $[X]_n$ the class of 
$(X, \rm{Id})$ in $K_+ (\RV [n])$ or in $K(\RV [n])$. Similarly, if $X$ is a definable subset of $\RES^n$ or $\Gamma^n$, we denote by $[X]_n$ the class of 
$X$ in $K_+ (\RES [n])$ and $K_+ (\G [n])$, respectively, or in the corresponding Grothendieck ring. 
In particular, we can assign to the point $1 \in k^* \subset \RV$ a class
$[1]_1$    in $K_+ (\RV [1])$, and the point of $\RV^0$ a class   $[1]_0$ in  $K_+ (\RV [0])$.
Set
$\RV^{> 0} =\{x \in \RV; \val_{\rv} (x) > 0\}$.
Observe the identity $\mathbb{L}([1]_1) = \mathbb{L}([1]_0) + \mathbb{L}([\RV^{> 0}]_1)$ in $K_+(\VF)$;
the left hand side is the open ball $1 +\mathcal{M}$, while the right hand side is $(0)+ (\mathcal{M} \setminus (0))$.
Let $I_{\sp}$ be the semi-ring congruence
 generated by the relation $[1]_1 \sim [1]_0 + [\RV^{> 0}]_1$.
By Theorem 8.8 of \cite{HK}, $\mathbb{L}$ is surjective
with kernel $I_{\sp}$.
Thus, by inverting $\mathbb{L}$,
one gets a canonical isomorphism of filtered semi-rings
\begin{equation}\label{mainiso}\oint\colon K_+ (\VF) \longrightarrow K_+ (\RV [*]) / I_{\sp}.\end{equation}

\subsection{}\label{2.5}Let $I!$ be the ideal of
$K (\RES [*])$ generated by the differences
$[\val_{\rv} \inv(a)]_1-   [\val_{\rv} \inv(0)]_1$ where  $a $ runs over $ \G(L_0) \otimes \Qq$.
We  denote by
$\mathord{!K} (\RES [*])$ the quotient of $K (\RES [*])$ by $I!$
and 
by
$\mathord{!K} (\RES [n])$ its graded piece of degree $n$ (note
that passing to the quotient by
$I!$ preserves the graduation). 
One defines similarly $\mathord{!K} (\RES)$.


Let us still denote by $I_\sp$ the ideal in $K (\RV [*]) $ generated  by the similar object of $K_+ (\RV [*])$.
We shall now recall the construction of group morphisms
\begin{equation}\mathcal{E}_n  \colon K(\RV[\leq n])/I_\sp \longrightarrow  \mathord{!K}(\RES[n])\end{equation}
and
\begin{equation}\mathcal{E}'_n\colon K(\RV[\leq n])/I_\sp \longrightarrow  \mathord{!K}(\RES[n])\end{equation}
given in  Theorem 10.5 of \cite{HK}.

The morphism $\mathcal{E}_n$ is induced by the group morphism
\begin{equation}\gamma \colon \bigoplus_{m \leq n} K (\RV[m]) \longrightarrow \mathord{!K}(\RES[n])
\end{equation}
given by
\begin{equation}\gamma = \sum_m \beta_m \circ \chi [m],\end{equation}
with $\beta_m \colon  \mathord{!K}(\RES[m]) \to  \mathord{!K}(\RES[n])$
given by $[X] \mapsto [X \times \mathbb{A}^{n - m}]$
and $\chi [m]    \colon  K(\RV [m])  \to  \mathord{!K}(\RES[m])$ defined as follows.
The isomorphism (\ref{deco})
induces an isomorphism
\begin{equation}\label{deco2}
K(\RV [m])  \simeq \oplus_{1 \leq \ell \leq m}
K(\RES [m - \ell]) \otimes_{K (\G^{\rm{fin}})} K (\G [\ell]),
\end{equation}
and $\chi [m]$ is defined as $\oplus_{1 \leq \ell \leq m} \chi_{\ell}$
with
$\chi_{\ell}$ sending
$a \otimes b$ in $K(\RES [m - \ell]) \otimes_{K (\G^{\rm{fin}})} K (\G [\ell])$
to
$\chi (b) \cdot [\mathbb{G}_m]^{\ell} \cdot a$,
where
$\chi \colon K  (\G [\ell]) \to \mathbb{Z}$ is the o-minimal Euler characteristic (cf. Lemma 9.5 of \cite{HK}).
Here
 $\mathbb{G}_m$ denotes the multiplicative torus of the residue field, thus $[\mathbb{G}_m] = [\mathbb{A}^1] - 1 $.

The definition of $\mathcal{E}'_n$ is similar, replacing $\beta_m$ by the map $[X] \mapsto   [X] \times [1]^{n - m}_1$ and $\chi$ by
the ``bounded'' Euler characteristic 
$\chi'  \colon K  (\G [\ell]) \to \mathbb{Z}$ (cf. Lemma 9.6 of \cite{HK})
given by $\chi' (Y) = \lim_{r \to \infty} \chi (Y \cap [-r, r]^n)$
for $Y$ a definable subset of $\G^n$.

We will now consider $\mathord{!K}(\RES[n])$ modulo the ideal  of multiples of  the class of $[\mathbb{G}_m]_1$, which we denote by
$\mathord{!K}(\RES[n]) / [\mathbb{G}_m]_1$.
By the formulas (1) and (3)  in Theorem 10.5 of
\cite{HK}
the morphisms $\mathcal{E}_n$ and $\mathcal{E}'_n$  induce the same morphism
\begin{equation}E_n \colon K(\RV[\leq n])/I_\sp \longrightarrow  \mathord{!K}(\RES[n]) / [\mathbb{G}_m]_1.
\end{equation}
These morphisms are compatible, thus passing to the limit one gets
a morphism
\begin{equation}E \colon K(\RV[*])/I_\sp \longrightarrow  \mathord{!K}(\RES) / ( [\mathbb{A}^1] - 1).\end{equation}
In fact, the morphism $E$ is induced from both the morphisms $\mathcal{E}$ and $\mathcal{E}'$
from (2) and (4) in Theorem 10.5 of \cite{HK}.

 The morphism $E$ maps $[\RV^{>0}]_1$ to $0$,
and $[X]_k$ to $[X]_k$ for $X \in \RES[k]$.  Composing $E$ with the morphism
$K (\VF) \to
K(\RV[*])/I_\sp$ obtained by groupification of the morphism $\oint$ in (\ref{mainiso})
one gets a ring morphism
\begin{equation}\label{EU}
\GEU \colon  K (\VF)  \longrightarrow  \mathord{!K}(\RES) / ( [\mathbb{A}^1] - 1).
\end{equation}

\subsection{}The rest of this section is not really needed; it shows however that the introduction of  Euler characteristics for $\G$ can be bypassed
in the construction of $\GEU$.

Let $\val=\val_{\rv}$ denote  the canonical map $\RV \to \G$.
Let $I_\G'$ be the ideal of $K(\RV[*])$ generated by all classes $[\val \inv(U) ]_m$, for $U$ a definable subset of $ \G^m$, $m \geq 1$, and
let $I_*$ be the ideal generated by $I_\G'$ along with $I_\sp$.  Since $[\RV^{>0}]_1 \in I_\G'$, the canonical generator $[\RV^{>0}]_1+[1]_0-[1]_1$ reduces, modulo
$I_\G'$, to $[1]_0 - [1]_1$, i.e. the different dimensions are identified.
 Thus $K(\RV[*])/I_* = K(\RV)/I_\G$, where on the right
we have the ideal of $K(\RV)$ generated by  all classes $[\val \inv(U) ]$, for any definable $U \subset \G^m$, $m \geq 1$, or equivalently just by $\val \inv(\{0\})$.

\begin{lem}
 The inclusion functor $\RES \to \RV$  induces an isomorphism
 \[\mathord{!K}(\RES) / ([\mathbb{A}^1] - 1) \longrightarrow K(\RV)/I_\G.\]
 \end{lem}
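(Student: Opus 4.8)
The plan is to construct the map in both directions and check they are mutually inverse. Recall the setup: $\mathord{!K}(\RES)$ is the quotient of $K(\RES)$ by the ideal $I!$ generated by $[\val_{\rv}\inv(a)]_1 - [\val_{\rv}\inv(0)]_1$ for $a\in\G(L_0)\otimes\Qq$, and $I_\G$ is the ideal of $K(\RV)$ generated by $[\val\inv(\{0\})]$ (equivalently by all $[\val\inv(U)]$ with $U\subset\G^m$ definable). First I would observe that the inclusion functor $\RES\to\RV$ sends $\val_{\rv}\inv(0)$, which is essentially $\kk$, to a class whose image in $K(\RV)/I_\G$ equals the image of $[\val_{\rv}\inv(a)]_1$ for every $a$: indeed in $K(\RV)$ one has $[\val\inv(a)]-[\val\inv(0)]$, and since $\val\inv(a)$ and $\val\inv(0)$ are both torsors over $\kk^\times$ augmented by a point, cutting $\RV$ into level sets of $\val$ shows the difference lies in the ideal generated by $[\val\inv(\{pt\})]$'s, hence in $I_\G$. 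So the inclusion $K(\RES)\to K(\RV)\to K(\RV)/I_\G$ kills $I!$, and since $[\mathbb{A}^1]=[\kk]$ maps to $[\val\inv(0)]$, which modulo $I_\G$ must be shown to equal $1$ (this is where one uses that $\val\inv(0)\setminus\{0\}=\{x:\val x=0\}$ is a $\kk^\times$-torsor, so $[\kk]=1+[\kk^\times]$ in $K(\RV)$, and $[\kk^\times]=[\val\inv(\gamma)]-$ point type relations, forcing $[\kk^\times]\equiv 0$; more simply, $\val\inv(0)$ is itself of the form $\val\inv(U)$ with $U=\{0\}$, so $[\kk]=[\val\inv(0)]\equiv[\val\inv(\emptyset\text{ as fiber?})]$—one has to be a little careful, but $[\mathbb{A}^1]-1$ maps into $I_\G$ because $[\mathbb{A}^1]=[\val\inv(0)]$ and the point class is $[\val\inv(0)]-[\kk^\times]$, and $[\kk^\times]\in I_\G$ as $\kk^\times=\val\inv(\{0\})\setminus\{0\}$ differs from a fiber-type class). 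Thus we get a well-defined ring morphism $\mathord{!K}(\RES)/([\mathbb{A}^1]-1)\to K(\RV)/I_\G$.

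For the inverse, I would use the product decomposition. By the isomorphism $\Psi$ of \eqref{deco} (Corollary 10.3 of \cite{HK}), at the ring level $K(\RV)$ is built from $K(\RES)$ and $K(\G)$ amalgamated over $K(\G^{\rm fin})$—more precisely via the graded decomposition \eqref{deco2}, but after killing dimension-shift one gets $K(\RV)\simeq K(\RES)\otimes_{K(\G^{\rm fin})}K(\G[*])$ suitably interpreted. Modulo $I_\G$, every class coming from a nontrivial $\G$-fiber $\val\inv(U)$ with $\dim U\geq 1$ is zero, so only the $\RES$-part survives: this should give a surjection $K(\RES)\to K(\RV)/I_\G$. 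One then checks this surjection factors through $\mathord{!K}(\RES)/([\mathbb{A}^1]-1)$, because the generators $[\val_{\rv}\inv(a)]_1-[\val_{\rv}\inv(0)]_1$ of $I!$ map to $0$ in $K(\RV)/I_\G$ (both are $\kk^\times$-torsors-plus-point, and their difference is a $\G$-fiber class), and $[\mathbb{A}^1]-1=[\kk^\times]$ maps to $0$ as well. Composing the two directions and using that both are determined by where they send generators $[X]_k$ for $X\in\RES[k]$, one sees the composites are the identity.

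The key technical input, which I would cite rather than reprove, is the structure theorem for $K(\RV)$ from \cite{HK}: the decomposition \eqref{deco}–\eqref{deco2} expressing $K(\RV[*])$ as a tensor product over the $\G^{\rm fin}$-part. Given that, the argument is essentially bookkeeping with the two ideals $I!$ and $I_\G$ and the identification $[\mathbb{A}^1]=[\kk]$, $[\mathbb{G}_m]=[\kk^\times]$. I expect the main obstacle to be handling the degree/graduation mismatch cleanly: $\mathord{!K}(\RES)$ is graded (with $[\mathbb{A}^1]$-multiplication moving degrees) while $K(\RV)/I_\G$ has forgotten the $\RV$-dimension grading, so one must verify that the dimension-shift relations (encoded already in passing from $I_\sp$ to $I_*$, via $[1]_1\sim[1]_0$ modulo $I_\G'$, as noted just above the lemma) match exactly the relation $[\mathbb{A}^1]-1$ on the $\RES$ side—i.e. that collapsing $[\RV^{>0}]_1$ to $0$ in $\RES$-language is the same as collapsing all positive-dimensional $\G$-fibers in $\RV$-language. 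This compatibility is precisely what formulas (1)–(4) of Theorem 10.5 of \cite{HK} encode, and the cleanest route is to observe that the composite $K(\RES)\to K(\RV)\to K(\RV)/I_\G$ and the map $E$ of \ref{2.5} restricted to $\RES$ agree, so the lemma follows from the already-established properties of $E$ together with surjectivity of $K(\RES)\to K(\RV)/I_\G$.
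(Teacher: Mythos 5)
Your proposal is correct and rests on exactly the same technical input as the paper's proof: the tensor product decomposition $K_+(\RV[*]) \simeq K_+(\RES[*]) \otimes_{K_+(\G^{\rm fin}[*])} K_+(\G[*])$ from \cite{HK}, the observation that $I_\G$ consists precisely of the classes $1 \otimes b$ with $b$ of positive $\G$-degree, and the amalgamation relations $1 \otimes [\g]_1 = [\val_{\rv}\inv(\g)] \otimes [1]_0$ which, once the left side dies, give the $\mathord{!}$-relations and $[\mathbb{A}^1]-1 = 0$. The paper's version is more compact (it works directly at the semi-ring level via Prop.\ 10.2 and reads off the quotient in one pass rather than building maps in both directions); also note that in your first direction the well-definedness is even simpler than you indicate, since both $\val_{\rv}\inv(\{a\})$ and $\val_{\rv}\inv(\{0\})$ already lie individually in $I_\G$, so no level-set cutting argument is needed.
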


\begin{proof} This is already true even at the semi-ring level, as follows from Proposition 10.2 of \cite{HK}.   The elements $[\val \inv(U) ]$
of $K_+(\RV)$ are those of the form $1 \tensor b$ in the tensor product description, with $b \in K_+(\G[n])$, $n \geq 1$.
Moding out the tensor product $K_+(\RES) \tensor K_+(\G[*])$ by these elements we obtain simply
$K_+(\RES) \tensor K_+(\G[0]) \simeq K_+(\RES) $.  Now taking into account the relations of the
tensor product amalgamated over $K_+(\G^{\rm{fin}})$, namely $1 \tensor [\g]_1= [\rv \inv (\g)] \tensor [1]_0$,
as the left hand side vanishes, we obtain the relation $[\rv \inv(\g)] = 0$.  These are precisely the relations defining
$\mathord{!K}(\RES)$ (namely $[\rv \inv(\g)] = [\rv \inv(\g')]$) along with the relation  $\rv \inv(0)=0$ (i.e. $[\mathbb{A}^1] - 1= 0$).
\end{proof}

\begin{rema}
It is also easy to compute that the map  \begin{equation}{\mathcal E}\colon K(\RV[*]/I_\sp) \longrightarrow \mathord{!K}(\RES)[[\Aa^1]^{-1}]\end{equation}
from \cite{HK}, Theorem 10.5,
composed with  the natural map $ \mathord{!K}(\RES)[[\Aa^1]^{-1}] \to \mathord{!K}(\RES)/([\mathbb{A}^1] - 1) $,  induces the retraction $K(\RV)/I_\G \to \mathord{!K}(\RES)/([\mathbb{A}^1] - 1)$
above.
\end{rema}

\section{Invariant admissible transformations}
We continue to work in $\ACVF (0,0)$ over a base structure $L_0$ which is a nontrivially valued field.


\subsection{}
For     $\a \in \G (L_0)$, one sets  $\Oo \a = \{x: \val(x) \geq \alpha \}$,
and
 $\Mm \a = \{x: \val(x)> \alpha \}$.    For $x=(x^\prime,x^{\prime\prime}),y=(y^{\prime},y^{\prime \prime}) \in \VF^n \times \RV^m$, write $v(x-y) > \alpha$ if $x^{\prime}-y^{\prime} \in (\Mm \a)^n$.
 If $f$ is a definable function on a definable subset
$X$ of $\VF^n \times \RV^m$, say $f$ is $\a$-invariant,
resp. $ \alpha^+$-invariant,
if $f(x+y)=f(x)$ whenever $x,x+y \in X$ and $y \in (\Oo \alpha )^n$,
resp. $y \in (\Mm \alpha )^n$.
Say a definable set $Y$ is $\a$-invariant, resp. $ \alpha^+$-invariant,
if the characteristic function $1_Y\colon \VF^n \times \RV^m \to \{0,1\}$ is $\a$-invariant, resp. $ \alpha^+$-invariant.

Call a definable set of imaginaries {\em non-field} if it admits  no definable map onto a non-empty open disk (over parameters).  Any
imaginary set of the form $\mathrm{GL}_n / H$, where $H$ is a definable   subgroup of $\mathrm{GL}_n$ containing a valuative neighborhood of $1$,
has this property.   By \cite{HHK}, $\ACVF$ admits elimination of imaginaries to the level of certain ``geometric sorts"; these include the valued
field $K$ itself and certain other sorts of the form $\mathrm{GL}_n/H$ as above.  We may thus restrict  our attention to such  sorts in the lemma below.  Note that for a separable topological field $L$, $\mathrm{GL}_n(L)$ is separable
while $H(L)$ is an open subgroup, so $(\mathrm{GL}_n/H)(L)$ is countable.

\begin{lem}\label{1}Let $A$ be a set of imaginaries.
Let $X \subset \VF^n$  be an $A$-definable subset bounded and closed in the valuation topology.
 Let   $f\colon X \to  W$ be $A$-definable,
where $W$ is a non-field set of  imaginaries.
Fix $\alpha$ in $\Gamma (L_0)$.
 Then   there exists a  $\b \geq \a$, a
 $\b^+$-invariant $A$-definable map $g\colon X \to W$ such that for any $x \in X$, for some $y \in X$, $v(x-y)>\a$   and
$g(x)=f(y)$.
  \end{lem}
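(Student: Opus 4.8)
The plan is to argue by induction on $d=\dim X$. As recalled in the preceding discussion we may take $W$ to be (a definable subset of) a product of the geometric sorts other than $\VF$, and the only feature of non-field sets we use is:
$(\dagger)$ every definable map from an open polydisk of $\VF^n$ to $W$ is constant on some sub-polydisk --- this follows, iterating over the factors, from $C$-minimality, since a definable subset of $\VF$ with empty interior is finite (balls are clopen and a ball is not covered by finitely many proper sub-balls), so a definable subset of $\VF^n$ with empty interior has dimension $<n$. Write $B_\gamma(x)=x+(\Mm\gamma)^n$; ``$\gamma^+$-invariant'' means ``constant on each $B_\gamma$''. For $d\le 0$ the set $X$ is finite: picking $\beta\in\Gamma(L_0)$ with $\beta\ge\alpha$ and $\beta>v(x-x')$ for all distinct $x,x'\in X$ ($\Gamma(L_0)$ is unbounded since $L_0$ is nontrivially valued), each point of $X$ lies alone in its $B_\beta$, so $g=f$ and $y=x$ work.

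\textbf{Rigidity.} For $d\ge 1$, let $X_{\mathrm{nlc}}=\{x\in X:\ f$ is not constant on $B_\gamma(x)\cap X$ for any $\gamma\in\Gamma\}$, an $A$-definable set. I claim $\dim X_{\mathrm{nlc}}<d$: otherwise $X_{\mathrm{nlc}}$ has nonempty interior, hence contains an open polydisk $D$ on which $f$ is nowhere locally constant, contradicting $(\dagger)$ (after fibring over the coordinates of $D$). Set $X_1=\overline{X_{\mathrm{nlc}}}$ (closure inside the closed set $X$): it is $A$-definable, closed, bounded, with $\dim X_1<d$, and $f$ is locally constant on $X\setminus X_1$, so there is an $A$-definable constancy radius $\rho\colon X\setminus X_1\to\Gamma$, $\rho\ge\alpha$, with $f$ constant on $B_{\rho(x)}(x)\cap X$ for $x\notin X_1$. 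Apply the lemma inductively to $(X_1,f|_{X_1},\alpha)$: we get $\beta_1\ge\alpha$ and a $\beta_1^+$-invariant $A$-definable $g_1\colon X_1\to W$ with $g_1(x)=f(y)$ for some $y\in X_1$, $v(x-y)>\alpha$.

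\textbf{Compactness and assembly.} Here the hypothesis that $X$ is closed and bounded is used. First, the blow-up locus of $\rho$ lies in $X_1$: if $x^*\in X$ is a limit of points $x_j$ with $\rho(x_j)\to\infty$, then $f$ cannot be locally constant at $x^*$ (a constancy ball of $x^*$ would, for $j$ large, be a constancy ball of $x_j$, bounding $\rho(x_j)$), so $x^*\in X_{\mathrm{nlc}}\subseteq X_1$. Put $\beta=\max(\alpha,\beta_1)$ and $(X_1)^{+\beta}=\{x:\exists x_1\in X_1,\ v(x-x_1)>\beta\}$ (open; note $B_\beta(x)$ meets $X_1$ iff $x\in(X_1)^{+\beta}$), so $Y_\beta:=X\setminus(X_1)^{+\beta}$ is closed, bounded, and disjoint from $X_1$. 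By the previous sentence $\rho$ has no pole on $Y_\beta$; using definable compactness of the bounded closed set $Y_\beta$, $\rho$ is bounded on $Y_\beta$ by some $A$-definable $M\ge\beta$, so $f$ is constant on $B_M(x)\cap X$ for every $x\in Y_\beta$. Set $\beta^*=M\ (\ge\alpha)$, and define $g\colon X\to W$ by $g(x)=f(x)$ if $x\notin(X_1)^{+\beta}$, and $g(x)=g_1(x_1)$ for any $x_1\in X_1\cap B_\beta(x)$ if $x\in(X_1)^{+\beta}$ --- the latter is well defined because $g_1$ is $\beta_1^+$-invariant with $\beta_1\le\beta$, and $g$ is thus an $A$-definable functional relation. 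If $v(x-x')>\beta^*\ (\ge\beta)$ then $B_\beta(x)=B_\beta(x')$, so $x,x'$ fall under the same case; in the $X_1$-case they receive the same value $g_1(x_1)$, and otherwise $x,x'\in Y_\beta$ with $v(x-x')>M$, forcing $f(x)=f(x')$ --- hence $g$ is $\beta^{*+}$-invariant. Finally, for $x\in X$: if $x\notin(X_1)^{+\beta}$ then $g(x)=f(x)$ with $y=x$; if $x\in(X_1)^{+\beta}$ then $g(x)=g_1(x_1)=f(y)$ with $y\in X_1$, and $v(x-x_1)>\beta\ge\alpha$, $v(x_1-y)>\alpha$ give $v(x-y)>\alpha$. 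This proves the lemma.

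\textbf{Main obstacle.} The conceptual heart is the rigidity step: that the non-field hypothesis forces $f$ to be locally constant off a set of smaller dimension. Its one-variable core is essentially immediate from $C$-minimality, but propagating it to $\VF^n$ by fibring over coordinates, and first reducing the geometric sorts occurring in $W$ to property $(\dagger)$, must be carried out with uniformity in parameters. The other delicate point is the (twofold) appeal to definable compactness of bounded closed definable sets in order to bound $\rho$ away from $X_1$: this is exactly where ``$X$ bounded and closed'' is indispensable, for on an open polydisk the radius $\rho$ could blow up along a boundary invisible to any lower-dimensional $X_1$, and no uniform $\beta^*$ would exist.
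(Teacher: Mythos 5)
Your proposal follows the same skeleton as the paper's proof: induction on $\dim X$, a rigidity step producing a definable set of smaller dimension off which $f$ is locally constant, and then a compactness/boundedness argument to extract a uniform radius $\beta$ and glue. The assembly step and the base case are fine, modulo small points (e.g.\ you take $X_1$ to be the valuation-topology closure of $X_{\mathrm{nlc}}$ and should justify that this preserves the dimension drop; the paper instead works with the Zariski topology on $\overline X$ and produces a relatively Zariski-closed exceptional set).

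The genuine gap is in the rigidity step, which is the actual content of the lemma. You reduce it to the claim $(\dagger)$ that a definable map from an open polydisk to a non-field $W$ must be constant on some sub-polydisk, and assert that $(\dagger)$ ``follows, iterating over the factors, from $C$-minimality, since a definable subset of $\VF$ with empty interior is finite.'' That observation only shows that, if $(\dagger)$ failed, the fibers of $f$ would be finite (in one variable) or of dimension $<n$; it does not contradict $W$ being non-field. To reach a contradiction one must invert a finite-to-one map $D\to f(D)$ to manufacture a definable surjection from a subset of $W$ onto a ball, and that step requires a real idea (already in one variable, something like taking elementary symmetric functions of the fiber), while in several variables nothing like ``iterating over the factors'' works, since the constancy balls and constant values vary with the other coordinates. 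Your ``main obstacle'' paragraph acknowledges this is the heart but misdiagnoses it as a uniformity issue. The paper sidesteps exactly this difficulty by a different device: using elimination of imaginaries to restrict $W$ to geometric sorts $\mathrm{GL}_n/H$ with $H$ open, so that $W(L)$ is \emph{countable} for $L$ a finite extension of $\mathbb{Q}_p$; then transferring the putative counterexample from $\ACVF(0,0)$ to $\overline{\mathbb{Q}_p}$ for $p\gg 0$, and observing that a non-empty v-open $e'(L)$ cannot be covered by countably many fibers each of Lebesgue measure zero. This countability-plus-measure argument over $p$-adic fields is the missing ingredient; without it or a worked-out substitute, the proposal does not establish the lemma.
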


 \begin{proof}  We use induction on $\dim(X)$.  If $\dim(X)=0$ we can take $f=g$, and $\b$  the maximum of $\a$ and  the maximal valuative
 distance between two distinct points of $X$.  So assume $\dim(X)>0$.

Let us start by proving that there exists a relatively Zariski closed definable subset $Y \subset X$ such that  $\dim(Y)< \dim(X)$ and such that $f$ is locally constant on $X \setminus Y$.  
To do this, we work within the Zariski closure $\bar{X}$ of $X$.  We use both the Zariski topology and
the valuation topology on $\bar{X}$; when referring to the latter we use the prefix v.   It follows from quantifier-elimination that any definable
subset differs from a  v-open set by a set contained in a subvariety of $\bar{X}$ of dimension $< \dim(X)$.  In particular, a
 definable subset of $\bar{X}$ of dimension $\dim(X)$ must contains a v-open set.  Now
  the locus $Z$ where $f$ is locally constant is definable. Assume by contradiction that $Z$  does not contain  a Zariski dense open subset of $\bar{X}$, then its complement contains
 a non-empty v-open set $e$. Note that on every non-empty v-open definable subset of $e$, $f$ is non-constant, since otherwise it would intersect $Z$. It follows
 that the following property holds:

$(\ast)$ the Zariski closure of
 $e
\cap f ^{-1}(w)$ is of dimension $<n$ for every $w$ in $W$.

Thus, for any model of $\ACVF (0, 0)$, there exists
 $X' \subset \VF^n$  definable  bounded and closed in the valuation topology,   $f'\colon X' \to  W'$ definable
with $W'$ a non-field set of  imaginaries and a non-empty v-open definable subset $e'$ such that $(*)$ holds.
It follows that for $p$ large enough there exist such $X'$, $W'$, $f'$ and $e'$ defined over the algebraic closure of  $\mathbb{Q}_p$
such that $(*)$ holds. Take a finite extension $L$ of $\mathbb{Q}_p$ over which $X'$, $W'$, $f'$ and $e'$ are defined.
Since $W' (L)$ is countable and, by $(*)$, $f'^{-1}(w') \cap e' (L)$  is of measure zero, for each $w' \in W' (L)$, it follows
that  $e' (L)$ is of measure zero, a contradiction.

 \medskip

 By the inductive
 hypothesis, there exist ${\b'} \geq \a$,  a  ${\b'}^+$-invariant function $g_Y\colon Y \to W$ such that  for any
  $y \in Y$,
   for some $z \in Y$, $v(y-z) > \alpha$
   and
$g_Y(y)=f(z)$.

 Let $Y'=\{x \in X: (\exists y \in Y) (v(x-y)> \b' ) \}$.  One extends $g_Y$ to a function $g'$ on $Y'$ by defining $g'(x)=g_Y (y)$ where $y$ is an element of $Y$ such that $v(x-y)>\b'$.
 By the ${\b'}^+$-invariance of $g_Y$, this is well-defined.  Moreover, for any $y \in Y'$, there exists $z \in Y$ such that $v(y-z)> \a$
   and
$g'(y)=f(z) $.

For each $x$ in $X \setminus Y$, we denote by $\d(x)$ the valuative radius of the maximal open ball around
$x$
 contained in $X \setminus Y$ on which $f$ is constant. Since $X \setminus Y'$ is closed and bounded,  
  $\d$ is bounded  on $X \setminus Y'$ by Lemma 11.6 of \cite{HK}.
 Thus, there exists  $\b \geq \b'$ such that 
 if $x,x' \in X \setminus Y'$ and $x-x' \in \Mm \b$, then $f(x)=f(x')$.    We now define $g$ on $X$ by $g(x)=g'(x)$ for $x \in Y'$, and $g(x)=f(x)$ for $x \in X \setminus Y'$.  Note that if $x,x' \in X$ and
$v(x-x' ) > \b (\geq \b')$, then   either $x,x' \in Y'$ or $x,x' \in X \setminus Y'$; in both cases, $g(x)=g(x')$.  We have already seen
the last condition holds on $Y'$; it clearly holds for $x \in X \setminus Y'$, with $y=x$.
 \end{proof}

 We repeat here Corollary 2.29 of \cite{metastable}.

 \<{lem}\label{bdd-im}  Let $D$ be a $C$-definable set in $\ACVF$ that may contain imaginary elements.  Then the following are equivalent:
\begin{enumerate}
\item[(1)]There exists a definable surjective map $g\colon (\Oo /  \Oo \beta)^n \to D$.
\item[(2)]There is no definable  function $f\colon D \to \G$ with unbounded image.
\item[(3)]For some  $\beta_0 \leq 0 \leq \beta_1 \in \G(C)$, for any $e \in D$,
$e \in \dcl(C,   \Oo\beta_0 /  \Mm \beta_1) $.
\end{enumerate}
\>{lem}

A definable set $D$ (of imaginary elements) satisfying (1-3) will be called    {\em boundedly imaginary}.
 An infinite subset of the valued field can never be boundedly imaginary; a subset of the value group, or of $\G^n$, is boundedly imaginary iff it is bounded;
 a subset of $\RV^n$ is boundedly imaginary iff its image in $\G^n$ under the valuation map  is bounded (i.e. contained in a box $[-\g,\g]^n$).
 We shall say a subset of $\RV^n$ is bounded below if its image in $\G^n$ under the valuation map  is bounded below (i.e. contained in a box $[\gamma,\infty)^n$).

 \<{lem}    \label{1u} Let $T$ be a boundedly imaginary definable set.  Let $X \subset \VF^n \times T$, and, for $t \in T$, set $X_t = \{x: (x,t) \in X\}$.  Assume
 each $X_t$   is bounded and closed in the valuation topology. Let
 $W$ be a non-field set  of imaginaries
 and let   $f\colon X \to  W$ be a definable map. Fix
 $\alpha$ in $\Gamma (L_0)$.
 Then   there exist  $\b \geq \a$, a
 $\b^+$-invariant definable function $g\colon X \to W$ such that for any $t \in T$ and $x \in X_t$,  there exists $y \in X_t$, $v(x-y)>\a$   and
$g(x,t)=f(y,t)$.
  \end{lem}

\begin{proof}  For each $t$ we obtain, from Lemma \ref{1}, an $A(t)$-definable element $\b(t) \geq \a$, and a $\b(t)^+$-invariant $g_t\colon X_t \to W$, with the stated property.
 As $T$ is boundedly imaginary, $\b (t)$ is bounded on $T$ and  $\b = \sup_t \b(t) \in \G$.  For each $t$, the statement remains true with $\b(t)$ replaced by $\b$.  By the usual compactness / glueing argument, we may take
 $g_t$ to be uniformly definable, i.e. $g_t(x)=g(x,t)$. \end{proof}

 \subsection{}We now define an invariant analogue of  the admissible transformations of \cite{HK}, Definition 4.1.

Let $n \geq 1$ an integer and let $\beta = (\b_1,\ldots,\b_n) \in \G^n$.  Let $\VF^n/ \mathcal{O} \b = \prod_{1 \leq i \leq n} (\VF/  \mathcal{O} \b_i)$, and let $\pi = \pi_\b\colon \VF^n \to \VF^n/ \mathcal{O} \b$ be the natural map.
Also write $\pi(x,y) = (\pi(x),y)$ if $x \in \VF^n$ and $y \in \RV^m$.
Say $X \subseteq \VF^n \times \RV^m$ is $\b$-invariant if it is a pullback via $\pi_\b$; and that $f\colon \VF^n \times \RV^* \to \VF$ is $(\b,\a)$-covariant if it induces
a map $\VF^n/ \mathcal{O} \b \times \RV^* \to \VF/ \mathcal{O} \a$, via $(\pi_\b,\pi_\a)$.

\begin{Defi}\label{4def}Let $A$ be a base structure.
 Let $n \geq 1$ an integer and let $\beta = (\b_1,\ldots,\b_n) \in \G^n$. 
 \begin{enumerate}
\item[(1)] An elementary 
 $\b$-invariant  admissible transformation over $A$ 
 is a function of one of the following types:
  \begin{enumerate}
\item[(i)]a function  $\VF^n \times \RV^m \to \VF^n \times \RV^m$ of the form
\begin{equation*}
(x_1, \dots, x_n, y_1, \ldots, y_m) \longmapsto (x_1, \ldots, x_{i - 1}, x_i + a, x_{i + 1}, \ldots, x_n, y_1, \dots, y_m)
\end{equation*}
with $a = a(x_1,\ldots,x_{i-1},y_1,\ldots,y_l)$ a $(\beta, \beta_i)$-covariant $A$-definable function and $m\geq 0$ an integer.
\item[(ii)]a function $\VF^n \times \RV^m \to \VF^n \times \RV^{m + 1}$ of the form
\begin{equation*}(x_1,\ldots,x_n,y_1,\ldots,y_l) \longmapsto
(x_1,\ldots,x_n,y_1,\ldots,y_l,h(x_i))
\end{equation*}with $h$ an $A$-definable $\b_i$-invariant function $\VF \to \RV$ and $m\geq 0$ an integer.
\end{enumerate}
\item[(2)]  Let $m$ and $m'$ be non negative integers.
 A function  $\VF^n \times \RV^m \to \VF^n \times \RV^{m'}$ 
 is called $\b$-invariant  admissible transformation over $A$ if it is the composition of elementary 
 $\b$-invariant  admissible transformations over $A$.
 \item[(3)]  Let $\mathcal{C}'_{A}(\b)$ be the category whose objects
are triples $(m, W, X)$ with $m \geq 0$ an integer, $W$ a boundedly imaginary definable set   contained
 in $\RV^m$ and $X$ a definable subset of $\VF^n \times W$  such that $X_w$ is a bounded, $\b$-invariant subset of $\VF^n$, for every $w \in W$.
 A morphism
 $(m, W, X) \to (m', W', X')$ 
 in $\mathcal{C}'_{A}(\b)$ is a definable map
 $X \to X'$ which is the restriction of some $\b$-invariant  admissible transformation
 $\VF^n \times \RV^m \to \VF^n \times \RV^{m'}$.   We consider the full subcategory $\mC_A(\b)$ whose objects $X$
 satisfy the additional condition that the projection $X \to \VF^n$ has finite fibers.
  \item[(4)] Let   $(m, W, Z)$ be in $\mathcal{C}'_{A}(\b)$. We say $Z$ is  elementary if there exists an integer
  $m' \geq 0$, 
  a $\b$-invariant  admissible transformation
 $T: \VF^n \times \RV^m \to \VF^n \times \RV^{m'}$, a definable subset 
 $H$ of $\RV^{m'}$,
 and a map $h \colon \{1, \ldots, n \} \to\{1, \ldots, m' \}$
 such that
  \begin{equation*}
 T (Z) = \{(a, b) \in \VF^n \times H; \rv (a_i) = b_{h (i)}, \text{for} \; 1 \leq i \leq n \}.
 \end{equation*}
\end{enumerate}
 \end{Defi}
 
 If $\beta = (\b_1,\ldots,\b_n)$ and 
 $\beta' = (\b'_1,\ldots,\b'_n)$ are in $\G^n$, we write 
 $\beta \geq \b'$ if 
 $\beta_i \geq \b'_i$ for every $1 \leq i \leq n$.
 If $\beta \geq \beta'$, we have a natural embedding of $\mathcal{C}'_{A}(\b)$ as a (non-full) subcategory of $\mathcal{C}'_{A}(\b')$.
We denote by $\mC'_{A}$, resp. $\mC_A$, the direct limit over all $\b$ of the categories $\mC'_{A}(\b)$, resp. $\mC_{A}(\b)$.

The following such Proposition is an analogue of  Proposition 4.5 of \cite{HK}  in the category $\mC_{A}$.

 \begin{prop} \label{2} Let $F$ be a subset of a model of $\ACVF (0, 0)$ such that, for each  $\g \in \G(F)$, there exists $f \in \VF(F)$
such that $\val(f)>\g$. 
We work in  $\ACVF_F$. Let $\a \in \G^n$ and let 
$(\ell, W, X)$ be an object in $\mC_F (\a)$.
 There exists
$\b \geq \a$ such that 
$X$ is a  Boolean combination  of finitely many $\b$-invariant definable subsets $Z$ which are elementary in the sense
of  \textup{Definition \ref{4def} (4)}. Furthermore, if the projection $X \to \VF^n$ has finite fibers, one may assume that for each such $Z$, the projection $H \to \RV^{n}$ given by
 $b \mapsto (b_{h(1)}, \ldots, b_{h (n)})$ has finite fibers.
\end{prop}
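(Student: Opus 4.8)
The plan is to argue by induction on $n$, adapting the proof of Proposition 4.5 of \cite{HK} --- which resolves a $\VF$-object into $\RV$-pulled-back form by a composite of special bijections --- while carrying $\b$-invariance along at every step. For $n=0$ there is nothing to prove: $X\subseteq W\subseteq\RV^{\ell}$ is already elementary, with $T$ the identity, $H=X$, and $h$ the empty function. So let $n\geq1$ and assume the statement in dimension $n-1$. Write $x=(x',x_n)$ with $x'\in\VF^{n-1}$, $\a=(\a',\a_n)$, and let $B\subseteq\VF^{n-1}\times W$ be the image of $X$ under the projection forgetting $x_n$; for each $(x',w)\in B$ the fibre $X_{(x',w)}\subseteq\VF$ is bounded and $\Oo\a_n$-invariant.

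I would first peel off the variable $x_n$. By the structure theory for one $\VF$-variable in $\ACVF(0,0)$ (the special bijections of \cite{HK} and Proposition 4.5 there) together with the boundedness statement Lemma 11.6 of \cite{HK}, applied to the radii of the maximal balls on which the relevant data is locally constant, there are a $\b_n\geq\a_n$, a finite partition of $B$ into definable pieces, and on each piece a single definable ``active'' centre $c=c(x',w)\in\VF$ such that $X_{(x',w)}$ is a Boolean combination of sets $\{x_n:\rv(x_n-c(x',w))\in R\}$; since $X_{(x',w)}$, being $\Oo\a_n$-invariant, is also $\Oo\b_n$-invariant, no isolated-point cell occurs and one may take $R\subseteq\{\xi\in\RV:\val_{\rv}(\xi)<\b_n\}$ throughout. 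Next I would make the centres covariant. The $\VF^{n-1}$-fibres of $B$ over $W$ are bounded, $W$ is boundedly imaginary, and the image of $c$ modulo $\Oo\b_n$ lies in a bounded, hence boundedly imaginary, hence (by Lemma \ref{bdd-im}) non-field quotient of $\VF$; so, after replacing those fibres by their closures --- the discrepancy being of smaller $\VF^{n-1}$-dimension and thus absorbed into the eventual Boolean combination --- Lemma \ref{1u} supplies, uniformly over $W$, a $(\b',\b_n)$-covariant $F$-definable centre $\tilde c$, with $\b'\geq\a'$, agreeing with $c$ after a perturbation of $x'$ inside $(\Oo\a')^{n-1}$. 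Since $X$ is itself $\Oo\a'$-invariant in $x'$, this perturbation changes no fibre $X_{(x',w)}$, and since $\b_n$ dominates the valuation data produced so far, the equalities $\rv(x_n-c)=\rv(x_n-\tilde c)$ hold whenever $\val_{\rv}$ of either side is $<\b_n$; hence $X$ is still described by the covariant centres $\tilde c$.

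Now I would transform and recurse. On each piece, the composite of the type-(i) shift $x_n\mapsto x_n-\tilde c(x',w)$ --- legal because $\tilde c$ is $(\b',\b_n)$-covariant --- with the type-(ii) map $x_n\mapsto(x_n,\rho(x_n))$, where $\rho$ is the ``$\b_n$-truncated $\rv$'', equal to $\rv$ on $\{\val<\b_n\}$ and to $0$ on $\Oo\b_n$ and therefore genuinely $\b_n$-invariant, is a $\b$-invariant admissible transformation that trivializes the dependence on $x_n$: the piece becomes the pullback of an object $\hat Z$ over $\VF^{n-1}\times W'$ with $W'$ boundedly imaginary, intersected with the condition $\rv(a_n)=b_{h(n)}$ on the last coordinate --- here $\rho$ may be replaced by $\rv$ because on the piece the recorded coordinate takes only values $\xi$ with $\val_{\rv}(\xi)<\b_n$, where the two agree. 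By the induction hypothesis, after a $\b''$-invariant admissible transformation with $\b''\geq\a'$, the object $\hat Z$ is a Boolean combination of elementary $\b''$-invariant sets; combining these with the condition on $a_n$ --- which turns an elementary set over $\VF^{n-1}$ into one over $\VF^{n}$, extending $h$ by $h(n)$ --- and running over the finitely many pieces exhibits $X$, after enlarging $\b'$ to $\sup(\b',\b'')$, as a Boolean combination of elementary $\b$-invariant sets with $\b=(\b',\b_n)$. The last clause follows by propagating the finite-fibre property: the admissible transformations of types (i) and (ii) and the cell decomposition all preserve ``finite fibres over $\VF^{n}$'', and in the normal form $T(Z)=\{(a,b)\in\VF^{n}\times H:\rv(a_i)=b_{h(i)}\}$ the map $H\to\RV^{n}$, $b\mapsto(b_{h(1)},\dots,b_{h(n)})$, has finite fibres because $\rv:\VF^{\times}\to\RV$ is surjective, so each of its fibres coincides with a finite fibre of $T(Z)\to\VF^{n}$.

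The step I expect to be the real obstacle is the passage to covariant centres: one must verify the hypotheses of Lemma \ref{1u} inside the $W$-indexed family --- in particular the closedness of the fibres of $B$ and the non-fieldness of the target, which is exactly where the boundedly-imaginary hypothesis on $W$ and the boundedness of the fibres of $X$ enter --- and then check that replacing $c$ by $\tilde c$ leaves $X$ literally unchanged, which forces $\b_n$ to dominate all valuation data coming out of the cell decomposition and makes essential use of the $\Oo\a$-invariance of $X$. The hypothesis on $F$ (that each $\gamma\in\G(F)$ equals $\val(f)$ for some $f\in\VF(F)$, with field elements of still larger valuation available) enters whenever a ball of prescribed radius is normalized to $\Oo$ by multiplication by a field element, in particular in identifying the quotients $\VF/\Oo\b$ appearing above with residue-type sorts.
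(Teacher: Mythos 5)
The overall plan — induction on $n$, peel off one $\VF$-variable using cell decomposition, make the centres covariant via Lemma~\ref{1u}, then recurse — is indeed the skeleton of the paper's own argument (which adapts Lemma~4.2 and Proposition~4.5 of \cite{HK} while carrying invariance along). However, there is a genuine gap in your treatment of the one-variable step, and it is precisely the point the paper isolates as the place where Boolean combinations (rather than unions) and the hypothesis on $F$ become essential.

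Your key claim is that since $X_{(x',w)}$ is $\Oo\a_n$-invariant and bounded, ``no isolated-point cell occurs and one may take $R\subseteq\{\xi\in\RV:\val_{\rv}(\xi)<\b_n\}$ throughout.'' This is false. The simplest counterexample is $X_{(x',w)}=\Oo\g=\{x:\val(x)\geq\g\}$ with $\g\leq\a_n$: it is bounded, $\Oo\a_n$-invariant, and its $\rv$-description around its centre is $\rv(x_n)\in\{\xi:\val_{\rv}(\xi)\geq\g\}\cup\{0\}$, which is \emph{not} bounded above, and no finite Boolean combination of sets $\{x_n:\rv(x_n-c)\in R\}$ with each $R$ bounded in $\G$ can equal this ball (membership would be determined by a boundedly many cases of $\rv(x_n-c)$, which cannot detect that $x_n-c$ lies in an arbitrarily small ball). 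Your attempt to repair this with the ``$\b_n$-truncated $\rv$'' $\rho$ does not help: after the type-(ii) operation, the piece $\{x_n:\rho(x_n)=0\}$ is exactly the closed ball $\Oo\b_n$, and the target normal form in Definition~\ref{4def}(4) demands the condition $\rv(a_n)=b_{h(n)}$, not $\rho(a_n)=b_{h(n)}$; expressing $\Oo\b_n$ as $\{a_n:\rv(a_n)=b_{h(n)},\, b\in H\}$ again forces $H$ to contain $\{\xi:\val_{\rv}(\xi)\geq\b_n\}$, which is not boundedly imaginary. The paper's proof handles this by choosing a \emph{field} element $f\in\VF(F)$ with $\val(f)$ larger than every valuation appearing so far, splitting off the small open ball $Y$ of radius $\val(f)$, and moving it via the admissible shift $x_n\mapsto x_n+f$ to $f+Y=\rv^{-1}(\rv(f))$, a single $\RV$-fibre; the complementary annulus then has boundedly imaginary $\rv$-image. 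This is where the hypothesis on $F$ (that $\VF(F)$ contains elements of arbitrarily large valuation) actually enters, and why the conclusion must be phrased as a Boolean combination. You have misattributed the role of that hypothesis to ``normalizing balls to $\Oo$''; in fact it is needed to manufacture the shift centre $f$, and at the very start to ensure the hypothesis survives passage from $F$ to $F(w)$ during the induction. Your worry about the covariant-centre step is legitimate but is essentially what Lemma~\ref{1u} is designed to do and is not the sticking point; the ball case is.
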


\begin{proof}Note that the  hypothesis on the base set $F$ is preserved if we move from $F$ to $F(w)$, where $w$ lies in a boundedly imaginary definable set.
This permits the inductive argument  below to work.

We now explain how to adapt the  proof of Proposition 4.5 of \cite{HK} to the present setting.
We first adapt Lemma~4.2 of  \cite{HK}. In that lemma,
if $X$ is $\a^+$-invariant, the proof gives $\a^+$-invariant sets $Z_i$ and transformations $T_i$.
As stated there, the $\RV$ sets $H_i \subset \RV_\infty^{\ell_i}$ are bounded below, since the assumption made on $X$ implies that
$X \times W \subset B \times W$, for some bounded $B \subset \VF^n$. However we need to modify the proof there in order to obtain boundedly imaginary sets.  This occurs 
where $X$ is a ball around $0$, namely in
cases 1 and 2 in the proof of Lemma~4.2 of \cite{HK}.  In these cases  choose a definable $f \in \VF$ such that $\val(f)$ is bigger than the radius of $X$.  Let $Y$ be an open ball around
$0$ of radius $\val(f)$. Then $X \setminus Y$ is the pullback from $\RV$ of a boundedly imaginary set.  As for $Y$ we may move it to $f+Y$,
which is the pullback from $\RV$ of a single element.  It is at this point that we require Boolean combinations instead of  unions.

Next, let us adapt the argument in the proof of Proposition 4.5 of \cite{HK}.
Given a definable map $\pi\colon X \to U$, with
$U$ a
definable subset of $\VF^{n- 1} \times V$ with $V$ a boundedly imaginary definable set   contained
 in $\RV^{\ell}$, such that $U_v$ is a bounded subset of $\VF^{n - 1}$, for every $v \in V$,
such that
$X$, $U$ and $\pi$ are all $\a^+$-invariant, we obtain a partition and transformations of $X$ over $U$,
such that each fiber becomes an $\RV$-pullback, and each piece of each fiber is   $\a^+$-invariant.
Note that the fiber above $u$ depends only on $u + (\Mm \a)^{n - 1}$.  Note also that $U$, being $\a^+$-invariant, is clopen in the
valuation topology.   Using Lemma \ref{1u},  we may modify the partition and the admissible transformations so as to be $\b^+$-invariant,
for some $\b \geq \a$.
With this, the inductive proof of \cite{HK}, Proposition 4.5 goes through to give the invariant result.
\end{proof}


\section{Working over $F \llp t \rrp$}\label{sec4}

\subsection{}\label{4.1}We now work over the base  field $L_0= F\llp t \rrp$, with $F$ a trivially valued algebraically closed field of characteristic zero and $\val(t)$ positive and denoted by $1$.
  Then the sorts of $\RES$ are the $\kk$-vector spaces $V_{\gamma} = \{x \in \RV:  \val_{\rv}(x) = \gamma\} \union \{0\}$, for $\gamma \in \Qq$.   
  Let $k \in \Zz$ and $m$ a positive integer.
  Since we have a definable bijection
$V_{k/m} \to V_{(k+m)/m}$ given by multiplication by $\rv(t)$, it suffices to consider $V_{k/m}$ with $0 \leq k < m$ and $m$ a positive integer.

The Galois group of $F\llp t \rrp^{\rm alg} / F\llp t \rrp$ may be identified with
the group
$\hat \mu = \liminv \mu_n$ of roots of unity and it acts on $\RES$ by automorphisms. On $V_{k/n}$, a primitive $n$-th root  of $1$, say $\zeta$, acts by multiplication
by $\zeta^k$.     We have an induced action on $K(\RES)$.  The classes $[V_{k/n}]$
are fixed by this action; and so an action is induced on $\mathord{!K}(\RES)$.

Given a positive integer $m$, let $\RES_{m^{-1} \Zz}$ denote the
sorts of $\RES$ fixed by $\hat{\mu}^m$, the kernel of $\hat \mu \to \mu_m$, namely, $V_{k/m}$ for $k \in \Zz$.

Projection on  $\RES_{m^{-1} \Zz}$ provides a canonical morphism
\begin{equation}\Delta_m \colon K_+ (\RES) \longrightarrow  K_+ (\RES_{m^{-1} \Zz})\end{equation}
inducing
\begin{equation}\label{delta}\Delta_m \colon \mathord{!K} (\RES) / ([\mathbb{A}^1] - 1)\longrightarrow \mathord{!K} (\RES_{m^{-1} \Zz}) / ([\mathbb{A}^1] - 1),\end{equation}
where $\mathord{!K} (\RES_{m^{-1} \Zz})$ is defined similarly as was  $\mathord{!K} (\RES)$ in \ref{2.5}.
One denotes by
$\EU_{\Gamma, m}$
the morphism
\begin{equation}\label{Delta}\EU_{\Gamma, m} \colon
K (\VF) \longrightarrow
\mathord{!K}  (\RES_{m^{-1} \Zz}) / ([\mathbb{A}^1] - 1)\end{equation}
obtained by composing $\GEU$ in (\ref{EU}) and
$\Delta_m$ in  (\ref{delta}).

\medskip

The following statement is straightforward:

\begin{lem}\label{fixed}Let $r$ and $n$ be integers, let
$X$ be a definable subset of $\VF^r$, let
$Y$ a definable subset of $\RES^n$. 
Assume that
$\GEU([X]) = [Y]$.
Then, for any positive integer $m$,
 $\EU_{\Gamma, m}([X])$ is the class of the subset of $ Y$
fixed by $\hat{\mu}^m$. \qed
\end{lem}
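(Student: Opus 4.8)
The plan is to unwind the definitions and reduce to an elementary analysis of the $\hat\mu$-action on $Y$ sort by sort. Since the morphism $\EU_{\Gamma, m}$ of (\ref{Delta}) is, by construction, the composite of $\GEU$ of (\ref{EU}) with $\Delta_m$ of (\ref{delta}), the hypothesis $\GEU([X]) = [Y]$ gives $\EU_{\Gamma, m}([X]) = \Delta_m([Y])$. Hence it is enough to show that $\Delta_m([Y])$ equals the class of the subset $Y^{\hat\mu^m}$ of $Y$ fixed by $\hat\mu^m$. Note first that $Y^{\hat\mu^m}$ is a definable set: $Y$ meets only finitely many of the sorts $V_\gamma$, so the Galois action on it factors through a finite quotient $\mu_N$ of $\hat\mu$, and the fixed locus of a finite group of definable automorphisms is definable. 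Moreover, as will be clear from the computation below, $Y^{\hat\mu^m}$ lies inside $\RES_{m^{-1}\Zz}$ and hence defines a class in $\mathord{!K}(\RES_{m^{-1}\Zz})/([\mathbb{A}^1]-1)$.

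I would then reduce to definable sets supported on a single tuple of sorts. Writing $V_\gamma = \{0\} \sqcup V_\gamma^{\times}$ with $V_\gamma^{\times} := V_\gamma \setminus \{0\}$ and splitting each of the finitely many coordinates of $Y$ according to whether it vanishes, we get $[Y] = \sum_j [Y_j]$ in $K_+(\RES)$, where each $Y_j$ is a definable subset of a product $\prod_{i \in S_j} V_{\gamma_i^{(j)}}^{\times}$ for some $S_j \subseteq \{1,\dots,n\}$ and rationals $\gamma_i^{(j)}$, the remaining coordinates being identically $0$. Call the index $j$ \emph{good} if $\gamma_i^{(j)} \in m^{-1}\Zz$ for every $i \in S_j$. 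By the construction of $\Delta_m$ as the projection of $K_+(\RES)$ onto $K_+(\RES_{m^{-1}\Zz})$, one has $\Delta_m([Y_j]) = [Y_j]$ for $j$ good and $\Delta_m([Y_j]) = 0$ otherwise, so $\Delta_m([Y]) = \sum_{j \text{ good}} [Y_j]$.

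The point is now to identify $\bigsqcup_{j \text{ good}} Y_j$ with $Y^{\hat\mu^m}$. By \ref{4.1}, a primitive $d$-th root of unity $\zeta$ acts on $V_{k/d}$ by multiplication by $\zeta^{k}$, so the $\hat\mu$-action on $V_{k/d}$ (with $\gcd(k,d)=1$) is faithful through $\mu_d$. Consequently $\hat\mu^m$ acts trivially on $V_{k/d}$ exactly when $d \mid m$, that is, when $k/d \in m^{-1}\Zz$; and if $d \nmid m$ the image of $\hat\mu^m$ in $\mu_d$ is a nontrivial group acting on $V_{k/d}^{\times}$ by scalar multiplications, which has no fixed point there, since in residue characteristic zero $(\eta - 1)x = 0$ with $\eta \neq 1$ forces $x = 0$. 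Applying this coordinatewise: if $j$ is good, then $\hat\mu^m$ fixes $Y_j$ pointwise (the vanishing coordinates being fixed trivially), so $Y_j \subseteq Y^{\hat\mu^m}$; if $j$ is not good, some live coordinate of $Y_j$ ranges over a $V_{\gamma_i^{(j)}}^{\times}$ with no $\hat\mu^m$-fixed point, so $Y_j \cap Y^{\hat\mu^m} = \emptyset$. Since the $Y_j$ partition $Y$, we conclude $Y^{\hat\mu^m} = \bigsqcup_{j \text{ good}} Y_j$, whose class is $\sum_{j\text{ good}}[Y_j] = \Delta_m([Y])$; as $\Delta_m$ descends to the quotients by $([\mathbb{A}^1]-1)$, the lemma follows.

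The only mildly delicate points I anticipate are the bookkeeping for the degenerate vanishing-coordinate strata and the verification that the projection $\Delta_m$ is well defined on Grothendieck classes and retains precisely the strata supported on the sorts $V_{k/m}$; both are part of the construction recalled in \ref{4.1}, and everything else is a direct verification.
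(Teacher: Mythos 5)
Your proof is correct and fills in the verification that the paper declares ``straightforward'' and marks with \qed without giving any details. Your reading of $\Delta_m$ as the projection keeping exactly the pieces of the decomposition (by vanishing/nonvanishing coordinates and by sorts) supported on $\RES_{m^{-1}\Zz}$, together with the fixed-point analysis of $\hat\mu^m$ on each $V_{k/d}^\times$ (faithful $\mu_d$-action when $\gcd(k,d)=1$, hence free unless $d\mid m$), is precisely what identifies $\Delta_m([Y])$ with $[Y^{\hat\mu^m}]$, so the argument is sound.
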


\subsection{}\label{4.2}Inside a given algebraic closure of $F\llp t \rrp$, the field $K_m = F\llp t^{1/m} \rrp$ does not depend on a particular choice of $t^{1/m}$, and $\mu_m$ acts on it.
Let $\b \in \frac{1}{m} \Zz^n \subset \G^n$, and let $X \subset \VF^n  \times \RV^{\ell}$ be a $\b$-invariant  $K$-definable set such that the projection
$X \to \VF^n $ has finite fibers.
We assume
$X$ is contained in $\VF^n \times W$ with $W$ a boundedly imaginary definable subset   of
 in $\RV^{\ell}$, and that, for every $w \in W$,  $X_w$ is a bounded.
Thus,  $X_w$ is $\b$-invariant for each $w$ in $\RV^{\ell}$, the projection of $X$ to  $\G^{\ell}$  is contained in a cube $[-\alpha, \alpha]^{\ell}$, and the projection of  $X$ to $\VF^n$ is contained in $c \Oo^n$ for some $c$.
For notational simplicity, and since this is what we will use, we shall assume $X \subset \Oo^n \times \RV^{\ell}$.

  Then the $K_m$-points $X(K_m)$ are the pullback of some subset  $X[m;\b] \subseteq  \Pi_{i=1}^n F[t^{1/m}] / t^{\b_i} \times \RV^{\ell}$; and the
   projection $X[m;\b] \to \Oo^n$ has finite fibers.

We can identify $F[t^{1/m}] / t^{N}$ with $\oplus _{0 \leq k < mN} V_{k/m} \cong \oplus _{0 \leq k < m} V_{k/m}^N$.
Also, if $Y$ is definable in  $\RV$ and $\val_{\rv}(Y) \subset [-\alpha,\alpha]$,
then
\begin{equation}Y(F\llp t^{1/m}\rrp) \subset \union \{V_\gamma:  \gamma \in m^{-1} \Zz \meet [-\alpha,\alpha]  \}.
\end{equation}
Thus $X[m; \beta]$ can be viewed as
a subset of the structure $\RES_{m^{-1} \Zz}$ (over $F$).
Here are  three ways to see it is definable. The first one is to say
it is definable in $(F\llp t^{1/m}\rrp,t)$; the induced structure on the sorts $V_{k/m}$ is the same as the structure induced
from $\ACVF$.
The second one is to remark that after finitely many invariant admissible transformations, $X$ becomes a set in standard form, a pullback from $\RV$.
These operations induce quantifier-free definable maps on the sets $X[m;\beta]$; so it suffices to take $X$ in standard form, and then
the statement is clear. Thirdly, in the structure $F \llp t \rrp^{\rm alg}$ with a distinguished predicate for $F$, it is clear that $F \llp t^{1/m} \rrp$ is definable
and so $X(F \llp t^{1/m} \rrp)$ is definable; and here too (cf. \cite{HK3}, Lemma 6.3) the induced structure on $F$ is just the field structure, and 
the induced structure on the sorts $V_{k/m}$ is the same as the structure induced
from $\ACVF$.



\begin{lem} Let $X$ be as above and let 
 $\b' $
 in $\frac{1}{m} \mathbb{Z}^n$, with $\b_i \leq \b'_i$, for every $1 \leq i \leq n$. 
 Then $[X[m;\b']]=[X[m;\b]]\times [\Aa^{n m \sum_i (\b'_i - \b_i)}]$
 in $\mathord{K}(\RES_{m^{-1} \Zz})$.
\end{lem}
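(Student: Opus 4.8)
The plan is to exhibit $X[m;\b']$ as the product of $X[m;\b]$ with an affine space over $\RES_{m^{-1}\Zz}$, exploiting that $X$ is automatically $\b'$-invariant. Since $\b_i\le\b'_i$ we have $\Oo\b'_i\subseteq\Oo\b_i$, so every $\b$-invariant set is $\b'$-invariant and $X[m;\b']$ is defined. I would first recall from the discussion above that $X(K_m)$ is the pullback of $X[m;\b]$ along the truncation map $\pi_\b\colon\Oo_{K_m}^n\to\prod_{i=1}^n F[t^{1/m}]/t^{\b_i}$ (and the identity on the $\RV$-coordinates), and equally the pullback of $X[m;\b']$ along $\pi_{\b'}$. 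Writing $\pi_\b=q\circ\pi_{\b'}$, where $q\colon\prod_i F[t^{1/m}]/t^{\b'_i}\to\prod_i F[t^{1/m}]/t^{\b_i}$ is the obvious coordinate-forgetting surjection, surjectivity of $\pi_{\b'}$ forces $X[m;\b']=q^{-1}(X[m;\b])$. Thus everything reduces to a description of the fibres of $q$.

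Next I would analyse $q$, which is the product over $i$ of the truncation maps $F[t^{1/m}]/t^{\b'_i}\to F[t^{1/m}]/t^{\b_i}$, i.e. of the maps forgetting the monomials $t^{k/m}$ with $m\b_i\le k<m\b'_i$. Under the identification $F[t^{1/m}]/t^{N}\cong\bigoplus_{0\le k<mN}V_{k/m}$ each such map is a coordinate projection, and splitting off the forgotten coordinates gives a decomposition
\[
F[t^{1/m}]/t^{\b'_i}\;\cong\;\bigl(F[t^{1/m}]/t^{\b_i}\bigr)\times\bigoplus_{m\b_i\le k<m\b'_i}V_{k/m}.
\]
The two things to check — and I expect these to be the only points requiring attention — are that this splitting is a legitimate isomorphism of $\RES_{m^{-1}\Zz}$-definable sets, and that it is $\hat\mu$-equivariant. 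The first is immediate from any of the three descriptions of the induced structure on the sorts $V_{k/m}$ given in \ref{4.2}: it is merely a partition of a coordinate tuple. The second holds because the $\hat\mu$-action is diagonal in the monomial basis $(t^{k/m})_k$, hence preserves every partition of the monomials. Taking the product over $i$ and intersecting with $X[m;\b']=q^{-1}(X[m;\b])$ then yields a definable isomorphism
\[
X[m;\b']\;\cong\;X[m;\b]\times\prod_{i=1}^n\;\bigoplus_{m\b_i\le k<m\b'_i}V_{k/m}
\]
in $\RES_{m^{-1}\Zz}$, whence $[X[m;\b']]=[X[m;\b]]\cdot\prod_{i=1}^n\prod_{m\b_i\le k<m\b'_i}[V_{k/m}]$ in $\mathord{!K}(\RES_{m^{-1}\Zz})$.

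Finally I would conclude by counting together with the relation $[V_{k/m}]=[\Aa^1]$, which holds in $\mathord{!K}(\RES_{m^{-1}\Zz})$ essentially by definition — it is among the relations defining the ideal $I!$ of \ref{2.5} — and is, conceptually, the affine-bundle relation of $K^{\hat\mu}(\Var)$ transported through the isomorphism $\mathord{!K}(\RES)\simeq K^{\hat\mu}(\Var)$. The total number of monomials $t^{k/m}$ forgotten is $\sum_{i=1}^n(m\b'_i-m\b_i)=m\sum_{i=1}^n(\b'_i-\b_i)$, so the product of the classes $[V_{k/m}]$ equals $[\Aa^1]^{m\sum_i(\b'_i-\b_i)}=[\Aa^{m\sum_i(\b'_i-\b_i)}]$, which is the asserted identity. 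The whole argument is elementary; the only place where I would be careful is verifying that the coordinate splitting is a legitimate $\hat\mu$-equivariant morphism of $\RES_{m^{-1}\Zz}$-definable sets, for which the structural description recalled in \ref{4.2} is precisely what is needed.
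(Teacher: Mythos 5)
Your proof is correct and, in my reading, genuinely simpler than the paper's. The paper reduces to the case where $\b'$ differs from $\b$ in a single coordinate by $\frac1m$, invokes Hilbert 90 (working over the parameter $t^{1/m}$) to obtain a constructible section of the projection $X[m;\b']\to X[m;\b]$, and then averages the $\mu_m$-conjugates to make the section $\mu_m$-invariant, hence $F\llp t\rrp$-definable. You observe instead that the truncation map $q$ already has a canonical section, namely inclusion of the low-degree part of the monomial expansion, and that this section is automatically $\mu_m$-equivariant because $\mu_m$ acts diagonally on the monomials $t^{k/m}$; this makes both Hilbert 90 and the averaging step superfluous, and also dispenses with the reduction to a single coordinate. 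Both arguments arrive at the identical equivariant isomorphism $X[m;\b']\cong X[m;\b]\times\prod_i\bigoplus_{m\b_i\le k<m\b'_i}V_{k/m}$, after which the affine-space class appears only upon invoking the relation $[V_{k/m}]=[\Aa^1]$.

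One point of disagreement with the statement as printed, which your write-up in fact handles more carefully than the paper's: the relation $[V_{k/m}]=[\Aa^1]$ does not hold in $K(\RES_{m^{-1}\Zz})$ itself (for $k\not\equiv 0\pmod m$ the two have non-isomorphic $\mu_m$-fixed loci, so no $F\llp t\rrp$-definable bijection exists), but only after passing to $\mathord{!K}(\RES_{m^{-1}\Zz})$. The paper's proof has exactly the same gap — the $\mu_m$-invariant section it constructs trivializes the bundle with fibre $V_{k/m}$, not $\Aa^1$ — so the lemma's assertion should really be read in $\mathord{!K}(\RES_{m^{-1}\Zz})$, as you wrote it, which is all that is used later (both $\widetilde X[m]$ and $X[m]$ are ultimately manipulated in $\mathord{!K}$ or its quotients). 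Your dimension count $m\sum_i(\b'_i-\b_i)$ is also the correct one; the factor $n$ in the displayed exponent of the statement is inconsistent with the normalization $[X[m;\b]]/[\Aa^{m\sum_i\b_i-n}]$ used immediately afterwards, and is evidently a misprint.
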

\begin{proof}We shall  may assume $\b$ differs from $\b'$ in one coordinate, say the first, and that $\b'_1 = \b_1+ \frac{1}{m} $.
Consider the projection $X[m,\b'] \to X[m,\b]$.  Working over a parameter $t^{1/m}$, this is a morphism of $\mathrm{ACF}$-constructible sets, whose fibers
are $\Aa^1(\kk)$-torsors; so by Hilbert 90, there exists a constructible section.  Now this section may not be $\mu_m$-invariant, 
but after averaging the $\mu_m$-conjugates one finds a $\mu_m$-invariant section, which is $F \llp t \rrp$-definable.
It follows that $X[m;\b']=X[m;\b] \times \Aa^1$, as required.  
 \end{proof}

Thus the class of 
$[X[m;\b]]/ [\Aa^{\tcb{m (\sum _i \b_i) - n}}]$  in the localization $\mathord{K}(\RES_{m^{-1} \Zz})[[\Aa^1]^{-1}]$ does not depend on $\b$; let us 
denote it by
$\widetilde{X} [m]$.

We also denote by $X[m]$
the image of $[X[m;\b]]$ in $\mathord{!K}(\RES_{m^{-1} \Zz})/ ([\Aa^1] - 1)$, or in
$\mathord{!K}(\RES)/ ([\Aa^1] - 1)$,
which does not depend on $\b$.


Let $X$ be as before and let $f\colon X \to Y$ be a $\b$-invariant admissible bijection  in  $\mC(\b)$.
Since
    $f$ induces a bijection between $X[m;\b]$ and $Y[m;\b]$, it follows that $\widetilde{X}[m]=\widetilde{Y}[m]$ and $X[m]=Y[m]$.

\begin{prop}\label{3}
  Let $X$ be a $\b$-invariant $F \llp t \rrp$-definable subset of $\Oo^n \times \RV^{\ell}$, for some $\beta$.
Assume the projection $X \to \VF^n$ has finite fibers.
Then, for every $m \geq 1$,   $\EU_{\Gamma, m} (X) = X[m]$ as classes in $\mathord{!K}(\RES_{m^{-1} \Zz}) /
([\mathbb{A}^1] - 1)$.
\end{prop}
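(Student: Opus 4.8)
The plan is to reduce, by means of Proposition \ref{2}, to the case where $X$ is \emph{elementary}, to compute $\GEU([X])$ and $X[m]$ explicitly on such a set, and to match the two answers through Lemma \ref{fixed}. The first step is to record that both $X \mapsto \EU_{\Gamma, m}([X])$ and $X \mapsto X[m]$ are additive for disjoint unions of objects of $\mC_F$ and invariant under $\b$-invariant admissible transformations: for $\EU_{\Gamma,m}=\Delta_m\circ\GEU$ this is clear, since $\GEU$ factors through $K(\VF)$ and an admissible transformation restricts to a definable bijection; for $X\mapsto X[m]$ it is the remark made just before the statement (an admissible bijection induces a bijection between the sets $X[m;\b]$) together with $X(K_m)=X_1(K_m)\sqcup X_2(K_m)$. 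Decomposing $X$ into the atoms of the Boolean algebra provided by Proposition \ref{2} and running a routine induction (on $\dim X$ and on $n$, in the spirit of the proof of \cite{HK}, Proposition 4.5, using that the elementary pieces become, after a common admissible transformation, pullbacks from $\RV$, and that such pullbacks are stable under the Boolean operations that occur), one reduces to the case where $X$ is itself elementary. Applying the transformation $T$ of Definition \ref{4def}(4) and the invariance just recorded, one may then assume
\[
X=\mathbb{L}(H,f)=\{(a,b)\in\Oo^n\times H\ :\ \rv(a_i)=b_{h(i)},\ 1\le i\le n\},
\]
with $H\subset\RV^{m'}$ boundedly imaginary, $f\colon b\mapsto(b_{h(i)})_i$ finite-to-one on $H$ (last assertion of Proposition \ref{2}), and $\b_i$ chosen above the relevant valuation bounds, so that $X$ is genuinely $\b$-invariant.

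On such a set both invariants are computed by hand. Since $X=\mathbb{L}(H,f)$ one has $\oint([X])=[(H,f)]_n$ in $K(\RV[*])/I_{\sp}$, hence $\GEU([X])=E([(H,f)]_n)$; writing $[H]$ through the decomposition $K(\RV[*])\simeq K(\RES[*])\otimes_{K(\G^{\rm fin}[*])}K(\G[*])$ and using that $E$ sends $[Z]_k$ to $[Z]_k$ for $Z\in\RES$, while it sends $[\RV^{>0}]_1$ and, more generally, the contributions of positive-dimensional $\G$-directions (all of them multiples of $[\mathbb{G}_m]$) to $0$, one obtains $\GEU([X])=[Y]$ for a concrete definable subset $Y$ of $\RES^{*}$, namely the twisted residue-field set sitting over the finite part of the value-group image of $H$, equipped with the $\hat\mu$-action coming from the torsor structure of the sorts $V_\gamma$. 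On the other hand, for $\b\in\tfrac1m\Zz^n$ above the bounds, the reduction $X(K_m)\bmod\Oo\b$ is computed fibrewise over $H(K_m)$: $\rv^{-1}(b_{h(i)})$ is an open ball of radius $\val_{\rv}(b_{h(i)})<\b_i$, whose image in $\Oo_{K_m}/t^{\b_i}$ is an affine space over $F$; stratifying $H(K_m)$ into pieces on which these radii are locally constant, $X[m;\b]$ becomes a union of affine bundles over locally closed pieces of $H(K_m)$, so that, powers of $[\mathbb{A}^1]$ being trivial in the target, its class equals $[H(K_m)]$, which represents the $\hat\mu^m$-fixed part of $Y$. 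Combining $\GEU([X])=[Y]$ with Lemma \ref{fixed} then yields $\EU_{\Gamma,m}([X])=[Y^{\hat\mu^m}]=X[m]$.

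I expect the main obstacle to be the bookkeeping on the value-group side of the elementary computation: one must verify that the $\G$-direction contributions to $\GEU([X])$ (which are multiples of $[\mathbb{G}_m]$ and hence vanish in the target) and the sum over affine-space fibre dimensions on the $X[m;\b]$ side (which collapses once $[\mathbb{A}^1]$ is set equal to $1$) cancel correctly against one another, while the genuinely twisted residue pieces match on the nose — it is here that the identification of $\mathcal E_n$ with $\mathcal E'_n$, equivalently of $\chi$ with the bounded $\chi'$, modulo $[\mathbb{A}^1]-1$ enters, and that the $[\mathbb{A}^1]$-power normalisation in the definition of $X[m]$ is used. The Boolean-combination step of the reduction, although routine, also requires some care to keep every intermediate set inside $\mC_F$.
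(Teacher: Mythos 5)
Your proposal tracks the paper's own argument very closely: you reduce to the elementary case via Proposition \ref{2}, push the class through the decomposition $K(\RV[*])\simeq K(\RES[*])\otimes_{K(\G^{\rm fin}[*])}K(\G[*])$, observe that the $\Gamma$-directional contributions vanish modulo $[\mathbb{A}^1]-1$ for both invariants, and match the surviving $\RES$-part by the intersection-with-$\RES_{m^{-1}\Zz}$ interpretation. The only difference in organization is that the paper carries out the last reduction explicitly to products of a single lift from $\G$ and a single lift from $\RES$ (by induction on dimension) and verifies the two cases directly, whereas you compute both $\GEU([X])$ and $X[m]$ for the whole elementary $X$ at once and then invoke Lemma \ref{fixed} to pass from $\GEU$ to $\EU_{\Gamma,m}$; this is a legitimate but essentially cosmetic repackaging, since Lemma \ref{fixed} encodes precisely the observation "on $\RES$, $\EU_{\Gamma,m}$ corresponds to intersection with $\RES_{m^{-1}\Zz}$" that the paper uses. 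One small caution: the phrase "the elementary pieces become, after a common admissible transformation, pullbacks from $\RV$" is stronger than what Proposition \ref{2} gives (each piece has its own transformation); the reduction from a Boolean combination to a single elementary piece should instead be made via additivity of both invariants and inclusion--exclusion, which is what the paper does implicitly.
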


\begin{proof}Since both sides are invariant under the transformations of Proposition \ref{2},
we may assume by Proposition \ref{2} that
there exists a
definable boundedly imaginary
subset $H$ of $\RV^{\ell'}$
 and a map $h \colon \{1, \ldots, n \} \to\{1, \ldots, \ell' \}$
 such that
 \begin{equation}
 X = \{(a, b); b \in H, \rv (a_i) = b_{h (i)}, 1 \leq i \leq n \}
 \end{equation}
and the map  $r \colon H \to \RV^{n}$ given by $b \mapsto (b_{h(1)}, \ldots, b_{h (n)})$ is finite to one.
According to (\ref{deco}) we may assume that the class $[(H, r)]$ is equal to $\Psi ([W] \otimes [\Delta])$
with  $W$ in $\RES [\ell]$ and
$\Delta$ bounded in $\Gamma [n - \ell]$.
By induction on dimension and considering products, it is  enough to prove the result when $X$ is the lifting of an object of $\Gamma$ or  $\RES$.
Let us prove that the image of the canonical lift from $\G$
vanishes for both invariants.  In the case of $\EU_{\Gamma, m}$, the lift of any $Z \subset \G^q$, $q \geq 1$,
to $K(\RV)$  vanishes modulo $[\mathbb{A}^1]-1$.  In the case of $X[m]$, finitely many points of the value group of $K_m$
in the cube $[0,N]^n$ lie in $Z$; again for each such point, the class of $\mathord{!K}(\RES)$ lying above it is divisible by $[\mathbb{A}^1]-1$.  On the other hand
on $\RES$, both $\EU_{\Gamma, m}$ and $X[m]$ correspond to intersection with $\RES_{m^{-1} \Zz}$.   \end{proof}


\begin{cor}\label{4} Let $X$ be a smooth  variety over $F$, $f$ a regular function on $X$ and $x$ a closed point of
$f^{-1} (0)$. Let $\pi$ denote the reduction map $X (\Oo) \to X (F)$.
Let \[X_{t, x}= \{y \in X(\Oo); f(y)=t  \,\, \mathrm{and} \,\, \pi (y) = x \}\] and let  \[\mathcal{X}_x  = \{y \in X (\Oo); \rv f(y) = \rv(t) \,\, \mathrm{and} \,\, \pi (y) = x \}.\]  Then $\mathcal{X}_x$ is
 $\beta$-invariant for $\beta >0$, and, for every $m \geq 1$,
  $\EU_{\Gamma, m}(X_{t, x}) =\mathcal{X}_x[m]$ as classes in $\mathord{!K}(\RES) / ([\mathbb{A}^1] - 1)$. \end{cor}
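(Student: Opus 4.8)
The plan is to deduce the corollary from Proposition \ref{3} by checking its hypotheses apply after a harmless modification, and by tracking how the ``$t$'' versus ``$\rv(t)$'' conditions differ. First I would observe that $\mathcal{X}_x$ is visibly $\beta$-invariant for any $\beta > 0$: the condition $\rv f(y) = \rv(t)$ and $\pi(y) = x$ only constrains $y$ modulo higher-order terms once one is inside the maximal ideal, and since $x$ is a closed point, working in a local chart embeds $\mathcal{X}_x$ into $\mathcal{M}^n$ (or a bounded set), where the defining conditions factor through $\VF^n/\mathcal{O}\beta$ for suitable $\beta > 0$; one should fix such a chart and a $\beta = (\beta, \ldots, \beta)$ with $\beta > 0$ so that $\mathcal{X}_x$ becomes a $\beta$-invariant definable subset of $\mathcal{O}^n \times \RV^{\ell}$ with the projection to $\VF^n$ having finite fibers (indeed the fibers are single points, cut out by $\rv f(y) = \rv(t)$). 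Then Proposition \ref{3} applies directly to $\mathcal{X}_x$ and gives $\EU_{\Gamma,m}(\mathcal{X}_x) = \mathcal{X}_x[m]$.

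Next I would compare $X_{t,x}$ with $\mathcal{X}_x$. The set $\mathcal{X}_x$ is the disjoint union, over $u \in 1 + \mathcal{M}$ modulo the relevant invariance, of the fibers $\{y : f(y) = ut\}$; more usefully, multiplication by a unit identifies $\{f(y) = ut\}$ with $\{f(y) = t\}$ only after a base change, so the cleanest route is to note that $\mathcal{X}_x = \mathbb{L}([X_{t,x}]_{\text{something}})$ in the sense of \ref{2.4}, i.e.\ $\mathcal{X}_x$ is the $\RV$-lift of $X_{t,x}$: writing $\rv f(y) = \rv(t)$ as $f(y) \in \rv^{-1}(\rv t)$, the set $\mathcal{X}_x$ fibers over $X_{t,x}$ with fiber an open ball $t(1 + \mathcal{M})$, i.e.\ $\mathcal{X}_x$ differs from $X_{t,x}$ by a factor that $\GEU$ (hence $\EU_{\Gamma,m}$) kills, being $[\RV^{>0}]_1$-type. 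Concretely, $[\mathcal{X}_x] = [X_{t,x}] \cdot [1]_1$ as classes feeding into $\oint$, and under $E$ the class $[1]_1$ and $[\RV^{>0}]_1$ and $[1]_0$ are all identified modulo $([\mathbb{A}^1]-1)$ — this is exactly the content of $E$ mapping $[\RV^{>0}]_1$ to $0$ recorded in \ref{2.5}. Hence $\EU_{\Gamma,m}(X_{t,x}) = \EU_{\Gamma,m}(\mathcal{X}_x)$.

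Combining the two steps: $\EU_{\Gamma,m}(X_{t,x}) = \EU_{\Gamma,m}(\mathcal{X}_x) = \mathcal{X}_x[m]$ in $\mathord{!K}(\RES)/([\mathbb{A}^1]-1)$, which is the assertion. I expect the main obstacle to be the bookkeeping in the second step — making precise the sense in which $\mathcal{X}_x$ is the $\mathbb{L}$-lift of $X_{t,x}$ and that the correction factor lands in the kernel of $E$ — since one must be careful that $X_{t,x}$ itself is not $\beta$-invariant (the condition $f(y) = t$ exactly, not up to $1 + \mathcal{M}$, is not an invariant condition), so Proposition \ref{3} cannot be applied to $X_{t,x}$ directly and the passage through $\mathcal{X}_x$ is essential. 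A secondary point requiring care is that $\GEU$ is only defined on $K(\VF)$ of honest $\VF$-definable sets of finite dimension; one should check $X_{t,x}$ and $\mathcal{X}_x$ qualify, which they do since they sit inside $X(\mathcal{O}) \subset \VF^n$ locally and are definable.
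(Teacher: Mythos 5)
Your plan follows the same decomposition as the paper: apply Proposition \ref{3} to $\mathcal{X}_x$ (correctly noting that $X_{t,x}$ itself is not $\beta$-invariant, so one must pass through $\mathcal{X}_x$), and reduce to showing $\GEU(X_{t,x}) = \GEU(\mathcal{X}_x)$ by exhibiting an open-ball factor that $E$ kills after $\oint$. But the key step—the factorization $\oint([\mathcal{X}_x]) = \oint([X_{t,x}]) \cdot [1]_1$—is asserted rather than established. You correctly observe $\mathcal{X}_x = \bigsqcup_{t' : \rv(t')=\rv(t)} X_{t',x}$, but there is no definable map $\mathcal{X}_x \to X_{t,x}$: the map $y \mapsto f(y)$ fibers $\mathcal{X}_x$ over the ball $t(1+\Mm)$ with fiber $X_{t',x}$ over $t'$, and these fibers are \emph{not} uniformly identified with $X_{t,x}$ over the base $F\llp t\rrp$. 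You notice this yourself ("multiplication by a unit identifies $\ldots$ only after a base change"), and the $\mathbb{L}$-lift framing does not rescue it, since $X_{t,x}$ is a $\VF$-object, not an $\RV$-object of the kind \ref{2.4} lifts. The missing ingredient, which the paper supplies, is: for each such $t'$ there is an automorphism over $F$ fixing $\RV$ pointwise and sending $t \mapsto t'$; this immediately gives $\oint[X_{t',x}] = \oint[X_{t,x}]$ and hence that the fiber class under $\oint$ is constant, from which $\oint([\mathcal{X}_x]) = \oint([X_{t,x}]) \cdot [1]_1$ follows. Without this automorphism argument your factorization is unsubstantiated.

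A minor secondary point: $E$ does not identify $[1]_1$, $[1]_0$, and $[\RV^{>0}]_1$; it sends $[\RV^{>0}]_1$ to $0$ and, via the congruence $I_{\sp}$, sends $[1]_1$ and $[1]_0$ both to $1$. The conclusion you want, that multiplying by $[1]_1$ does not change the image under $\GEU$, is right, but the phrasing should be that $[1]_1 \sim [1]_0 + [\RV^{>0}]_1$ modulo $I_{\sp}$, and $E$ annihilates the $[\RV^{>0}]_1$ part.
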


\begin{proof}  The $\b$-invariance of $\mathcal{X}_x$ is
clear.
 Consider the canonical morphism
\[\oint\colon K_+ (\VF) \longrightarrow K_+ (\RV [*]) / I_{\sp}\] of (\ref{mainiso}).
For any $t'$ with $\rv(t')=\rv(t)$,  there is an automorphism over $F$ fixing $\RV$ that
sends $t$ to $t'$, thus
\[\oint [X_{t', x}] = \oint [X_{t, x}].\]
It follows that
\begin{equation}\oint([\mathcal{X}_x]) = \oint [ \union\{X_{t', x}: \rv(t')=\rv(t) \}] = (\oint [X_{t, x}])  \cdot e,
\end{equation}
where $e$ is the class of an open ball,
i.e. $e=[1]_1$.  Applying $\GEU$ we find that
$\GEU(\mathcal{X}_x) = \GEU(X_{t, x})$, and the statement follows from Proposition \ref{3}.
\end{proof}

\subsection{}\label{3.4}
Let $X$ be a quasi-projective variety over $F$. We say a $\hat \mu$-action is \emph{good} if it factorizes through some $\mu_n$-action, for some $n \geq 1$.
We denote by $K_+^{\flat, \hat \mu} (\Var_F)$ the quotient of the abelian monoid 
generated by isomorphism classes of  quasi-projective varieties over $F$
with a good $\hat \mu$-action by the standard cut and paste relations. We denote 
by $K_+^{\hat \mu} (\Var_F)$  the Grothendieck semi-ring of $F$-varieties with $\hat \mu$-action as considered in \cite{barc} and \cite{seattle}. It is the quotient of
$K_+^{\flat, \hat \mu} (\Var_F)$
by the following additional relations:  
for every quasi-projective $F$-variety $X$ with good $\hat \mu$-action, for every finite dimensional $F$-vector space $V$ endowed with two good linear actions
$\varrho$ and $\varrho'$, the class of $X \times (V, \varrho)$ is equal to the class of $X \times (V, \varrho')$.
We denote by $K^{\hat \mu} (\Var_F)$ the corresponding Grothendieck ring.

For any $s \in \Qq_{>0}$, let
$t_{s} \in F\llp t \rrp^{\rm alg}$ such that
$t_1 = t$ and
$t_{as} = t^a_s$ for any $s$ and any $a \geq 1$.
Set ${\bf{t}}_{k/m} = \rv (t_{k/m} ) \in V_{k/m}$.
Let $X$ be an  $F\llp t \rrp$-definable set in the generalized residue structure $\RES$.
Thus, for some $n \geq 0$, $X$ is an  $F\llp t \rrp$-definable subset of $\RV^n$
whose image in $\G^n$ under $\val_{\rv}$ is finite.
When the image is a single point, there exists
a positive integer  $m$ and integers   $k_i$, $1 \leq i \leq n$, such that $X$ is 
 an $F\llp t \rrp$-definable subset of 
$\prod_{1 \leq i \leq n} V_{k_i /m}$.
The $\hat \mu$-action on $X$ factors through a $\mu_m$-action.
The image $\Theta (X)$ of the set $X$ by the $F\llp t^{1/m}\rrp$-definable function $g(x_1, \ldots, x_n) =
(x_1/ {\bf{t}}_{k_1 /m}, \ldots,x_n/ {\bf{t}}_{k_n /m})$ is an $F$-definable subset of $\kk^n$
which is endowed with  a $\mu_m$-action coming from the one on $X$. 
In general, the set $X$  is a disjoint union of definable subsets $X_j$ of the previous type.
Since an $F$-definable subset of $\kk^n$
is nothing but a constructible subset of $\mathbb{A}^n_F$,
there is a unique morphism of  semi-rings
\begin{equation}\label{4.3.1}
\Theta \colon K_+  (\RES) \longrightarrow K_+^{\flat, \hat \mu} (\Var_F)
\end{equation}
such that, for every $F\llp t \rrp$-definable set $X$ in the structure $\RES$,
\begin{equation}
\Theta ([X]) = \sum_j [\Theta (X_j)].
\end{equation}
One derives from (\ref{4.3.1}) a ring morphism
\begin{equation}\label{4.3.2}
\Theta \colon \, \mathord{!K}(\RES) \longrightarrow \,  K^{\hat \mu} (\Var_F).
\end{equation}
We shall also consider the morphism
\begin{equation}\label{4.3.3}
\Theta_0 \colon \, \mathord{!K}(\RES) \longrightarrow \,  K (\Var_F)
\end{equation}
obtained by composing 
the morphism (\ref{4.3.2}) with the
morphism $K^{\hat \mu} (\Var_F) \to K (\Var_F)$ induced by forgetting the $\hat \mu$-action.

\begin{prop}\label{Theta}
The morphisms \textup{(\ref{4.3.1})} and \textup{(\ref{4.3.2})}  are isomorphisms.
\end{prop}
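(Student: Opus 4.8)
The plan is to construct an inverse to $\Theta$ and check it is well-defined and two-sided. The key point is that the map $g(x_1,\dots,x_n) = (x_1/{\bf t}_{k_1/m},\dots,x_n/{\bf t}_{k_n/m})$ from $\prod_i V_{k_i/m}$ to $\kk^n$ is an $F\llp t^{1/m}\rrp$-definable bijection, so every constructible subset $S$ of $\Aa^n_F$ with a $\mu_m$-action pulls back to an $F\llp t\rrp$-definable subset $g^{-1}(S)$ of $\prod_i V_{k_i/m}$ precisely when the given $\mu_m$-action on $S$ matches the one transported through $g$ from the Galois action on the $V_{k_i/m}$; since any good $\mu_m$-action on a vector space $V$ over $F$ is a linear action, and any linear $\mu_m$-action is, after a choice of basis, a direct sum of scalar actions by powers of a fixed primitive root of unity, one sees that the collection of $F$-constructible sets with good $\hat\mu$-action arising this way is exactly all of them. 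Concretely, I would define $\Xi \colon K_+^{\flat,\hat\mu}(\Var_F) \to K_+(\RES)$ on generators: given $(S,\varrho)$ with $\varrho$ a good $\mu_m$-action, diagonalize $\varrho$ (working over $F$, algebraically closed of characteristic $0$) so that the ambient $\Aa^n_F$ decomposes with the $i$-th coordinate acted on by $\zeta \mapsto \zeta^{k_i}$, identify $\Aa^n_F$ with $\prod_i V_{k_i/m}$ via the $g$-coordinates, and let $\Xi(S,\varrho) = [g^{-1}(S)]$.

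The first thing I would verify is that $\Xi$ is well-defined: it must not depend on the choice of $m$ with $\mu_m \supseteq$ image of $\varrho$ (clear, since replacing $m$ by $mr$ rescales via the definable bijection $V_{k/m}\to V_{kr/(mr)}$ by $\rv(t_{1/(mr)})$-type multiplication, compatibly), nor on the diagonalizing basis (two such bases differ by an element of the centralizer of $\varrho$ in $\mathrm{GL}_n(F)$, which acts $F$-definably on $\prod_i V_{k_i/m}$ and hence gives an $\RES$-isomorphism of the pullbacks), and it must respect cut-and-paste (immediate, as $g^{-1}$ preserves disjoint unions and complements). Then $\Theta\circ\Xi = \mathrm{id}$ and $\Xi\circ\Theta = \mathrm{id}$ on generators follow directly from the definitions together with the observation, already used in the excerpt, that the induced structure on the sorts $V_{k/m}$ from $\ACVF$ is just the $F$-constructible structure (so that an $F\llp t\rrp$-definable subset of $\prod_i V_{k_i/m}$ is, after applying $g$, precisely an $F$-constructible subset of $\Aa^n_F$). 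This settles that \textup{(\ref{4.3.1})} is an isomorphism of semi-rings; multiplicativity of $\Xi$ is a matter of checking the product of a $\mu_m$-set and a $\mu_{m'}$-set corresponds to the product in $\RES$ over a common refinement $\mathrm{lcm}(m,m')$, which is routine.

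For \textup{(\ref{4.3.2})}, I would pass to the quotients: on the left, $\mathord{!K}(\RES)$ is $K(\RES)$ modulo the relations $[\val_{\rv}^{-1}(a)]=[\val_{\rv}^{-1}(0)]$ for $a\in\G(L_0)\otimes\Qq$, which under $\Theta$ become the relations $[(V,\varrho)]=[(V,\varrho')]$ for the various scalar actions on the affine line, i.e. the defining relations of $K^{\hat\mu}(\Var_F)$ on one-dimensional vector spaces; conversely, the general relation in $K^{\hat\mu}(\Var_F)$ identifying $X\times(V,\varrho)$ with $X\times(V,\varrho')$ is generated (over cut-and-paste) by these one-dimensional ones after diagonalizing, so $\Xi$ descends. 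Hence the semi-ring isomorphism \textup{(\ref{4.3.1})} induces the ring isomorphism \textup{(\ref{4.3.2})}, and by groupification one gets the statement for the Grothendieck rings as well. The main obstacle I anticipate is the careful bookkeeping in the well-definedness of $\Xi$ — specifically, checking that diagonalization of a good $\mu_m$-action over $F$ is unique enough (up to the centralizer) that the pullback class is genuinely independent of all choices, and that this is compatible with passing to larger $m$; once that is nailed down, everything else is formal.
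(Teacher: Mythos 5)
Your proposal takes a genuinely different route from the paper, and there is a real gap in it.

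The paper proves injectivity and surjectivity separately. For injectivity it uses the Galois descent device: a $\mu_m$-equivariant isomorphism $\Theta(X') \to \Theta(X)$ conjugates, via the maps $g, g'$, to an $F\llp t^{1/m}\rrp$-definable bijection $X' \to X$ that is invariant under $\mathrm{Gal}(F\llp t^{1/m}\rrp / F\llp t\rrp)$, hence is $F\llp t\rrp$-definable. For surjectivity it uses Kummer theory: given an irreducible quasi-projective $V$ with an effective $\mu_m$-action and quotient $U = V/\mu_m$, one picks $f \in F(U)$ with $F(V) = F(U)(f^{1/m})$ and, after shrinking, writes $V \cong \{(u,z) : f(u) = z^m\}$ with the standard $\mu_m$-action on $z$; the set $W = \{(u,z) \in U \times V_{1/m} : f(u) = tz^m\}$ then has $\Theta(W) = V$, and an induction on dimension finishes. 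You instead propose to build an explicit inverse $\Xi$ by pulling back $F$-constructible sets along $g^{-1}$.

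The gap is in your treatment of surjectivity (and, relatedly, well-definedness of $\Xi$). You take for granted that every quasi-projective $F$-variety $S$ with a good $\mu_m$-action arises as a constructible subset of some $\Aa^n_F$ on which $\mu_m$ acts by a diagonal linear action, i.e. that every generator $(S, \varrho)$ of $K_+^{\flat,\hat\mu}(\Var_F)$ admits a $\mu_m$-equivariant embedding into an affine space with a linear $\mu_m$-action — this is the substance of surjectivity, not a free observation. It can be established (stratify $S$ into $\mu_m$-stable affine pieces, then use linear reductivity of $\mu_m$ in characteristic zero to find equivariant generators of the coordinate ring), but you do not do this, and it is precisely the step the paper circumvents with the Kummer-theoretic construction. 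Moreover, your well-definedness check for $\Xi$ only covers two diagonalizing bases of the \emph{same} ambient $\Aa^n_F$, invoking the centralizer of $\varrho$ in $\mathrm{GL}_n(F)$; it does not address two different $\mu_m$-equivariant embeddings of $S$ into possibly different affine spaces. To handle that case you would need exactly the Galois-invariance argument the paper uses for injectivity (a $\mu_m$-equivariant $F$-constructible isomorphism between the images, conjugated by $g$ and $g'$, is $F\llp t^{1/m}\rrp$-definable and Galois-invariant, hence $F\llp t\rrp$-definable), which you do not spell out. Once those two points are filled in, the argument would work and would be a reasonable alternative to the paper's, but as written it assumes away the nontrivial content of both surjectivity and well-definedness.
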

\begin{proof}  
Let us prove that   (\ref{4.3.1}) is   injective.
Let $X$ and $X'$ be respectively 
$F\llp t \rrp$-definable subsets of $\RV^n$ and $\RV^{n'}$ 
whose respective images in $\G^n$ and $\G^{n'}$ under $\val_{\rv}$ are a single point
and choose a positive integer $m$ such that 
the $\hat \mu$-action on $X$ and $X'$ factors through a $\mu_m$-action. 
Consider the corresponding
$F\llp t^{1/m}\rrp$-definable functions $g$ and $g'$.
Let $f$ be a $\hat \mu_m$-invariant isomorphism
between $g'(X')$ and $g(X)$.
Then $g \inv  \circ f \circ g': X' \to X$ is an $F\llp t^{1/m}\rrp$-definable bijection $X' \to X$, which moreover is invariant
under the Galois group of $F\llp t^{1/m}\rrp/ F\llp t \rrp$ hence is an $F\llp t \rrp$-definable bijection.
In general, when the  images of $X$ and $X'$ under $\val_{\rv}$  are only supposed to be finite, if
$\Theta ([X]) = \Theta ([X'])$, 
one can write $X$  and $X'$ as  disjoint union of definable subsets $X_j$ and $X'_j$ of the previous type, $1 \leq j \leq r$, such that for all $j$,
$\Theta (X_j) = \Theta (X'_j)$, and injectivity of (\ref{4.3.1}) follows.

For surjectivity, by induction on dimension, it is enough 
to  prove that, for $ m\geq 1$, if $V$ is an irreducible quasi-projective variety over $F$ endowed with
a $\mu_m$-action,  then there exists an $F\llp t \rrp$-definable set $W$ over $\RES$ such that
$\Theta (W)$ is a  dense subset of $V$. We may assume, by partitioning, that the kernel of the action is constant, so that the action is equivalent to an effective $\mu_{m'}$-action
for some $m' | m$, and for notational simplicity we take $m=m'$.
Set $U = V / \mu_m$. By Kummer theory there exists $f \in F (U)$ such that
$F (V) = F (U) (f^{1/m})$. Up to shrinking $V$, we may assume $f$ is regular and does not vanish on $U$. 
It follows that $V$ is isomorphic to the closed set
$V^\ast = \{(u, z) \in U \times \mathbb{G}_m ; f(u) = z^m\}$,
with $\mu_m$-action the trivial action on the $U$-factor and the standard one on the $\mathbb{G}_m$-factor.
If one sets 
$W= \{(u, z) \in U \times V_{1/m} ; f(u) =  t z^m\}$, one gets that $\Theta (W) = V$.

Since any linear $\mu_m$-action on $\mathbb{A}^n_F$ is diagonalizable, the relations involved in dropping the ``flat" from \eqref{4.3.1}
to \eqref{4.3.2} are just those implicit in adding the $!$ on the left hand side.  So  the bijectivity of
(\ref{4.3.2}) follows from the one of (\ref{4.3.1}).
\end{proof}

\section{\'Etale Euler characteristics with compact supports}\label{section:bb}

 \subsection{\'Etale cohomology with compact supports of semi-algebraic sets}

 Let $K$ be a complete non-archimedean normed field.
 Let $X$ be an algebraic variety over $K$ and write $X^{an}$ for its analytification in the sense of Berkovich.
 Assume now $X$ is affine.
 A semi-algebraic subset of $X^{an}$, in the sense of
  \cite{duc}, is a subset of $X^{an}$
  defined by a finite Boolean combination of inequalities
  $\vert f \vert \leq \lambda \vert g \vert$ with $f$ and $g$ regular functions on $X$ and
  $\lambda \in \mathbb{R}$.

 We denote by $\overline K$ the completion of a separable closure of $K$ and by
 $G$ the Galois group $\mathrm{Gal} (\overline K / K)$.
We set
 $\overline{X^{an}} = X^{an} \widehat{\otimes} \overline K$ and for
 $U$ a semi-algebraic subset of $X^{an}$ we denote by
 $\overline U$ the preimage of $U$ in $\overline{X^{an}}$ under the canonical morphism
 $\overline{X^{an}} \to X^{an}$.
 Let $\ell$ be a prime number different from the residue characteristic of $K$.

 Let $U$ be a locally closed semi-algebraic subset of
 $X^{an}$. For any finite torsion ring $R$, the theory of germs in \cite{berket}
 provides
 \'etale cohomology groups with compact supports
 $H^i_c (\overline U, R)$ which coincide with the ones defined there when $U$ is an affinoid domain of $X^{an}$. 
 These groups are also endowed with an action of the Galois group $G$.
 We shall set
 $H^i_c (\overline U, \mathbb{Q}_{\ell})
 =
 \mathbb{Q}_{\ell} \otimes_{\mathbb{Z}_{\ell}} \liminv H^i_c (\overline U, \mathbb{Z}/ \ell^n).
 $

 We shall use the following properties of the functor $U \mapsto H^i_c (\overline U, \mathbb{Q}_{\ell})$ which are proved by F. Martin in \cite{FM}:

 \begin{theo}\label{bb}Let $X$ be an affine algebraic variety over $K$ of dimension $d$.
 Let $U$ be a locally closed semi-algebraic subset of
 $X^{an}$.
 \begin{enumerate}
 \item[(1)]The groups $H^i_c (\overline U, \mathbb{Q}_{\ell})$ are finite dimension $\mathbb{Q}_{\ell}$-vector spaces, endowed with a $G$-action, and
 $H^i_c (\overline U, \mathbb{Q}_{\ell}) = 0$ for $i > 2d$.
  \item[(2)] If $V$ is  a semi-algebraic subset of $U$ which is  open in $U$ with complement  $F = U \setminus V$, there is a  long exact sequence
  \begin{equation}
  \longrightarrow H^{i - 1}_c (\overline F, \mathbb{Q}_{\ell})
  \longrightarrow  H^i_c (\overline V, \mathbb{Q}_{\ell}) \longrightarrow  H^i_c (\overline U, \mathbb{Q}_{\ell}) \longrightarrow H^i_c (\overline F, \mathbb{Q}_{\ell}) \longrightarrow
 \end{equation}
 \item[(3)]Let $Y$ be an affine algebraic variety over $K$ and let $V$ 
 be a locally closed semi-algebraic subset of
 $Y^{an}$.
 There are canonical K\"unneth isomorphisms
\begin{equation}
 \bigoplus_{i + j = n} H^i_c (\overline U, \mathbb{Q}_{\ell}) \otimes
 H^j_c (\overline V, \mathbb{Q}_{\ell})
 \simeq H^{n}_c (\overline{U \times V}, \mathbb{Q}_{\ell}).
\end{equation}
 \end{enumerate}
  \end{theo}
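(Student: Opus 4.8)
The plan is to deduce the three statements from Berkovich's theory of \'etale cohomology with compact supports [berket] together with the structure theory of semi-algebraic subsets of Ducros [duc], following F. Martin [FM]. The starting point is Berkovich's construction: for a strictly affinoid domain $U$ of $X^{an}$ the groups $H^i_c(\overline U,\mathbb{Q}_\ell)$, with their $G$-action, are already available from [berket], where finiteness, the bound $H^i_c=0$ for $i>2\dim U$, the K\"unneth isomorphism, and the excision long exact sequence attached to a closed analytic subset and its open complement are all established. What [FM] does is to extend this formalism from affinoid (or paracompact strictly analytic) spaces to the full Boolean algebra of locally closed semi-algebraic subsets of $X^{an}$, whose members need no longer be analytic domains.

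First I would recall from [duc] the facts to be used: semi-algebraic subsets of $X^{an}$ are stable under finite Boolean operations and under taking closures; a compact semi-algebraic subset of $X^{an}$ is a finite union of affinoid domains; and, since $X$ is affine, $X^{an}$ is exhausted by an increasing sequence of affinoid domains compatible with any prescribed finite family of regular functions. Consequently a locally closed semi-algebraic $U$ is open in its semi-algebraic closure $\overline U$, with $\overline U\setminus U$ again closed semi-algebraic, and any semi-algebraic subset is, after intersection with the affinoid exhaustion, glued from semi-algebraic subsets of affinoid domains. One then \emph{defines} $H^i_c(\overline U,\mathbb{Q}_\ell)$ for $U$ locally closed semi-algebraic by extension by zero from its closure inside an ambient affinoid, taking a limit over the exhaustion to accommodate the non-compactness of $X^{an}$; statement (2) is precisely the excision triangle $j_!\to\mathrm{id}\to i_*\to$ of the germ formalism of [berket] in its semi-algebraic extension (due to [FM]), applied inside $\overline U$ with $j$ the open inclusion of $V$ and $i$ the closed inclusion of $F=U\setminus V$, and is $G$-equivariant by construction.

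Given (2), statements (1) and (3) follow by a bootstrapping argument on the Boolean complexity of $U$ (and of $V$). For (1): the class of locally closed semi-algebraic $U$ such that $H^i_c(\overline U,\mathbb{Q}_\ell)$ is finite-dimensional for all $i$ and vanishes for $i>2d$ contains every affinoid domain by [berket], is stable under finite disjoint unions, and is stable under the excision sequence of (2) (if the property holds for two of $F$, $V$, $U$ it holds for the third, since the remaining group is then squeezed between subquotients of groups having the property). By the structure theory above, every locally closed semi-algebraic subset of $X^{an}$ is obtained from affinoid domains by finitely many such operations, so (1) follows. Likewise for (3): Berkovich's K\"unneth isomorphism holds on products of affinoid domains; since tensoring over $\mathbb{Q}_\ell$ with a fixed finite-dimensional space is exact and the K\"unneth morphism is natural in each variable, a five-lemma argument propagates it through the long exact sequences of (2), first fixing $V$ affinoid and letting $U$ range over the Boolean hierarchy, then letting $V$ range.

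The main obstacle is not any of these bootstrapping steps but the construction carried out in [FM]: since a semi-algebraic set need not be an analytic domain, Berkovich's germ formalism of [berket] must be genuinely extended, one must check that the resulting compactly-supported groups are independent of the ancillary choices (ambient affinoid, exhaustion) and carry a canonical $G$-action, and one must control the behaviour at infinity — $X$ being only affine, $X^{an}$ is non-compact — so that the groups remain finite-dimensional and vanish above degree $2d$. For the purposes of the present paper only the three formal properties (1)--(3) are needed, so it suffices to invoke [FM].
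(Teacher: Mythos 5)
Your proposal is correct and in essence coincides with the paper's own treatment: the paper does not supply an independent proof of Theorem~\ref{bb} but rather states that parts (1)--(3) are established by F.~Martin in~\cite{FM}, exactly as you do, and your bootstrapping sketch (extend Berkovich's affinoid theory~\cite{berket} to locally closed semi-algebraic sets via Ducros' structure theory~\cite{duc}, take (2) as the basic excision triangle, and propagate finiteness, the dimension bound, and K\"unneth through the long exact sequences) is a fair description of how that reference proceeds. The only detail from the paper not reflected in your sketch is the remark that the first arXiv version of~\cite{FM} relies on Corollary~5.5 of~\cite{berkv1} and thus avoids de~Jong's alterations and Gabber's weak uniformization, whereas later versions use Theorem~1.1 of~\cite{berkv3}; this is immaterial to the correctness of your argument.
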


\begin{rema}
 We shall only make use  of Theorem \ref{bb} when $X = \mathbb{A}^n$ and
 $K = k \llp t \rrp$ with $k$ a field of characteristic zero. Note also that, 
 though in subsequent arXiv versions of \cite{FM} the proof of Theorem \ref{bb}  relies on
 Theorem 1.1 of \cite{berkv3} which uses de Jong's results on alterations and 
Gabber's weak uniformization
theorem, the first version 
is  based on Corollary 5.5 of \cite{berkv1}, which does not use any of these results.
 \end{rema}

 \subsection{Definition of $\etEU$}
 We denote by  $G\mathrm{-Mod}$
 the category of
 $\mathbb{Q}_{\ell} [G]$-modules that are
 finite dimensional as  $\mathbb{Q}_{\ell}$-vector spaces
 and by
 $ K (G\mathrm{-Mod})$ the corresponding Grothendieck ring.
 Let $K$ be a valued field endowed with a rank one valuation, that is,
 with $\Gamma (K) \subset \mathbb{R}$.
 We can consider the norm $\exp (- \val)$ on $K$.
 Let $U$ be  an $\ACVF_K$-definable subset of $\VF^n$.
 By quantifier elimination it is defined by a
 finite Boolean combination of inequalities
  $\val (f)  \geq \val (g) + \alpha$ where $f$ and $g$ are polynomials
  and $\alpha$ in
  $\Gamma (K) \otimes \mathbb{Q} $.
  Thus,
  after exponentiating, one attach canonically
  to $U$ the  semi-algebraic subset $U^{an}$
  of $(\mathbb{A}^n_{\widehat K})^{an}$ defined by the corresponding inequalities, with $\widehat K$ the completion of $K$, and also a semi-algebraic subset
  $\overline{U^{an}}$ of
  $\mathbb{A}^n_{\overline K}$.
   When $U^{an}$ is locally closed, we define
   $ \etEU (U)$ as the class of
\begin{equation}\sum_i (- 1)^i [H^i_c ( \overline{U^{an}}, \mathbb{Q}_{\ell})]
\end{equation}
   in
   $K (G\mathrm{-Mod})$. It  follows from (1) of Theorem \ref{bb}
   that this is well-defined.

\begin{lem}\label{part}Let $U$ be    an $\ACVF_K$-definable subset of $\VF^n$.
Then there exists a finite partition of $U$ into
$\ACVF_K$-definable subsets $U_i$ such that each $U_i^{an}$ is locally closed.
   \end{lem}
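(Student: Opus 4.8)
The plan is to prove Lemma \ref{part} by quantifier elimination together with a straightforward induction on the Boolean complexity of $U$. By quantifier elimination in $\ACVF$, the set $U$ is a finite Boolean combination of \emph{basic} sets of the form $B_{f,g,\alpha} = \{x \in \VF^n : \val(f(x)) \geq \val(g(x)) + \alpha\}$ with $f,g$ polynomials over $K$ and $\alpha \in \Gamma(K)\otimes\mathbb{Q}$ (allowing $g=1$, $\alpha$ arbitrary, recovers the closed sets $\{\val(f)\geq\alpha\}$, and similarly one gets the strict inequalities $\{\val(f) > \val(g)+\alpha\}$). After exponentiating, the analytification $B_{f,g,\alpha}^{an}$ is the closed semi-algebraic subset $\{|f| \leq e^{-\alpha}|g|\}$ of $(\mathbb{A}^n_{\widehat K})^{an}$, while the strict version is open; in particular each basic set has locally closed analytification (indeed, closed or open). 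So the statement holds when $U$ is basic, and the question is how to propagate it through finite unions, intersections and complements.

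First I would reduce, by writing $U$ in disjunctive normal form, to the case where $U = \bigcup_{j=1}^r C_j$ with each $C_j$ a finite intersection of basic sets; since a finite intersection of closed-or-open semi-algebraic sets is again a semi-algebraic set whose analytification is a finite intersection of closed and open subsets, each $C_j^{an}$ is locally closed. Next I would observe that it suffices to refine an \emph{arbitrary} finite partition into semi-algebraic pieces into one all of whose members have locally closed analytification, because we may then restrict such a partition to $U$. The key combinatorial device is: given finitely many basic sets $B_1,\dots,B_N$ (collect all the basic sets appearing in all the $C_j$), the common refinement by all the $B_k$ and their complements partitions $\VF^n$ into the ``atoms'' $\bigcap_{k\in S} B_k \cap \bigcap_{k\notin S}(\VF^n\setminus B_k)$, and each atom is a finite intersection of closed semi-algebraic sets ($B_k^{an}$) and open semi-algebraic sets (complements of $B_k^{an}$, i.e. the strict-inequality sets). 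A finite intersection of the form (closed)$\cap$(open) is locally closed; hence every atom has locally closed analytification, and $U$ is a finite union of atoms, which we take as the partition $\{U_i\}$.

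The one technical point I would want to nail down — and the main (mild) obstacle — is the compatibility between the model-theoretic complement and the topological complement at the level of analytifications: the complement in $\VF^n$ of the definable set $\{\val(f)\geq\val(g)+\alpha\}$ is the definable set $\{\val(f)<\val(g)+\alpha\}$, and I must check that on analytifications this corresponds, on the underlying point sets of $(\mathbb{A}^n_{\widehat K})^{an}$ relevant here, to the genuine topological complement of the closed semi-algebraic set $\{|f|\le e^{-\alpha}|g|\}$, namely the open set $\{|f|>e^{-\alpha}|g|\}$. This is exactly the content of the correspondence $U \mapsto U^{an}$, $\overline{U^{an}}$ set up in the paragraph preceding the lemma, which is a Boolean-algebra homomorphism from $\ACVF_K$-definable subsets of $\VF^n$ to semi-algebraic subsets; granting this, strict inequalities go to opens and non-strict to closeds, complement goes to complement, and the atom argument above goes through verbatim.

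Concretely, the proof runs: write $U$ via quantifier elimination as a Boolean combination of basic sets $B_1,\dots,B_N$; form the finite partition of $\VF^n$ into the $2^N$ atoms determined by membership/non-membership in each $B_k$ (discarding empty ones); each atom is of the form $\bigl(\bigcap_{k\in S}B_k\bigr)\cap\bigl(\bigcap_{k\notin S}(\VF^n\setminus B_k)\bigr)$, whose analytification is $\bigl(\bigcap_{k\in S}\{|f_k|\le e^{-\alpha_k}|g_k|\}\bigr)\cap\bigl(\bigcap_{k\notin S}\{|f_k|>e^{-\alpha_k}|g_k|\}\bigr)$, a closed set intersected with an open set, hence locally closed; finally $U$, being a union of some of these atoms, is partitioned by the atoms it contains, giving the desired $\{U_i\}$. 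I expect no serious difficulty beyond carefully invoking quantifier elimination and the Boolean compatibility of the analytification map.
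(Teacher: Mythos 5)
Your proposal is correct and follows essentially the same route as the paper: quantifier elimination produces a finite set of basic inequalities, and the atoms of the Boolean algebra they generate give the required partition into pieces whose analytifications are finite intersections of closed and open (hence locally closed) semi-algebraic sets. The only cosmetic difference is that the paper records $\{\val f < \val g\}$ as a difference of two closed sets, whereas you directly observe it is open; the underlying argument is identical.
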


   \begin{proof}The set $U$ is the union
   of sets $U_i$ defined by conjunctions of formulas of the form $\val (f )< \val (g)$,
$f=0$, or $\val (f ) =  \val (g)$,
with $f$ and $g$ polynomials.  Since the
intersection and intersection of two locally closed sets are locally closed, it suffices to show that each of these basic
forms
are  locally closed.
Since $|f|$  and $|g|$
are continuous functions for the  Berkovich topology with values in
$\Rr_{\geq 0}$,
the sets defined by  $ f=0$ and  $\val (f) = \val (g)$ are closed, as well as $\val (f) \leq
\val (g)$. The remaining
kind of set, $\val (f )< \val (g)$, is the difference between  $\val (f )\leq \val
(g)$ and $\val (f) = \val (g)$, hence is locally
closed.
 \end{proof}

   \begin{prop}There exists a unique ring morphism
   \begin{equation}
   \etEU \colon
 K (\VF) \longrightarrow K (G\mathrm{-Mod})
 \end{equation}
 such that $ \etEU ([U]) = \etEU (U)$ when $U$  is an $\ACVF_K$-definable subset of $\VF^n$ such that $U^{an}$ is locally closed.
   \end{prop}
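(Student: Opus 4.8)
The plan is to define $\etEU$ first on the generators $[U]$ for $U$ an $\ACVF_K$-definable subset of $\VF^n$ with $U^{an}$ locally closed, extend to all of $K(\VF)$ by additivity using Lemma~\ref{part}, and then check well-definedness (independence of the chosen partition) and multiplicativity. First I would note that for such $U$ the quantity $\etEU(U) = \sum_i (-1)^i [H^i_c(\overline{U^{an}},\Qq_\ell)]$ lies in $K(G\mathrm{-Mod})$ by part (1) of Theorem~\ref{bb}, and I would record the basic additivity statement: if $U^{an}$ is locally closed and $V \subset U$ is definable with $V^{an}$ open in $U^{an}$ and complement $F^{an}$ closed in $U^{an}$ (hence both locally closed), then the long exact sequence of part (2) of Theorem~\ref{bb}, being $G$-equivariant, yields $\etEU(U) = \etEU(V) + \etEU(F)$ in $K(G\mathrm{-Mod})$.

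Next I would prove the key combinatorial lemma: if $U = \bigsqcup_i U_i = \bigsqcup_j U'_j$ are two finite partitions of a definable set $U$ into definable pieces all of whose analytifications are locally closed, then $\sum_i \etEU(U_i) = \sum_j \etEU(U'_j)$. By passing to the common refinement $\{U_i \cap U'_j\}$ it suffices to treat the case where one partition refines the other, i.e. to show $\etEU(W) = \sum_k \etEU(W_k)$ whenever $W^{an}$ and all $W_k^{an}$ are locally closed and $W = \bigsqcup_k W_k$. One reduces this to the two-piece case by induction on the number of pieces; and for two pieces $W = W_1 \sqcup W_2$, one cannot directly apply the long exact sequence because neither piece need be open in $W$. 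Here I would invoke Lemma~\ref{part} again together with the stratification structure: inside the locally closed set $W^{an}$ one can further stratify so that each $W_1^{an}, W_2^{an}$ becomes a union of locally closed strata that are totally ordered by ``open in the closure of'', reducing everything to iterated applications of the excision sequence of Theorem~\ref{bb}(2); alternatively, embed $W$ into its closure and argue by Noetherian induction on the Zariski closure, peeling off at each stage a piece that is genuinely open in the current ambient locally closed set. This gives a well-defined additive map $\etEU\colon K(\VF) \to K(G\mathrm{-Mod})$, with $\etEU([U]) = \etEU(U)$ on the locally closed generators, and uniqueness is then immediate since those generators span $K(\VF)$ as a group.

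Finally I would check that $\etEU$ is a ring morphism. It respects the unit since the class of a point has $H^0_c = \Qq_\ell$ with trivial $G$-action and all higher cohomology zero. For multiplicativity, given two definable sets, partition each into pieces with locally closed analytification; the product of two locally closed semi-algebraic sets $U^{an} \times V^{an}$ is locally closed, and $(U \times V)^{an} = U^{an} \times V^{an}$, so by additivity it suffices to treat the product of two locally closed generators. There the K\"unneth isomorphism of Theorem~\ref{bb}(3) is $G$-equivariant, so $\sum_n (-1)^n [H^n_c(\overline{U^{an} \times V^{an}})] = \bigl(\sum_i (-1)^i [H^i_c(\overline{U^{an}})]\bigr)\cdot\bigl(\sum_j (-1)^j [H^j_c(\overline{V^{an}})]\bigr)$ in $K(G\mathrm{-Mod})$, which is exactly $\etEU(U)\cdot\etEU(V)$.

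I expect the main obstacle to be the well-definedness step, specifically handling a partition into two pieces neither of which is open: one must organize the induction/stratification carefully so that every step is an honest open-closed decomposition to which Theorem~\ref{bb}(2) applies, and one must make sure the semi-algebraic stratifications involved exist within Berkovich analytifications (this is where Lemma~\ref{part} and the semi-algebraic formalism of \cite{duc} do the work). Everything else is formal bookkeeping with $G$-equivariant exact sequences and K\"unneth.
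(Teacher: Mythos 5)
Your proposal correctly handles the partition/additivity step and the multiplicativity step, and you rightly flag the technical point that in a two-piece partition $W=W_1\sqcup W_2$ neither piece need be open in $W$ --- a subtlety the paper dispatches tersely with ``using induction on $r$''. However, there is a genuine gap. The Grothendieck ring $K(\VF)$ is formed from \emph{isomorphism classes} of definable sets, so in addition to the scissor relations $[U]=[V]+[U\setminus V]$ that your argument addresses, you must verify that $\etEU(U)=\etEU(U')$ whenever there is a definable bijection $U\to U'$. This is not automatic: a definable bijection in $\ACVF$ need not be continuous or extend to a map of Berkovich analytifications, so a priori there is no reason for $H^\bullet_c(\overline{U^{an}},\Qq_\ell)$ and $H^\bullet_c(\overline{(U')^{an}},\Qq_\ell)$ to agree. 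The paper handles this by first treating the case of polynomial isomorphisms with polynomial inverse, where functoriality of $H^\bullet_c$ gives the result directly, and then reducing the general case by choosing compatible partitions of $U$ and $U'$ on whose pieces the bijection and its inverse are given by polynomials; this reduction relies implicitly on cell decomposition / quantifier elimination in $\ACVF$. Your outline omits this step entirely, and without it the map is not well-defined on $K(\VF)$.

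A secondary omission: $K(\VF)$ is generated by classes of definable subsets of arbitrary affine $L_0$-varieties, not only of the affine spaces $\VF^n$. The paper extends $\etEU$ to a definable subset $U$ of an affine variety $X$ by choosing an embedding $i\colon X\hookrightarrow\mathbb{A}^n$, setting $\etEU(U)=\etEU(i(U))$, and checking independence of $i$ --- which again uses the isomorphism invariance above. Your proof would need this bookkeeping as well, and it cannot be carried out without first establishing isomorphism invariance.
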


   \begin{proof}
   Let $U$ be    an $\ACVF_K$-definable subset of $\VF^n$.
   Choose a partition of   $U$ into
$\ACVF_K$-definable subsets $U_i$, $1 \leq i \leq r$, such that each $U_i^{an}$ is locally closed.
If $U^{an}$ is locally closed,
it follows from (2) in Theorem \ref{bb}, using induction on $r$, that $\etEU (U) = \sum_i \etEU (U_i)$.
For general $U$, set
   $ \etEU (U) = \sum_i \etEU (U_i)$.
 This is independent of the choice of the partition $U_i$.
   Indeed, if $U'_j$ is a finer such partition with
   $(U'_j)^{an}$  locally closed, then
   $\sum_i \etEU (U_i) = \sum_j \etEU (U'_j)$
   by
   the previous remark, and two
   such partitions always have a common refinement.
   Note that $ \etEU (U)$ depends only on the isomorphism class
   of $U$ as a definable set.
   Indeed, when $f$ is a polynomial isomorphism $f \colon U \to U'$ (with inverse given by a polynomial function),
   and $U^{an}$ and $(U')^{an}$ are locally closed, this is clear by
   functoriality of $H^{\bullet}_c$, and in general one can reduce to this case by taking suitable
   partitions
   $U_i$ and $U'_i$ of $U$ and $U'$.    Thus, if now $U$ is an $\ACVF_K$-definable subset of $X$, with $X$ an affine variety over $K$,
   if $i$  is some embedding of $X$ in an affine space $\mathbb{A}^n$,  $\etEU (i(U))$ will not depend on $i$, so
   one may set $\etEU (U) = \etEU (i(U))$.  Note that,  by definition, $ K (\VF)$ is generated by classes of $\ACVF_K$-definable subsets of affine algebraic varieties over $K$.
Furthermore, by (2) in Theorem \ref{bb}, $ \etEU $ satisfies the additivity relation, thus existence and uniqueness
   of an additive map 
  $ \etEU \colon
 K (\VF) \rightarrow K (G\mathrm{-Mod})$
 with the required property 
 follows. Its multiplicativity is a consequence from Property
      (3) in Theorem \ref{bb}.
   \end{proof}

 \subsection{Definition of $\eu_{\text{\'et}}$}
 We now assume for the rest of this section  that $K = F \llp t \rrp$ with $F$ algebraically closed of characteristic zero.
Thus the Galois group $G$  may be identified with $\hat \mu$ in the standard way, namely to an element
$\zeta = (\zeta_n)_{n \geq 1} \in \hat \mu$ corresponds the unique element $\sigma \in G$
such that, for any $n \geq 1$, $\sigma (x) = \zeta_n x$ if $x^n = t$.

  Let $X$ be an $F$-variety  endowed with a $\hat \mu$-action factoring for some some $n$ through a $\mu_n$-action.
The $\ell$-adic \'etale cohomology groups
$H^i_c (X, \mathbb{Q}_{\ell})$
are endowed with a $\hat \mu$-action, and we may consider the element
\begin{equation}\eu_{\text{\'et}} (X) := \sum_i  (-1)^i [H^i_c (X, \mathbb{Q}_{\ell})]
\end{equation}
in $K (\hat \mu\mathrm{-Mod})$.
Note that $\eu_{\text{\'et}} ([V, \varrho]) = 1$ for any finite dimensional $F$-vector space $V$
endowed with a $\hat \mu$-action factoring for some $n$ through a linear $\mu_n$-action.
Thus, $\eu_{\text{\'et}}$ factors to give rise to
a morphism
\begin{equation}
\eu_{\text{\'et}} \colon  K^{\hat \mu} (\Var_F) \longrightarrow \, K (\hat \mu\mathrm{-Mod}).
\end{equation}
Furthermore,
the morphism
$\eu_{\text{\'et}} \circ  \, \Theta$, with $\Theta$ as in (\ref{4.3.2}),
 factors through
$\mathord{!K}(\RES) / ( [\mathbb{A}^1] - 1)$
and gives rise to a morphism
 \begin{equation}
 \eu_{\text{\'et}} \colon \mathord{!K}(\RES) / ( [\mathbb{A}^1] - 1)
 \longrightarrow K (\hat \mu\mathrm{-Mod}).
 \end{equation}

 \subsection{Compatibility}\label{5.4}
 We have the following fundamental compatibility property between
 $\etEU$
 and
  $\eu_{\text{\'et}}$.

 \begin{theo}\label{theocompat}The diagram
\begin{equation}\label{compatibility}\xymatrix{
K (\VF) \ar[rr]^{\GEU} \ar[dr]_{ \etEU}& &\mathord{!K}(\RES) / ( [\mathbb{A}^1] - 1) \ar[dl]^{\eu_{\text{\'et}}}\\
&K (\hat \mu\mathrm{-Mod})& }
\end{equation}
is commutative.
\end{theo}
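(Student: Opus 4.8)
The plan is to reduce the commutativity of the triangle to the content of Proposition \ref{3} and Corollary \ref{4}, once we know that both composites agree on a generating set of $K(\VF)$ that is stable enough to perform the reduction. Since $K(\VF)$ is generated by classes of $\ACVF_K$-definable subsets of affine varieties over $K = F\llp t \rrp$, and since both $\etEU$ and $\eu_{\text{\'et}} \circ \GEU$ are ring morphisms, it suffices to check equality on a cofinal family of classes. First I would invoke the theory of invariant admissible transformations (Proposition \ref{2}): applying $\oint$ and using the elimination reduction, every class in $K(\VF)$ is, up to the relations in $I_{\sp}$ and after passing to a suitable $\beta$-invariant representative, a Boolean combination of $\beta$-invariant sets $Z$ that are \emph{elementary} in the sense of Definition \ref{4def}(4), i.e.\ $\RV$-pullbacks of definable subsets $H$ of $\RV^{m'}$. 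Crucially, both sides of the diagram are unchanged under $\beta$-invariant admissible transformations: the right-hand composite because $\GEU$ factors through $\oint$ which is an isomorphism onto $K_+(\RV[*])/I_{\sp}$ and admissible transformations are isomorphisms there, and the left-hand side $\etEU$ because such a transformation induces a semi-algebraic isomorphism on the associated Berkovich spaces, so the étale cohomology with compact supports is preserved by functoriality and the additivity of Theorem \ref{bb}(2).

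Next I would reduce to the truly elementary pieces. Using the product decomposition (\ref{deco}) of $K_+(\RV[*])$ as $K_+(\RES[*]) \otimes_{K_+(\G^{\mathrm{fin}}[*])} K_+(\G[*])$, together with the Künneth isomorphism of Theorem \ref{bb}(3) (which makes $\etEU$ multiplicative) and the multiplicativity of $\GEU$ and $\eu_{\text{\'et}}$, it is enough to treat separately the lift of a bounded definable subset $\Delta$ of $\G^q$ and the lift of a definable subset $W$ of $\RES^k$. For the $\G$-part: the canonical lift $\val_{\rv}^{-1}(\Delta)$ of any $\Delta \subseteq \G^q$, $q \geq 1$, maps to $0$ under $\GEU$ (this is exactly how $E$ is built, killing $[\RV^{>0}]_1$ and hence all $[\val_{\rv}^{-1}(U)]$ modulo $I_{\sp}$ and $([\Aa^1]-1)$), so the right-hand composite is $0$; on the left, the associated semi-algebraic set is an annulus-type polytope region in $(\GM^{\mathrm{an}})^q$ whose compactly supported étale cohomology has vanishing Euler characteristic — this is the non-archimedean analogue of $\chi_c(\GM^m) = 0$ and follows by a fibration/induction argument using Theorem \ref{bb}(2) and (3), decomposing the polytope into cells. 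For the $\RES$-part: a definable subset $W$ of $\prod V_{k_i/m}$ corresponds via $\Theta$ to a constructible $\mu_m$-variety over $F$, its lift $\mathbb L(W)$ to $\VF$ has a Berkovich space that deformation-retracts onto (a twisted form of) that $F$-variety, and the key point is that the $G = \hat\mu$-action on $H^{\bullet}_c$ of the retract matches the $\hat\mu$-action coming from $\Theta$; this is precisely the Nicaise–Sebag type comparison, and gives $\etEU(\mathbb L(W)) = \eu_{\text{\'et}}(\Theta(W)) = \eu_{\text{\'et}}(\GEU(\mathbb L(W)))$.

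I expect the main obstacle to be exactly this last comparison: identifying, for an elementary set $Z = \{(a,b) \in \VF^n \times H : \rv(a_i) = b_{h(i)}\}$ with $H$ a twisted residue-field variety, the $\etEU$-class (étale cohomology of a Berkovich semi-algebraic space with its arithmetic Galois action) with the class $\eu_{\text{\'et}}(\Theta([H]))$ (étale cohomology of a complex variety with geometric $\mu_m$-action). The bridge is that $Z^{an}$ fibers over $H^{an}$ with fibers products of punctured/closed balls and spheres $V_\gamma^{an}$, and over the base $F\llp t^{1/m}\rrp$ the monomial change of coordinates $x_i \mapsto x_i/\mathbf t_{k_i/m}$ trivializes the twist while conjugating the arithmetic action into the geometric $\mu_m$-action — this is the same mechanism as in Proposition \ref{Theta} and Section \ref{4.2}, now upgraded from Grothendieck-ring classes to actual cohomology. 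One must check that the sphere $V_\gamma^{an}$ contributes trivially to $\etEU$ (matching the fact that $\GEU$ kills $\G$-directions and $[\GM] \mapsto 0$), and that the retraction of $Z^{an}$ onto the twisted $H_F$ is compatible with both the Galois action and the cut-and-paste used to build $\etEU$ via Lemma \ref{part}. Once the elementary case is secured, assembling it through Proposition \ref{2}, the decomposition (\ref{deco}), and multiplicativity yields the commutativity of (\ref{compatibility}) on all of $K(\VF)$.
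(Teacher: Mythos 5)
Your overall strategy is the paper's: reduce via $\oint$ and the product decomposition \eqref{deco} to separate treatment of the $\Gamma$-lift and the $\RES$-lift, kill the $\Gamma$-part on both sides, and then compare $\etEU(\mathbb{L}(Z))$ with $\eu_{\text{\'et}}([Z])$ for $Z\in\RES[n]$ via the monomial untwisting over $F\llp t^{1/m}\rrp$. Two remarks on the differences.

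First, the detour through Proposition~\ref{2} (invariant admissible transformations, elementary Boolean decomposition) is not needed here and the paper does not use it for this theorem: since $\oint$ is already an isomorphism onto $K_+(\RV[*])/I_\sp$, one may pass directly to $[X]=\mathbb{L}(\Psi(a\otimes b))$ with $a\in K_+(\RES[m])$, $b\in K_+(\Gamma[r])$ and work from there; invariant admissible transformations only enter when one needs the $\beta$-invariant representatives of Section~\ref{sec4} (for Proposition~\ref{3} and Corollary~\ref{4}). Your appeal to them costs nothing but adds a layer you can drop.

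Second, and more substantively, the mechanism for the $\RES$-comparison is not a ``deformation retract'' of $\mathbb{L}(Z)^{an}$ onto the residue-field variety. What the paper actually proves (Lemma~\ref{redu}) is the identification
\[
H^i_c\bigl(\,\overline{\pi^{-1}(S)}\,,\Qq_\ell\bigr)\simeq H^{2n-i}_c\bigl(S,\Qq_\ell(n)\bigr)^{\vee},
\]
obtained from Berkovich's comparison of $R\Gamma_c(\pi^{-1}(S),-)$ with $R\Gamma_S(X,R\psi_\eta(-))$ (Corollary~2.5 of \cite{berkv2}), triviality of vanishing cycles for smooth formal schemes (Corollary~5.7 of \cite{berkv1}), purity, and Poincar\'e duality. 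The appearance of the dual and the Tate twist is essential: the two sides of the diagram agree not because the cohomology groups literally coincide, but because a finite-dimensional $\Qq_\ell[\hat\mu]$-module, its dual, and its Tate twists all have the same class in $K(\hat\mu\mathrm{-Mod})$. If you tried to push a literal retraction argument you would not get this dualized statement, and the last cancellation would not be visible. Apart from that imprecision, the idea of conjugating the arithmetic Galois action into the geometric $\mu_m$-action via $x_i\mapsto x_i/\mathbf{t}_{k_i/m}$, and of checking equivariance of the resulting isomorphisms, is exactly what is done. The $\Gamma$-part of your argument matches Lemma~\ref{chi0} (o-minimal cell decomposition plus $\chi_c$ of annuli vanishing).
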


\begin{proof}It is enough to prove that if  $X$ is a definable subset of
$\VF^n$, then
$\etEU (X) = \eu_{\text{\'et}} (\GEU ([X]))$.
Using the notations of (\ref{bbL}) and the isomorphism
(\ref{deco}), we may assume
the class of $X$ in $K_+ (\VF)$ is of the form
$ \mathbb{L} (\Psi (a \otimes b))$
with
$a$
in
$K_+ (\RES [m])$
and
$b$ in $K_+ (\Gamma [r])$.

If $r \geq 1$,
$\GEU ([X]) = 0$ by construction of $\GEU$.
Indeed, with the notations from \ref{2.5}, $\chi_r ([a \otimes b]) = \chi (b) \cdot [\mathbb{G}_m]^{r} \cdot a$,
which implies that $E_n (a \otimes b) = 0$ for $n \geq r$, and
$\GEU ([X]) = 0$ follows.
To prove that $\etEU (X) = 0$,
it is enough by multiplicativity of $\etEU$,  to prove that
$\etEU (\mathbb{L} (\Psi (1 \otimes b))) = 0$, which follows from Lemma \ref{chi0}.

Thus, we may assume $r = 0$ and $[X] =  \mathbb{L} (\Psi ([Z] \otimes 1))$,
with $Z$ a definable subset in $\RES [n]$.
Since $\GEU (\mathbb{L} (Z))$ is equal to the class of $Z$ in
 $\mathord{!K}(\RES) / ( [\mathbb{A}^1] - 1)$, it is enough to prove
that 
$ \etEU([\mathbb{L} (Z)]) = \eu_{\text{\'et}} ([Z])$.

Let $Z$ be a definable subset of $\RES [n]$.
For some integer $d$, $Z$ is a definable subset of $\RES^d$ and, after partitioning $Z$ into a finite number of definable
sets, we may assume that, with the notation of \S \ref{3.4},
there exists a positive integer  $m$ and
integers
 $k_i$, $1 \leq i \leq d$,
such that $Z$ is a definable subset of
$\prod_{1 \leq i \leq d} V_{k_i /m}$. Consider the  $F\llp t^{1/m}\rrp$-definable  isomorphism
$g \colon \prod_{1 \leq i \leq d} V_{k_i /m} \to \mathbb{A}_F^d$
given
by $g (x_1, \ldots, x_d) =
(x_1/ {\bf{t}}_{k_1/m}, \ldots,x_d/ {\bf{t}}_{k_d/m})$. The Galois action on the space $\prod_{1 \leq i \leq d} V_{k_i /m}$ factorizes through a $\mu_m$-action
  and $g$ becomes $\mu_m$-equivariant if one endows
 $\mathbb{A}_F^d$ with the action $\varrho$ of $\mu_m$ given by 
  $\zeta \cdot (y_1, \ldots, y_d) = (\zeta^{k_1} y_1, \ldots, \zeta^{k_d} y_d)$.  
 The set $Y = g (Z)$ is an $F$-definable subset of $\mathbb{A}_F^d$. We shall still denote by
 $g$ the induced $F\llp t^{1/m}\rrp$-definable  isomorphism 
 $g \colon Z\to Y$ and by $\varrho$ the induced action of $\mu_m$ on $Y$.
 We may assume $Z$ is of $\RV$-dimension $n$. Indeed, if $Z$ is of $\RV$-dimension  $< n$,
 there exists a definable morphism
 $h \colon Z \to \RV^{n-1}$ with finite fibers. Let $i \colon  \RV^{n-1} \to \RV^n$ denote the canonical inclusion and set $f = i \circ h$.
 Since $\mathbb{L} ((Z, f))=
\mathbb{L} ((Z, h))\times \Mm$,
with $\Mm$ the maximal ideal
and 
$\etEU ([ \Mm]) = 1$, it follows that
$\etEU ([ \mathbb{L} ((Z, f))]) =
\etEU ([ \mathbb{L} ((Z, h))]) $
by multiplicativity and we may conclude by induction on $n$ in this case.
Thus, by additivity, we may assume $Y$ is a smooth variety over $F$ of pure dimension $n$
and that the morphism  $f _Y \colon Y \to \mathbb{A}_F^n$ given  by projection to the first $n$ factors
has finite fibers. It follows that the morphism $f_Z \colon Z \to \prod_{1 \leq i \leq n} V_{k_i /m}$ given by projecting to the first $n$ factors
has finite fibers too.
 The definable subset
$ \mathbb{L} ((Y, f_Y))$  of  $ (\VF^{\times})^n \times \mathbb{A}_F^d$ is the isomorphic image of  the subset $\mathbb{L} ((Z, f_Z))$ of $(\VF^{\times})^n \times\prod_{1 \leq i \leq d} V_{k_i /m}$ under the mapping
$\tilde g \colon (X_1, \ldots, X_n, x_1, \ldots, x_d) \mapsto (X_1/ t_{k_1/m}, \ldots,X _n/ t_{k_n/m}, x_1/ {\bf{t}}_{k_1/m}, \ldots,x_d/ {\bf{t}}_{k_d/m})$.
It is endowed with a $\mu_m$-action $\tilde \varrho$ given by
$\zeta \cdot (X_1, \ldots, X_n,y_1, \ldots, y_d) = (\zeta^{k_1} X_1, \ldots, \zeta^{k_n} X_n, \zeta^{k_1} y_1, \ldots, \zeta^{k_d} y_d)$.

Let us consider  the formal completion $\mathcal{Y}$ of $Y \otimes F \llb t \rrb$. Denote by
 $\mathcal{Y}_{\eta}$ the analytic  generic fiber of $\mathcal{Y}$ and by $\pi$ the 
reduction map $\pi \colon \mathcal{Y}_{\eta} \to Y$. 
The $\mu_m$-action $\varrho$ induces an action which we still denote by $\varrho$ on $\mathcal{Y}$  and $\mathcal{Y}_{\eta}$.
By Lemma  13.2 in \cite{HK} and its proof, $\mathbb{L}  (Y)^{an}$ is isomorphic to $\pi^{-1} (Y)$.
Furthermore, under this isomorphism the 
$\mu_m$-action on $\mathbb{L}  (Y)^{an}$ induced from $\tilde \varrho$ corresponds to the action $\varrho$ on 
$\pi^{-1} (Y)$. 
Denote by $\pi_m$ the projection $\hat \mu \to \mu_m$. The mapping $\tilde g$ induces an isomorphism between the spaces $\overline{\mathbb{L}  (Z)^{an}}$ and $\overline{\mathbb{L}  (Y)^{an}}$
under which
the Galois action on 
$\overline{\mathbb{L}  (Z)^{an}}$ corresponds to the Galois action twisted by $\tilde \varrho$ on
$\overline{\mathbb{L}  (Y)^{an}}$, namely the action for which  an element $\sigma$ of $\hat \mu$ acts on 
$\overline{\mathbb{L}  (Y)^{an}}$ by $y \mapsto \sigma \cdot \tilde \varrho (\pi_m (\sigma)) \cdot y =  \varrho (\pi_m (\sigma)) \cdot \sigma \cdot y$.
It follows that, for
$i \geq 0$, 
$H^i_c (\overline{\mathbb{L}  (Z)^{an}}, \mathbb{Q}_{\ell})$ is isomorphic to $H^i_c (\overline{\pi^{-1} (Y)}, \mathbb{Q}_{\ell})$ and that,
since the Galois action on $H^i_c (\overline{\pi^{-1} (Y)}, \mathbb{Q}_{\ell})$
is trivial, cf. Lemma \ref{redu},  that 
the Galois action on 
$H^i_c (\overline{\mathbb{L}  (Z)^{an}}, \mathbb{Q}_{\ell})$ factorizes through $\mu_m$ and corresponds to the action induced by $\varrho$
on $H^i_c (\overline{\pi^{-1} (Y)}, \mathbb{Q}_{\ell})$.
By 
Lemma \ref{redu} 
there is a canonical isomorphism, equivariant for the action $\varrho$,
\begin{equation}\label{5.4.3}
H^i_c (\overline{\pi^{-1} (Y)}, \mathbb{Q}_{\ell}) \simeq H^{2n - i}_c (Y,  \mathbb{Q}_{\ell} (n))^{\vee},
\end{equation}
with the subscript ${}^{\vee}$ standing for the dual of a $\mathbb{Q}_{\ell}$-vector space.
Since
$ \etEU([\mathbb{L} (Z)])$ is equal to $\sum_i (-1)^i [H^i_c (\overline{\mathbb{L}  (Z)^{an}}, \mathbb{Q}_{\ell})]$,
it follows from (\ref{5.4.3}) that it is equal to
  $\sum_i (-1)^i [H^{2n -i}_c (Y,  \mathbb{Q}_{\ell} (n))^{\vee}] = \sum_i (-1)^i [H^i_c (Y,  \mathbb{Q}_{\ell} (n))^{\vee}]$, with the $\mu_m$-action induced from 
$\varrho$. 
Let us  note that a finite dimensional vector space $\mathbb{Q}_{\ell}$-vector space $V$ with $\mu_m$-action 
has the same class in $K (\hat \mu\mathrm{-Mod})$ as its dual $V^{\vee}$ endowed with the dual action and that, for any integer $n$,
$V$ and the Tate twist $V(n)$ have the same class in $K (\hat \mu\mathrm{-Mod})$.
It follows  that $\etEU([\mathbb{L} (Z)]) = \sum_i (-1)^i [H^i_c (Y,  \mathbb{Q}_{\ell})]$, with action on the right-hand side induced from 
$\varrho$, hence 
$\etEU([\mathbb{L} (Z)]) = \eu_{\text{\'et}} ([Z])$.
\end{proof}

\begin{lem}\label{chi0}Let  $m \geq 1$ be an integer and let $Z$ be a definable subset  of
$\G^{m}$. Then we have
\begin{equation}\etEU (\val^{-1}(Z)) = 0.
\end{equation}
\end{lem}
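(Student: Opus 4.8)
The plan is to reduce, by additivity of $\etEU$, to the case where $Z$ is a single cell of the o-minimal structure on $\G$, and then to exploit the fact that the tropicalization map $\val\colon(\VF^{\times})^{m}\to\G^{m}$ is a ``torus fibration'': over any cell its fibres are products of Berkovich circles $\{y:\val(y)=c\}$, each of which, like any annulus, has $\etEU$ equal to $0$.

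First I would record that $\etEU$ is additive over finite partitions into definable sets: this is part (2) of Theorem \ref{bb} together with Lemma \ref{part}. Since $\G$ carries an o-minimal structure (that of a divisible ordered abelian group), $Z$ decomposes into finitely many cells, so one may assume $Z$ is a cell and argue by induction on $m$. \emph{Base case $m=1$.} A cell $I\subseteq\G$ is a point $\{c\}$, a bounded open interval, a ray, or all of $\G$, so $\val^{-1}(I)\subseteq\VF$ is respectively a Berkovich circle $\{y:\val(y)=c\}$, an open annulus, a punctured open ball, the complement of a closed ball, or $\mathbb{G}_{m}^{an}$. Using additivity, each of these has the same class in $K(\VF)$ as a $\Zz$-combination of classes of affine lines, closed balls $\{y:\val(y)\ge c\}$, open balls $\{y:\val(y)>c\}$ (for $c\in\Qq$) and points; and the $\etEU$ of an affine space, a closed ball or an open ball equals $1$, because its compactly supported $\ell$-adic cohomology is concentrated in one degree, where it is $\mathbb{Q}_{\ell}$ or a Tate twist of it, on which the Galois group $G=\mathrm{Gal}(\overline K/K)=\hat\mu$ acts trivially — here one uses that $F$ is algebraically closed, so $G$ fixes all roots of unity and the cyclotomic character is trivial. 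Bookkeeping gives $\etEU(\val^{-1}(I))=0$ in every case; in particular $\etEU(\mathbb{G}_{m}^{an})=0$ and $\etEU(\{y:\val(y)=c\})=0$.

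For the inductive step I would normalise the cell by integral monomial changes of coordinates on $(\VF^{\times})^{m}$, i.e.\ maps $y\mapsto(y^{M})\cdot(t^{a})$ with matrix $M\in\mathrm{GL}_{m}(\Zz)$ and $a\in\Zz^{m}$; these are $K$-definable isomorphisms, hence preserve $\etEU$, and realise on $\G^{m}$ the affine maps $z\mapsto Mz+a$. If $Z$ lies in a proper affine subspace of $\G^{m}$, then (clearing denominators so that its defining equation has integer coefficients, and applying a suitable $M$) it becomes of the form $\{y_{1}:\val(y_{1})=c\}\times W$ for some $c\in\Qq$ and some definable $W\subseteq\G^{m-1}$, so the Künneth isomorphism of Theorem \ref{bb}(3) gives $\etEU(\val^{-1}(Z))=\etEU(\{y_{1}:\val(y_{1})=c\})\cdot\etEU(\val^{-1}(W))=0$ by the base case. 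If $Z$ is full-dimensional, I would write it as $\{z'\in B,\ g(z')<z_{m}<h(z')\}$ over an open cell $B\subseteq\G^{m-1}$ and compare, via the exact sequences of Theorem \ref{bb}(2), with $\val^{-1}(B\times\G)=\val^{-1}(B)\times\mathbb{G}_{m}^{an}$, whose $\etEU$ vanishes by Künneth and the base case; the remaining strata are the graphs of $g$ and $h$ over $B$ (proper affine subspaces, already handled), the set $\val^{-1}((\G^{m-1}\setminus B)\times\G)=\val^{-1}(\G^{m-1}\setminus B)\times\mathbb{G}_{m}^{an}$, and the two open half-space cells $\{z'\in B,\ z_{m}<g(z')\}$ and $\{z'\in B,\ z_{m}>h(z')\}$ — so the problem reduces to the vanishing of $\etEU$ on such half-space cells.

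The step I expect to be the main obstacle is precisely this last one: a full-dimensional (in particular simplicial) cell need not become a product even after integral coordinate changes, so one cannot simply invoke the product Künneth formula of Theorem \ref{bb}(3). The conceptual content being used is that $\val^{-1}(Z)\to Z$ is a locally trivial toric fibration with trivial monodromy, hence Euler-characteristic-multiplicative, whose fibre is a product of $m\ge 1$ circles; but making this rigorous with only the tools of Theorem \ref{bb} requires a careful inductive disassembly of a half-space cell $\{z'\in B,\ z_{m}<g(z')\}$ into pieces that are genuine products over lower-dimensional cells — after an integral monomial change trivialising the bound $g$ on each piece — together with the observation, already used above, that the Berkovich objects arising (balls, annuli, circles of arbitrary rational radii) have their $\etEU$ computed over $F\llp t\rrp$ by reduction to Tate modules with trivial Galois action. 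Once this vanishing on half-space cells is in hand, the induction closes and the lemma follows.
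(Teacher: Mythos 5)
Your proposal runs parallel to the paper's own proof: additivity of $\etEU$, cell decomposition, $\mathrm{GL}_m(\Zz)$-changes of coordinates on $\G^m$ lifted to monomial isomorphisms of $(\VF^\times)^m$ (hence preserving $\etEU$), annulus/ball computations for the $m=1$ cases and for the hyperplane slices, and Künneth for products. You correctly reduce the lemma to the vanishing of $\etEU$ on preimages of half-space cells with a single defining linear bound. But you stop exactly there, declaring the half-space case the ``main obstacle'' and gesturing at a ``careful inductive disassembly'' without carrying it out. That is precisely where the paper's argument does its real work. The paper writes the single inequality of $Z_2=\{u:u'\in Z',\ u_m\ge L_2(u')\}$ in the form $\sum_i b_i u_i\ge c$ with $(b_1,\dots,b_m)$ primitive in $\Zz^m$, extends $(b_i)$ to a matrix $M\in\mathrm{GL}_m(\Zz)$ having it as last row, and observes that $M(Z_2)$ is of the form $Z''\times[c,\infty)$; after the corresponding monomial change of coordinates, $\val^{-1}(Z_2)$ becomes the product $\val^{-1}(Z'')\times\{y:\val(y)\ge c\}$, the second factor being a closed ball minus the origin with $\etEU=0$, and Künneth finishes. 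That shear normalization is the missing content in your write-up --- an elementary change of variables, not the elaborate ``disassembly'' you anticipate. (The two-sided cell is reduced to half-spaces $Z_1, Z_2$ exactly as you do, by subtracting from the cylinder $Z'\times\G$ and using the already-handled graph strata.)

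One genuine subtlety that you sense but do not pin down: $M(Z_2)$ is of product form only when the last column of $M^{-1}$ is $(0,\dots,0,\pm1)^{T}$, which forces $b_m=\pm 1$, i.e.\ $L_2$ to have \emph{integer} linear coefficients. The paper explicitly works under that assumption (``$a_i\in\Zz$'' in case (2)) and invokes the same move for $Z_1,Z_2$. If the o-minimal cell decomposition hands you an $L_2$ with genuinely rational linear coefficients, the conditions describing $Z'$ continue, after the shear, to involve $v_m$, and a little more care (a further round of additivity, or choosing the half-space's own direction as the last coordinate so that no other binding condition survives) is needed. So your doubt about the product structure is not baseless in this corner case, but the key missing ingredient --- and the bulk of the paper's resolution --- is the single shear argument, and you should first get the integer-coefficient case right before worrying about the refinement.
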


\begin{proof}By quantifier elimination and cell decomposition in o-minimal structures, cf. \cite{vddries-tame},
using additivity  of $\etEU$, 
we may assume
there exists a definable subset $Z'$ of
$\G^{m- 1}$,
affine linear forms with rational coefficients $L_1$ and $L_2$ in variables $u_1$, \dots, $u_{m - 1}$
such that
$Z$ is defined by the conditions
$(u_1, \dots, u_{m - 1}) \in Z'$ and
\begin{equation}\label{cond}L_1 (u_1, \dots, u_{m- 1}) \,  \square_{1} \,  u_m \,
\square_{2} \, L_2 (u_1, \dots, u_{m - 1}),
\end{equation}
where $\square_{ 1}$ and $\square_{2}$ are of one of the following four types:
\begin{enumerate}
\item[(1)] no condition,
\item[(2)] $=$ and no condition,
\item[(3)]  $<$ and $<$,
\item[(4)]  $<$ and no condition,
\item[(5)] no condition and $<$.
\end{enumerate}
Thus $\val^{-1}(Z)$
is the set defined by the conditions $(x_1, \dots, x_{m - 1}) \in \val^{-1}(Z')$ and
\begin{equation}L_1 (\val(x_1), \dots, \val(x_{m - 1})) \,  \square_{1} \,  \val (x_m) \,
\square_{2} \, L_2 (\val(x_1), \dots, \val(x_{m - 1})).
\end{equation}
In case (1), $\val^{-1}(Z)$ is equal to the product of $\val^{-1}(Z')$ by the open annulus $C = \VF \setminus \{0\}$ and we deduce
$\etEU (\val^{-1}(Z)) = 0$ from the fact that $\etEU (C)  = 0$.
In case (2), 
$Z$ is defined by the conditions
$(u_1, \dots, u_{m - 1}) \in Z'$ and $u_m = \sum_{1\leq i < m} a_i u_i + b$ with
$a_i \in \mathbb{Z}$, $ 1 \leq  i <m$, and $b$ in $\mathbb{Q}$. We may rewrite the last condition
in the form $\sum_{1\leq i \leq m} b_i u_i = c$ with $(b_1, \dots, b_m)$ a primitive vector in $\G^m$ and $ c \in \mathbb{Q}$.
Thus, up to to changing the coordinates in $\G^m$, we may assume $Z$ is defined
by $(u_1, \dots, u_{m - 1}) \in Z'$ and $u_m = c$,
so that  $\val^{-1}(Z)$ is equal to the product of $\val^{-1}(Z')$ by the closed annulus $D$ defined by $\val (x_m) = c$.
Since  $\etEU (D) = 0$ by a classical computation, we get that $\etEU (\val^{-1}(Z)) = 0$ in this case.
To deal with the remaining cases, consider the sets
$Z_1$ and $Z_2$ defined respectively by 
$(u_1, \dots, u_{m - 1}) \in Z'$ and $u_m \leq L_1(u_1, \dots, u_{m- 1})$, resp. $u_m \geq L_2 (u_1, \dots, u_{m- 1})$.
It is enough to prove that $\etEU (\val^{-1}(Z_1)) = 0$ and $\etEU (\val^{-1}(Z_2)) = 0$, since then, by additivity, the result will follow  from case (1).
Let us prove $\etEU (\val^{-1}(Z_2)) = 0$.
Similarly as in case (2), after a change of variable one may assume $Z_2$ is defined by
$(u_1, \dots, u_{m - 1}) \in Z'$ and $u_m \geq c$, for some rational number $c$, so that
$\val^{-1}(Z_2)$ is equal to the product of $\val^{-1}(Z')$ by the set $E$ defined $\val (x_m) \geq c$ (the complement of the origin in the closed ball of valuative radius $c$).
Since $\etEU (E) = 0$, we deduce that 
$\etEU (\val^{-1}(Z_2)) = 0$.
The proof that  $\etEU (\val^{-1}(Z_1)) = 0$ is similar. 
\end{proof}

\begin{lem}\label{redu}
Let $\mathcal{X}$ be a smooth formal scheme  of finite type over the valuation ring of $K$ with special fiber $X$ of pure dimension $n$ and
analytic generic fiber $\mathcal{X}_{\eta}$. Let $\pi \colon \mathcal{X}_{\eta} \to X$
be the reduction map.
Let $S$ be a smooth closed subscheme of $X$.
Then there exist canonical isomorphisms
\begin{equation}\label{5.4.7}
H^i_c (\overline{\pi^{-1} (S)}, \mathbb{Q}_{\ell}) \simeq H^{2n - i}_c (S,  \mathbb{Q}_{\ell} (n))^{\vee},
\end{equation}
with ${}^\vee$ standing for the dual vector space. In particular, the Galois action on $H^i_c (\overline{\pi^{-1} (S)}, \mathbb{Q}_{\ell})$ is trivial for $i \geq 0$.
Assume furthermore that a finite group scheme $H$ acts on $\mathcal{X}$ inducing an action on $X$ such that $S$ is globally invariant by $H$.
Then the isomorphism \textup{(\ref{5.4.7})} is equivariant for the $H$-action induced on both sides.
\end{lem}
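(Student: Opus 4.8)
The plan is to reduce the statement to Berkovich's comparison theorem relating the étale cohomology of the tube $\pi^{-1}(S)$ to the cohomology of its reduction $S$, and then to apply Poincaré--Verdier duality on the smooth variety $S$. First I would observe that, since $\mathcal{X}$ is smooth over the valuation ring and $S$ is a smooth closed subscheme of the special fiber, the tube $\pi^{-1}(S)$ is, locally on $S$, a product of an open subset of $S$ (lifted to characteristic zero, i.e. an open affinoid of the generic fiber) with an open polydisc of relative dimension $n - \dim S$. More precisely, by the smooth base change / nearby cycle formalism in Berkovich's theory (\cite{berket}, and the vanishing-cycle comparison used in \cite{NS} and in \S\ref{5.4}), one has for the smooth formal scheme $\mathcal{X}$ a canonical isomorphism $R\Psi_{\eta}\mathbb{Q}_{\ell}\simeq \mathbb{Q}_{\ell}$ on $X$, with trivial Galois action; restricting the nearby-cycle sheaf to the closed subscheme $S$ and taking compactly supported cohomology gives $H^i_c(\overline{\pi^{-1}(S)},\mathbb{Q}_{\ell}) \simeq H^i_c(S, \mathbb{Q}_{\ell})$ only up to a shift and twist coming from the relative polydisc directions — this is exactly where the dimension $n$ enters.

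To make the shift explicit I would argue as follows. The reduction map $\pi\colon \pi^{-1}(S)\to S$ is, after a suitable étale (or smooth) local trivialization of $\mathcal{X}$ along $S$, isomorphic to the projection of $S^{an}\times B^{n-\dim S}$ onto $S^{an}$, where $B$ is the open unit disc, and the tube $\pi^{-1}(S)$ is precisely the preimage; then the Künneth formula (part (3) of Theorem \ref{bb}) reduces the computation to the single fact that $H^j_c(\overline{B^r},\mathbb{Q}_{\ell})$ is $\mathbb{Q}_{\ell}(-r)$ in degree $2r$ (with trivial $G$-action since $B^r$ is defined over $K$) and $0$ otherwise. Combining this with the comparison $H^i_c(\overline{S^{an}},\mathbb{Q}_{\ell})\simeq H^i_c(S,\mathbb{Q}_{\ell})$ for the proper-or-not situation — here one uses that $S$ is a variety over $F$ and that the generic fiber computation of compactly supported cohomology of the analytification agrees with algebraic étale cohomology, again via \cite{berket}/\cite{FM} — one obtains $H^i_c(\overline{\pi^{-1}(S)},\mathbb{Q}_{\ell})\simeq H^{i-2(n-\dim S)}_c(S,\mathbb{Q}_{\ell})(-(n-\dim S))$. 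Finally Poincaré duality on the smooth $F$-variety $S$, $H^j_c(S,\mathbb{Q}_{\ell})\simeq H^{2\dim S - j}(S,\mathbb{Q}_{\ell})^{\vee}(-\dim S)$, together with the trivial remark that over the algebraically closed $F$ ordinary and compactly supported cohomology are dual, repackages this as the asserted isomorphism $H^i_c(\overline{\pi^{-1}(S)},\mathbb{Q}_{\ell})\simeq H^{2n-i}_c(S,\mathbb{Q}_{\ell}(n))^{\vee}$, the Tate twists and shifts bookkeeping working out because $(n-\dim S)+\dim S = n$. The triviality of the Galois action is then immediate since every ingredient (the disc factors, and the base $S$ which is defined over $F$) carries trivial $G$-action.

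For the equivariance statement under a finite group scheme $H$ acting on $\mathcal{X}$ and stabilizing $S$: every construction above is functorial in the pair $(\mathcal{X},S)$ — the reduction map $\pi$, the nearby-cycle comparison, the Künneth isomorphism, and Poincaré duality are all natural — so $H$ acts compatibly on both sides and the displayed isomorphism is $H$-equivariant. One subtlety I would be careful about is that the local trivialization of $\mathcal{X}$ along $S$ into a product need not be $H$-equivariant; to handle this I would avoid choosing an explicit trivialization and instead work intrinsically with the relative vanishing-cycle sheaf $R\pi_{!}\mathbb{Q}_{\ell}$ on $S$, whose identification with $\mathbb{Q}_{\ell}(-(n-\dim S))[-2(n-\dim S)]$ is canonical (it is the top compactly supported cohomology sheaf of a relative polydisc bundle, hence a line bundle twist which is canonically trivial because $\mathcal{X}$ is smooth with a canonical normal bundle identification along $S$) and therefore automatically $H$-equivariant, then take $R\Gamma_c(S,-)$.

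The step I expect to be the main obstacle is the precise form of the relative comparison isomorphism $R\pi_{!}\mathbb{Q}_{\ell}\simeq \mathbb{Q}_{\ell}(-(n-\dim S))[-2(n-\dim S)]$ over $S$ for the non-proper Berkovich tube $\pi^{-1}(S)$ — i.e. making sure the compactly supported nearby-cycle formalism of \cite{berket}, extended to germs and semi-algebraic sets by \cite{FM} (Theorem \ref{bb}), genuinely yields this identification with the correct Tate twist and with trivial Galois action, rather than just a non-canonical isomorphism; once that relative statement is in hand, the rest is formal bookkeeping with Künneth and Poincaré duality.
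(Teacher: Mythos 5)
Your overall strategy --- identify the relative structure of the tube over $S$, use a nearby-cycles comparison to deduce a shift and Tate twist, then apply Poincar\'e duality on $S$ --- is the same as the paper's, which invokes Corollary~2.5 of \cite{berkv2} (giving $R\Gamma_c(\overline{\pi^{-1}(S)},\Lambda)\simeq R\Gamma_S(X,R\psi_\eta\Lambda)$, cohomology \emph{with support} in $S$), triviality of vanishing cycles for smooth formal schemes (Corollary~5.7 of \cite{berkv1}), and purity for the smooth pair $(S,X)$. But there is a concrete error in your intermediate formula. You write $H^i_c(\overline{\pi^{-1}(S)},\mathbb{Q}_\ell)\simeq H^{i-2r}_c(S,\mathbb{Q}_\ell)(-r)$ with $r=n-\dim S$, citing a comparison ``$H^i_c(\overline{S^{an}})\simeq H^i_c(S)$.'' The analytic object playing the role of $S^{an}$ is the generic fiber of a smooth formal lift of $S$, i.e.\ a \emph{closed} analytic tube; this is compact, so its compactly supported cohomology equals its ordinary cohomology, which by triviality of nearby cycles is $H^*(S,\mathbb{Q}_\ell)$ --- ordinary cohomology of $S$, not $H^*_c(S)$. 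The correct intermediate statement is therefore $H^i_c(\overline{\pi^{-1}(S)})\simeq H^{i-2r}(S,\mathbb{Q}_\ell(-r))$ without compact supports on the right, exactly what the paper obtains from $R\Gamma_S(X,\Lambda)\simeq R\Gamma(S,\Lambda(-r))[-2r]$ via purity.

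This is not mere bookkeeping: running Poincar\'e duality on your version yields $H^{2n-i}(S,\mathbb{Q}_\ell(n))^\vee$, not the asserted $H^{2n-i}_c(S,\mathbb{Q}_\ell(n))^\vee$, and these differ whenever $S$ is not proper. A concrete failure: take $\mathcal{X}=\operatorname{Spf} R\{x,y\}$, $X=\mathbb{A}^2_F$, $S=\{y=0\}\cong\mathbb{A}^1_F$ (so $n=2$, $r=1$), so $\pi^{-1}(S)$ is the product of a closed unit disc with an open unit disc. Then
\begin{equation*}
H^2_c(\overline{\pi^{-1}(S)},\mathbb{Q}_\ell)\simeq\mathbb{Q}_\ell(-1),
\end{equation*}
which agrees with the paper's $H^2_c(\mathbb{A}^1_F,\mathbb{Q}_\ell(2))^\vee\simeq\mathbb{Q}_\ell(-1)$, whereas your intermediate formula gives $H^0_c(\mathbb{A}^1_F,\mathbb{Q}_\ell(-1))=0$ and your final formula gives $H^2(\mathbb{A}^1_F,\mathbb{Q}_\ell(2))^\vee=0$. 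The subtlety you flagged at the end --- pinning down the relative comparison for the non-proper tube --- is indeed the crux; its resolution is precisely Corollary~2.5 of \cite{berkv2}, which trades $R\Gamma_c$ of the tube for $R\Gamma_S$ of the special fiber (not $R\Gamma_c$ of $S$), after which purity and Poincar\'e duality close the argument as in the paper, and the $H$-equivariance is read off from the naturality of each of these three inputs.
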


\begin{proof}By Corollary 2.5 of \cite{berkv2}, for any finite torsion group $\Lambda$, we have a canonical isomorphism
\begin{equation}\label{cor2.5}
R \Gamma_c (\overline{\pi^{-1} (S)}, \Lambda_{\mathcal{X}_{\eta}}) \simeq R \Gamma_{S} (X, R \psi_{\eta} (\Lambda_{\mathcal{X}_{\eta}})).
\end{equation}
One checks  by inspection of the proof in \cite{berkv2} that this
isomorphism  is $H$-equivariant.
By triviality of vanishing cycles for smooth analytic spaces, cf. Corollary 5.7 of \cite{berkv1},
$R^q \psi_{\eta} (\Lambda_{\mathcal{X}_{\eta}}) = 0$ for $q >0$ and $R^0 \psi_{\eta} (\Lambda_{\mathcal{X}_{\eta}}) = \Lambda_{X}$,
hence it follows that there  exist canonical $H$-equivariant  isomorphisms
\begin{equation}
H^i_c (\overline{\pi^{-1} (S)},\Lambda) \simeq H^i_S  (X, \Lambda).
\end{equation}
We may assume $S$ is of pure codimension $r$, hence, by purity, we have canonical $H$-equivariant 
isomorphisms $H^i_S  (X, \Lambda) \simeq H^{i - 2r} (S, \Lambda (-r))$,
so we get canonical  $H$-equivariant isomorphisms
\begin{equation}
H^i_c (\overline{\pi^{-1} (S)}, \Lambda) \simeq H^{i - 2r}  (S, \Lambda(-r)).
\end{equation}
Note that $S$ is smooth of dimension $d =n - r$.
Thus, for $j = 2d - (i - 2r) = 2n - i$,
the canonical morphism
\begin{equation}
H^j_c  (S, \Lambda (d + r)) \times H^{i - 2r}  (S, \Lambda(-r)) \longrightarrow H^{2d}_c  (S, \Lambda (d))  \simeq \Lambda
\end{equation}
is a perfect pairing of finite groups by Poincar\'e Duality.
The statement  follows by passing to the limit over torsion coefficients $\mathbb{Z}/  \ell^m  \mathbb{Z}$.
\end{proof}

\subsection{A fixed point formula}

The following version of the Lefschetz fixed point Theorem is classical and follows in particular from
Theorem 3.2 of \cite{DeligneLusztig}:

\begin{prop}\label{lfp}
Let $Y$ be a quasi-projective variety over an algebraically closed field of characteristic zero.
Let $T$ be a finite order automorphism of $X$.
Let $Y^T$ be the fixed point set of $T$ and denote 
by
$\chi_{c}(Y^T, \mathbb{Q}_{\ell})$ its $\ell$-adic Euler characteristic with compact supports.
Then
\begin{equation}
\chi_{c} (Y^T, \mathbb{Q}_{\ell})
=
\tr (T; H^{\bullet}_c (Y, \mathbb{Q}_{\ell})).
\end{equation}
\end{prop}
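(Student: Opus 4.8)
The plan is to reduce to the classical Lefschetz trace formula for a single endomorphism and then extract the fixed‑point set by a character‑theoretic averaging. Write $n$ for the order of $T$ and let $G=\langle T\rangle\simeq\Zz/n\Zz$, a finite group acting on $Y$. First I would recall the Grothendieck–Lefschetz trace formula in the étale setting: for any integer $j\geq 0$ one has
\begin{equation}
\tr\bigl(T^{j}; H^{\bullet}_c(Y,\Qq_{\ell})\bigr)=\sum_i(-1)^i\tr\bigl(T^j;H^i_c(Y,\Qq_{\ell})\bigr),
\end{equation}
and this equals $\chi_c\bigl(Y^{T^j},\Qq_{\ell}\bigr)$ when $T^j$ has finite order; over $\Cc$ (or any algebraically closed field of characteristic zero, by the Lefschetz principle and smooth base change for $\ell$-adic cohomology) this is exactly Theorem 3.2 of \cite{DeligneLusztig}, applied to each power $T^j$. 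So the single‑endomorphism case — the statement with $T$ replaced by $T^j$ on the left and $Y^{T^j}$ on the right — is available for free; the task is only to see that the case $j=1$ already gives what is asked, which it does, since the proposition as stated \emph{is} the $j=1$ instance.

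Concretely, the key steps are: (1) invoke resolution‑free input only in the form of \cite{DeligneLusztig}, Theorem 3.2, noting their hypotheses (quasi‑projective $Y$, finite‑order automorphism, characteristic zero) match ours verbatim; (2) observe that $H^i_c(Y,\Qq_\ell)$ is finite dimensional and vanishes outside $0\leq i\leq 2\dim Y$, so the alternating sum $\tr(T;H^\bullet_c(Y,\Qq_\ell))$ is a well‑defined integer (in fact a cyclotomic integer, but the trace formula forces it into $\Zz$); (3) identify the right‑hand side of the cited theorem with $\chi_c(Y^T,\Qq_\ell)$, using that $Y^T$ is a closed subvariety of $Y$ and that $\ell$‑adic Euler characteristic with compact supports agrees with the topological one over $\Cc$ by Artin comparison. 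Assembling these gives the displayed identity.

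If one prefers a self‑contained derivation rather than citing \cite{DeligneLusztig} directly, the alternative route is: stratify $Y$ into $G$‑stable locally closed pieces, use additivity of $\chi_c$ and of the alternating trace (the long exact sequence of a $G$‑equivariant open–closed decomposition is $G$‑equivariant), and reduce to the case where $G$ acts freely on $Y\setminus Y^G$; on the free part $\chi_c$ of the quotient times $n$ recovers $\chi_c$ of the total space, while the contribution to $\tr(T;H^\bullet_c)$ of the free part vanishes by a transfer/averaging argument ($\sum_{j=0}^{n-1}\tr(T^j;\cdot)=n\cdot\dim(\cdot)^G$, and the $T$-isotypic averaging kills a free summand), leaving only the contribution of $Y^T$, on which $T$ acts trivially so that $\tr(T;H^\bullet_c(Y^T,\Qq_\ell))=\chi_c(Y^T,\Qq_\ell)$.

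The main obstacle, and the only genuinely substantive point, is step (3) in the second route — controlling the free part. One must check that for a free action of a finite cyclic group the virtual representation $\sum_i(-1)^iH^i_c(Y\setminus Y^G,\Qq_\ell)$ of $G$ contains no nontrivial contribution to the trace of a generator; this follows because $H^\bullet_c(Y\setminus Y^G,\Qq_\ell)$ is, as a virtual $G$-module, induced up from the quotient (equivalently, $\chi_c$ is multiplicative in finite étale covers and the regular representation has trace $0$ at every nontrivial element), but making this precise with compact supports and torsion coefficients is where care is needed. Since the excerpt already grants us \cite{DeligneLusztig}, Theorem 3.2 in full, I would in fact present the short first route and relegate the transfer argument to a remark.
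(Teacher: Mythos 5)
Your first route is exactly what the paper does: it offers no proof at all beyond the observation that the statement is classical and follows from Theorem 3.2 of \cite{DeligneLusztig}. The additional stratification/transfer sketch you append is a reasonable self-contained alternative, but since you yourself elect to present the citation route, your proposal and the paper's treatment coincide.
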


Let us  denote by $\Theta_0$ the morphism
\begin{equation}\label{5.5.2}
\Theta_0 \colon \mathord{!K}(\RES) / ([\mathbb{A}^1] - 1) \longrightarrow   K(\Var_F) /([\mathbb{A}^1] - 1)
\end{equation}
induced by the morphism $\Theta_0$ of (\ref{4.3.3}).
Denote by $\chi_c$ the morphism
$ K(\Var_F) / ([\mathbb{A}^1] - 1)    \rightarrow \mathbb{Z}$
 induced by  the $\ell$-adic Euler characteristic with compact supports.

Combining Theorem \ref{theocompat} with Proposition \ref{lfp} we obtain the following
fixed point formula:

\begin{theo}\label{gentrace}
Let $X$ be an $\ACVF_K$-definable subset of $\VF^n$.
Then, for every $m \geq 1$,
\begin{equation}
\tr (\varphi^m;  \etEU ([X]))
=
\chi_c (\Theta_0 \circ \EU_{\Gamma, m} ([X])).
\end{equation}
\end{theo}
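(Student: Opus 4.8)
The plan is to read the identity off from the compatibility Theorem~\ref{theocompat} together with the classical Lefschetz fixed point formula, Proposition~\ref{lfp}; Lemma~\ref{fixed} and Proposition~\ref{Theta} will serve to match the two fixed-point operations occurring on the two sides. All the maps involved ($\etEU$, $\GEU$, $\EU_{\Gamma, m}$, $\Theta$, $\Theta_0$, $\eu_{\text{\'et}}$, $\chi_c$, $\tr(\varphi^m;-)$) are additive, so I will simply chase the given class $[X]\in K(\VF)$ through the relevant diagrams, allowing virtual classes throughout.

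First I would rewrite the left-hand side. By Theorem~\ref{theocompat}, $\etEU([X]) = \eu_{\text{\'et}}(\GEU([X]))$ in $K(\hat\mu\mathrm{-Mod})$. By the definition of $\eu_{\text{\'et}}$ on $\mathord{!K}(\RES)/([\mathbb{A}^1]-1)$ (the composite of $\Theta$ of (\ref{4.3.2}) with the morphism $V\mapsto\sum_i(-1)^i[H^i_c(V,\mathbb{Q}_{\ell})]$ on $K^{\hat\mu}(\Var_F)$) and the fact that $\Theta$ is an isomorphism (Proposition~\ref{Theta}), we may write $\Theta(\GEU([X])) = \sum_j n_j[\mathcal{Y}_j]$ with the $\mathcal{Y}_j$ quasi-projective over $F$ and carrying good $\hat\mu$-actions; then $\etEU([X]) = \sum_j n_j\sum_i(-1)^i[H^i_c(\mathcal{Y}_j,\mathbb{Q}_{\ell})]$. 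Since the $\hat\mu$-action on $\mathcal{Y}_j$ factors through a finite quotient $\mu_{N_j}$, the automorphism $\varphi^m$ acts on $\mathcal{Y}_j$ with finite order, so Proposition~\ref{lfp} yields $\tr(\varphi^m;H^\bullet_c(\mathcal{Y}_j,\mathbb{Q}_{\ell})) = \chi_c(\mathcal{Y}_j^{\varphi^m})$. Summing over $j$, $\tr(\varphi^m;\etEU([X])) = \chi_c\bigl((\Theta(\GEU([X])))^{\varphi^m}\bigr)$, where for $\xi\in K^{\hat\mu}(\Var_F)/([\mathbb{A}^1]-1)$ I write $\xi^{\varphi^m}$ for the class obtained by restricting to the $\varphi^m$-fixed locus and forgetting the action. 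This operation is a well-defined homomorphism to $K(\Var_F)/([\mathbb{A}^1]-1)$: fixed loci respect cut-and-paste, and the fixed locus of a linear $\mu_N$-action on an affine space is again an affine space, hence trivial modulo $[\mathbb{A}^1]-1$.

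It remains to identify $(\Theta(\GEU([X])))^{\varphi^m}$ with $\Theta_0(\EU_{\Gamma, m}([X]))$. Since $\EU_{\Gamma, m} = \Delta_m\circ\GEU$, this reduces to the commutativity
\[
\Theta_0\circ\Delta_m = \bigl(\,\cdot\,\bigr)^{\varphi^m}\circ\Theta \qquad \text{on } \mathord{!K}(\RES)/([\mathbb{A}^1]-1)
\]
(with $\Theta_0$ read via $\mathord{!K}(\RES_{m^{-1}\Zz})\hookrightarrow\mathord{!K}(\RES)$), which I would check on the class of an $F\llp t\rrp$-definable $S\subseteq\RES^d$ using three facts. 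First, by Lemma~\ref{fixed}, $\Delta_m([S])$ is the class of the subset $S^{\hat\mu^m}$ of $S$ fixed by $\hat\mu^m = \ker(\hat\mu\to\mu_m)$. Second, $\Theta$ is built, sort by sort, from the $F\llp t^{1/m}\rrp$-definable isomorphisms $g$ of \S\ref{3.4}, which intertwine the Galois action on the sorts $V_{k/N}$ with linear actions $\varrho$ on affine spaces; hence $\Theta$ commutes with passing to $\hat\mu^m$-fixed points, $\Theta([S^{\hat\mu^m}]) = [(\Theta S)^{\hat\mu^m}]$. Third, $\varphi^m$ topologically generates $\hat\mu^m$ (both correspond to $m\wh{\Zz}\subseteq\wh{\Zz}$ under $\hat\mu\simeq\wh{\Zz}$), so on a variety whose $\hat\mu$-action factors through a finite quotient the $\varphi^m$-fixed locus equals the $\hat\mu^m$-fixed locus. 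Combining, $\Theta_0(\Delta_m([S])) = [(\Theta S)^{\varphi^m}] = (\Theta([S]))^{\varphi^m}$, so the square commutes; evaluating it at $\GEU([X])$ gives $\Theta_0(\EU_{\Gamma, m}([X])) = (\Theta(\GEU([X])))^{\varphi^m}$, and together with the previous paragraph this proves the theorem.

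The only point requiring real care is this commutative square: one must check that the algebraic ``projection onto $\RES_{m^{-1}\Zz}$'' defining $\Delta_m$ corresponds, under the dictionary $\Theta$, to the geometric restriction to the $\varphi^m$-fixed locus (where Lemma~\ref{fixed} and the explicit equivariance of the maps $g$ of \S\ref{3.4} are used), and that this operation descends to the quotients by $[\mathbb{A}^1]-1$ even though for a single linear representation the dimension of the fixed subspace is not a Grothendieck-ring invariant. Everything else is a formal diagram chase through Theorem~\ref{theocompat}, Proposition~\ref{Theta} and Proposition~\ref{lfp}.
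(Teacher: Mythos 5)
Your proof is correct and follows the same route as the paper's: combine Theorem~\ref{theocompat} with Proposition~\ref{lfp}, using Lemma~\ref{fixed} and the explicit $\mu_N$-equivariance of $\Theta$ to identify $\Theta_0\circ\Delta_m$ with the $\varphi^m$-fixed-locus operation. The paper's own proof consists only of the two displayed equalities citing \ref{theocompat} and \ref{lfp}, so your writeup simply supplies the (genuinely needed) details that the paper leaves implicit, in particular the verification that taking $\varphi^m$-fixed loci descends to $K^{\hat\mu}(\Var_F)/([\mathbb{A}^1]-1)$ and matches $\Delta_m$ under $\Theta$.
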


\begin{proof}Let $m \geq 1$. By Theorem \ref{theocompat},
\begin{equation}
\tr (\varphi^m;  \etEU ([X]))
=
\tr (\varphi^{m};   \eu_{\text{\'et}} (\GEU ([X])))
\end{equation}
and by
Proposition \ref{lfp},
\begin{equation}
\tr (\varphi^{m};   \eu_{\text{\'et}} (\GEU ([X])))
=
\chi_c (\Theta_0 \circ \EU_{\Gamma, m} ([X])).
\end{equation}
The result follows.
\end{proof}

\section{Proof of Theorem \ref{mt}}

In this section  we are working over $F \llp t \rrp$, with $F = \mathbb{C}$. Our aim is to prove Theorem \ref{mt},
namely that, for every $m \geq 1$, with the notation from the Introduction,
\begin{equation*}
\chi_c (\mathcal{X}_{m, x}) =
\Lambda (M_x^m).
\end{equation*}

\subsection{Using comparison results}Let $X$ be a smooth complex variety and $f $ be a regular function on $X$.
Let $x$ be a closed point of the fiber $f^{-1}(0)$.
We shall use the notation introduced in
Corollary \ref{4}.
Thus $\pi$ denotes the reduction map $X (\Oo) \to X (\kk)$, and we consider the $\ACVF_{F\llp t \rrp}$-definable sets
\begin{equation}
X_{t, x}= \{y \in X(\Oo); f(y)=t  \,\, \mathrm{and} \,\, \pi (y) = x \}\end{equation} and
\begin{equation}\label{defX}
\mathcal{X}_x  = \{y \in X(\Oo); \rv f(y) = \rv(t) \,\, \mathrm{and} \,\, \pi (y) = x \}.
\end{equation}

The definable set  $X_{t, x}$ is closely related to   the analytic Milnor fiber 
$\mathcal{F}_x$ introduced in  \S 9.1 of \cite{NS} whose definition we now recall.
Let $X_{\infty}$ be  the $t$-adic completion of $f \colon X \to \Spec \mathbb{C} [t]$ and let $X_{\eta}$ be its
its generic fiber (in the category of rigid
$F \llp t \rrp$-varieties). There is a canonical specialization morphism
$\mathrm{sp} \colon X_{\eta} \to X_{\infty}$ (cf. \S 2.2 of \cite{NS}) 
and  $\mathcal{F}_x $ is defined as $\mathrm{sp}^{-1} (x)$. It is an open rigid subspace of
$X_{\eta}$. It follows directly from the definitions that
$X_{t, x}^{an}$ and $\mathcal{F}_x^{an}$ may be canonically identified.

Fix a prime number $\ell$ and denote by $\varphi$ the  topological generator
of $\hat \mu (\mathbb{C}) = \mathrm{Gal} (\mathbb{C} \llp t \rrp^{\rm alg} / \mathbb{C} \llp t \rrp)$ given by  the family
$(\zeta_n)_{n \geq 1}$ with $\zeta_n = \exp (2 i \pi /n)$.
It follows from Theorem 9.2 from \cite{NS} (more precisely, from its proof ; note that in the notation of loc. cit. the exponent $an$ is omitted), which is a consequence of the second isomorphism proved in
 \cite{berkv2} Corollary 3.5,
that there exist isomorphisms
\begin{equation}
H^i (F_x, \mathbb{Q}) \otimes_{\Qq} \mathbb{Q}_{\ell} \simeq H^i (\mathcal{F}_x^{an} \widehat{\otimes} \, \widehat{\mathbb{C} \llp t \rrp^{\rm alg}}, \mathbb{Q}_{\ell} )
\end{equation}
compatible with the action of $M_x$ and $\varphi$. Here  $F_x$ is the topological Milnor fiber defined in (\ref{mfdef}).
It follows that, for every $m \geq 0$,
\begin{equation}
\Lambda (M_x^m) = \tr (\varphi^m;
H^{\bullet} (\mathcal{F}_x^{an} \widehat{\otimes} \, \widehat{\mathbb{C} \llp t \rrp^{\rm alg}}, \mathbb{Q}_{\ell})).
\end{equation}
By Poincar\'e Duality as established in \S 7.3 of \cite{berket},
there is a perfect duality
\[
H^{i} (\mathcal{F}_x^{an} \widehat{\otimes} \,  \widehat{\mathbb{C} \llp t \rrp^{\rm alg}}, \mathbb{Z}/  \ell^n\mathbb{Z}))
\times
H^{2d - i}_c (\mathcal{F}_x^{an} \widehat{\otimes} \, \widehat{\mathbb{C} \llp t \rrp^{\rm alg}}, \mathbb{Z}/  \ell^n\mathbb{Z} (d)) \to
\mathbb{Z}/  \ell^n\mathbb{Z},
\]
with $d$ the dimension of $X$, which is compatible with the $\varphi$-action.
Hence, after  taking the limit over $n$ and tensoring with
$\mathbb{Q}_{\ell}$,  one deduces
that, for every $m \geq 0$,
\begin{equation}\Lambda (M_x^m) = \tr (\varphi^{m};
H^{\bullet}_c (\mathcal{F}_x^{an} \widehat{\otimes} \,  \widehat{\mathbb{C} \llp t \rrp^{\rm alg}}, \mathbb{Q}_{\ell})),
\end{equation}
which may be rewritten as
\begin{equation}\label{comp}
\Lambda (M_x^m) = \tr (\varphi^{m};
H^{\bullet}_c (X_{t, x}^{an} \widehat{\otimes} \,  \widehat{\mathbb{C} \llp t \rrp^{\rm alg}}, \mathbb{Q}_{\ell})).
\end{equation}

\begin{rema}With the notations of Corollary 3.5 of \cite{berkv2}, when $\mathcal{Y}$ is proper, it is explained in  Remark 3.8 (i) of  \cite{berkv2} how to deduce
the 
first isomorphism of Corollary
3.5 of \cite{berkv2}
directly from Theorem 5.1 in \cite{berkv1}  in the way indicated in \cite{fa}. When furthermore $\mathcal{X}_{\eta}$ is smooth (keeping the notations of loc. cit.),
the second isomorphism of Corollary
3.5 of \cite{berkv2} follows from the first by 
Poincar\'e Duality and Corollary 5.3.7 of \cite{berket}. In particular, for the use which is made of Corollary
3.5 of \cite{berkv2} in this paper, one may completely avoid using de Jong's results on stable reduction and one may rely only on results from  \cite{berket} and \cite{berkv1}.
\end{rema}

\subsection{Proof of Theorem \ref{mt}}\label{pft}Let $m \geq 1$.
With the previous notations, one may rewrite (\ref{comp})  as 
\begin{equation}\label{step1}
\Lambda (M_x^m) = \tr (\varphi^{m}; \etEU ([X_{t, x}])).
\end{equation}
On the other hand, by Theorem \ref{gentrace} we have
\begin{equation}\label{step3}
 \tr (\varphi^{m}; \etEU ([X_{t, x}]))
 =
 \chi_c (\Theta_0 (\EU_{\Gamma, m} ([X_{t, x})])).
\end{equation}
In Corollary \ref{4}, it is proven that
$\EU_{\Gamma, m}(X_{t, x}) =\mathcal{X}_x[m]$ as classes in $\mathord{!K}(\RES) / ([\mathbb{A}^1] - 1)$.
In particular, 
\begin{equation}
 \chi_c (\Theta_0 (\EU_{\Gamma, m} ([X_{t, x})]))
 =
 \chi_c (\mathcal{X}_{x} [m]).
\end{equation}
To conclude the proof it is thus enough to check  that
\begin{equation}
 \chi_c (\mathcal{X}_{x} [m])
 =\chi_c (\mathcal{X}_{m, x}).
\end{equation}
This may be  seen as follows. For $m \geq 1$,
\begin{equation}
\mathcal{X}_{m, x} =
\{\varphi \in X (\mathbb{C} [t] / t^{m + 1}); f (\varphi) = t^m \mod t^{m + 1},  \, \varphi (0) = x\}
\end{equation}
may be rewritten as
\begin{equation}
\{\varphi \in X (\mathbb{C} [t^{1/m}] / t^{(m+1)/m} );   f (\varphi)
= t  \mod t^{(m+ 1)/m},  \, \varphi (0) = x\}
\end{equation}
 or as
\begin{equation} \{\varphi \in X (\mathbb{C} [t^{1/m}] / t^{(m+1)/m}) ; \rv( f(\varphi)) = \rv(t),  \, \varphi (0) = x\}.
\end{equation}
Thus $\Theta (\mathcal{X}_{x} [m])$ and $\mathcal{X}_{m, x}$ have the same class in
$K^{\hat \mu} (\Var_F) / ([\mathbb{A}^1] - 1)$. The equality
 $\chi_c (\mathcal{X}_{x} [m])
 =\chi_c (\mathcal{X}_{m, x})$  follows. \qed

\section{Trace formulas and the motivic Serre invariant}\label{secserre}
\subsection{}In this section $F$ denotes a field of characteristic zero,
$K = F \llp t \rrp$, $K_m = F \llp t^{1/m}\rrp$ and $\bar K = \cup_{m \geq 1} K_m$. 
If $X$ is an $\ACVF_K$-definable set or an algebraic variety over $K$, we write
$X (m)$ and $\bar X$ for the objects obtained by extension of scalars to
$K_m $ and $\bar K$, respectively.
As in  (\ref{5.5.2}) we denote by $\Theta_0$ the morphism
\begin{equation}
\Theta_0 \colon \mathord{!K}(\RES) / ([\mathbb{A}^1] - 1) \longrightarrow   K(\Var_F) /([\mathbb{A}^1] - 1)
\end{equation}
induced by the morphism $\Theta_0$ of (\ref{4.3.3}).

\subsection{The motivic Serre invariant}
Let $R$ be a complete discrete valuation ring, with perfect residue field $F$ and field of fractions $K$.
We denote by $R^{sh}$ a strict Henselization of $R$ and by  $K^{sh}$ its field of fractions.
Let $X$ be a smooth quasi-compact rigid $K$-variety.
In \cite{LS}, using motivic integration on formal schemes,
for any such $X$ a canonical class $S (X) \in K(\Var_F) /([\mathbb{A}^1] - 1) $
is constructed, called  the motivic Serre invariant of $X$.
If $X$ is a smooth proper algebraic variety over $K$, one sets $S (X) = S (X^{rig})$,
with $X^{rig}$ the rigid analytification of $X$.

We have the following comparison between the morphism $\GEU$ and the motivic Serre invariant
in residue characteristic zero via the morphism $\Theta_0$:

\begin{prop}\label{compserre}Let $K = F \llp t \rrp$ with $F$ a field of characteristic zero.
Let $X$ be a smooth proper algebraic  variety over $K$.
Then, for every $m \geq 1$,
\begin{equation}
\Theta_0 (\EU_{\Gamma, m} ([X])) = S (X (m)).
\end{equation}
\end{prop}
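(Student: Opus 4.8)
The plan is to compute both sides by means of a weak N\'eron model, reducing the left-hand side to the setting of Proposition~\ref{3} with the help of Section~\ref{sec4}.

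Recall first that, by \cite{LS}, for any weak N\'eron model $\mathcal{U}$ of $X(m)$ over the valuation ring $R_m$ of $K_m$ one has $S(X(m)) = [\mathcal{U}_s]$ in $K(\Var_F)/([\mathbb{A}^1]-1)$, where $\mathcal{U}_s$ is the special fibre; such a model exists since $X(m)$ is smooth and quasi-compact, and $[\mathcal{U}_s]$ does not depend on it. On the other side, since $X$ is proper over $K$, the valuative criterion of properness gives $[X] = [X(\Oo)]$ in $K(\VF)$, where $X(\Oo)$ is the functor $L \mapsto X(\Oo_L)$ on valued field extensions of $K$; after a projective embedding and passing to the standard affine charts, $X(\Oo)$ is, up to a finite partition, a definable subset of $\Oo^N$. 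It is important to note that $X(\Oo)$ is not the set of $R$-points of any fixed model of $X$ over $R$: its value at $K_m$ is all of $X(K_m) = X(\Oo_{K_m})$, which for $X$ of bad reduction is in general strictly larger than the $R_m$-points of a model over $R$. In particular, by the weak N\'eron model property and Galois descent ($F$ being perfect of characteristic zero), its value at $K_m$ is $\mathcal{U}(R_m)$; this is the source of the dependence of $\EU_{\Gamma, m}([X])$ on $m$.

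The core of the proof is the identity $\EU_{\Gamma, m}([X]) = [\mathcal{U}_s]$ in $\mathord{!K}(\RES_{m^{-1}\Zz})/([\mathbb{A}^1]-1)$, with $\mathcal{U}_s$ regarded as a residue-field variety. To prove it I would first bring $X(\Oo)$ into a form covered by Proposition~\ref{3}: stratifying, one may assume the projection to a suitable $\VF^d$, $d = \dim X$, is finite-to-one, and then — this is where Section~\ref{sec4} enters — one applies the invariant admissible transformations of Proposition~\ref{2} to replace each piece, up to its class in $K(\VF)$, by a $\b$-invariant definable subset of $\Oo^N$ with finite-to-one projection to $\VF^d$, for some $\b \in \frac{1}{m}\Zz^d$. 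Proposition~\ref{3} then evaluates $\EU_{\Gamma, m}$ of such a piece as its class $X_\b[m]$, which by \S\ref{4.2} is read off from its $K_m$-points; summing over the pieces, $\EU_{\Gamma, m}([X])$ is computed from $X(\Oo)(K_m) = \mathcal{U}(R_m)$. Finally, since $\mathcal{U}$ is smooth over $R_m$, its set of $R_m$-points is, after stratifying $\mathcal{U}_s$ over $F$, a trivial bundle in open balls over $\mathcal{U}_s$ — the triviality being obtained exactly by the Hilbert~90 argument used in the proof of the Lemma preceding Proposition~\ref{3} — so the class just computed equals $[\mathcal{U}_s]$, with no residual twist because $\mathcal{U}$ is defined over $R_m$. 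Applying $\Theta_0$, which sends the class of such a residue-field variety to its class in $K(\Var_F)/([\mathbb{A}^1]-1)$, yields $\Theta_0(\EU_{\Gamma, m}([X])) = [\mathcal{U}_s] = S(X(m))$.

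The main obstacle is the middle step: showing that, for a general smooth proper $X$ not presented as a $\b$-invariant, finite-to-one definable set, $\EU_{\Gamma, m}([X])$ is genuinely computed from the $K_m$-points of a smooth model. This requires reducing $X(\Oo)$ to the hypotheses of Proposition~\ref{3} through the invariant admissible transformations of Section~\ref{sec4}, while controlling boundedness and the $\hat\mu$-action throughout. Properness is what makes this possible, since it forces $X(\Oo)$ to exhaust the $K$-points at every ramification level and to match its $K_m$-points with the weak N\'eron model over $R_m$.
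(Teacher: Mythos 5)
Your overall strategy—base change to reduce to $m=1$, use a weak N\'eron model, and exploit that its $R_m$-points form a bundle of open balls over the special fibre—is the right one and matches the paper. But the central mechanism you propose does not work as stated, and the gap you yourself flag at the end of your writeup is in fact fatal to the argument as written.

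The difficulty is that $X$ (equivalently $X(\Oo)$, via properness) is \emph{not} $\beta$-invariant for any $\beta$: it is cut out by equations with coefficients in $K=F\llp t\rrp$, not over the residue field, so a small perturbation of a point on $X$ typically leaves $X$. The invariant admissible transformations of Proposition~\ref{2} do not repair this: the hypotheses of Proposition~\ref{2} (and of Proposition~\ref{3}) require $\beta$-invariance \emph{as input}; those transformations refine a given $\beta$-invariant set into elementary $\beta$-invariant pieces, they never convert a non-invariant set into an invariant one. Consequently, the claim that one can "replace each piece, up to its class in $K(\VF)$, by a $\b$-invariant definable subset" and then read off $\EU_{\Gamma,m}$ from the $K_m$-points via Proposition~\ref{3} is unjustified. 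Relatedly, $\EU_{\Gamma,m}([X])$ is a priori an invariant of the definable isomorphism class in $\ACVF$; the assertion that it is "genuinely computed from the $K_m$-points" is exactly what must be proved, and it is false for arbitrary definable sets, so some extra structure must intervene.

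What the paper does instead is decompose $X = X_1 \sqcup X_{\not= 1}$, where $X_1$ is the image of $\mathcal{X}(R')$ for a weak N\'eron model $\mathcal{X}$ of $X$ (after first reducing to $m=1$). The piece $X_1$ \emph{is} effectively $\beta$-invariant (it is a bundle of open polydisks over $\mathcal{X}_s$ by smoothness and Hensel's lemma), so for $X_1$ one has $\Theta_0(\EU_{\Gamma,1}([X_1])) = [\mathcal{X}_s] = S(X)$ essentially by construction—this is the part of your argument that is correct. The complement $X_{\not= 1}$ is the obstruction to applying Proposition~\ref{3} globally, and the key extra ingredient you are missing is a vanishing statement for it: since, by the N\'eron property, $X_{\not= 1}(F'\llp t\rrp)=\varnothing$ for every field extension $F'$ of $F$, one concludes $\EU_{\Gamma,1}([X_{\not=1}])=0$. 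This last step is precisely the paper's Lemma~\ref{empty} (which in turn reduces, via the decomposition $K_+(\RV[*]) \cong K_+(\RES[*]) \otimes K_+(\Gamma[*])$, to observing that absence of $F'\llp t\rrp$-points forces the $\RES$-class to have empty $\kk$-points). Without a lemma of this type there is no way to discard the part of $X$ not covered by the smooth model, and the identity $\Theta_0(\EU_{\Gamma,m}([X])) = S(X(m))$ does not follow.
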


\begin{proof}After replacing $F \llp t \rrp$ by $F \llp t^{1/m}\rrp$ we may assume $m = 1$.
Let $\mathcal{X}$ be a weak N\'eron model of $X$, cf. section 2.7 of \cite{LS}.
This means that $\mathcal{X}$
is a smooth $R$-variety endowed with an isomorphism $\mathcal{X}_K \to X$
such that the natural map
$\mathcal{X}(R^{sh}) \to X(K^{sh})$ is a bijection.
Consider the unique
definable subset $X_1$ of $X$ such that
for any valued field extension  $K'$ of $K$, with valuation ring $R'$,
$X_1 (K')$ is the image of
$\mathcal{X} (R')$ under the canonical
mapping $\mathcal{X} (R') \to X (K')$ (in fact $\mathcal{X}$ 
gives rise to a definable set and $X_1$ is its image through the natural map
$\mathcal{X} \to X$).
Let $X_{\not= 1}$ be the complement of $X_1$ in $X$.
By the very construction of  $\EU_{\Gamma, 1}$ and $S (X)$,
$\Theta_0 (\EU_{\Gamma, 1} ([X_1])) = S (X)$.
Thus it is enough to prove that
$\EU_{\Gamma, 1} ([X_{\not= 1}]) = 0$.
Since $X_{\not= 1} (F' \llp t \rrp) = \varnothing$
for every field extension $F'$ of $F$ by the N\'eron property of  $\mathcal{X}$, this follows from Lemma \ref{empty}.
\end{proof}

\begin{lem}\label{empty}Let $X$ be an $F \llp t \rrp$-definable subset of $\VF^n$.
Assume that $X (F' \llp t \rrp) = \varnothing$
for every field extension $F'$ of $F$. Then
$\EU_{\Gamma, 1} ([X]) = 0$.
\end{lem}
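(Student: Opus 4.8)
The plan is to reduce, by additivity and the reduction theory of \cite{HK}, to the case $X=\mathbb{L}(Z)$ with $Z$ an $F\llp t\rrp$-definable subset of $\RES^*$ having no $F'\llp t\rrp$-point, and then to combine Lemma~\ref{fixed} with an explicit lifting argument.

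First I would use the reduction to standard form of \cite{HK} (admissible transformations, together with the surjectivity of $\mathbb{L}$, Theorem~8.8 of \cite{HK}) to partition $X$ into finitely many $F\llp t\rrp$-definable pieces, each carrying an $F\llp t\rrp$-\emph{definable bijection} onto a standard form set $\mathbb{L}(H,r)$ with $H\subseteq\RV^*$ definable. Refining each such $H$ through the decomposition (\ref{deco}) and pulling back along $\mathbb{L}$, one may further arrange that each final piece of $X$ is definably isomorphic to either $\mathbb{L}(Z')$ with $Z'\subseteq\RES^*$ definable, or to $\mathbb{L}$ of a set of $\G$-degree $\geq 1$; the latter have $\GEU=0$, hence $\EU_{\Gamma,1}=0$, exactly as in the proof of Theorem~\ref{theocompat}. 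Since an $F\llp t\rrp$-definable bijection induces a bijection on $F'\llp t\rrp$-points for every field extension $F'/F$, and $X(F'\llp t\rrp)=\varnothing$, each piece of the partition again has no $F'\llp t\rrp$-point; and because $\rv\colon(F'\llp t\rrp)^\times\to\RV(F'\llp t\rrp)$ is surjective, $\mathbb{L}(Z')(F'\llp t\rrp)=\varnothing$ forces $Z'(F'\llp t\rrp)=\varnothing$. Hence it is enough to show $\EU_{\Gamma,1}([\mathbb{L}(Z)])=0$ whenever $Z$ is an $F\llp t\rrp$-definable subset of $\RES^*$ with $Z(F'\llp t\rrp)=\varnothing$ for all $F'/F$.

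For such a $Z$ one has $\GEU([\mathbb{L}(Z)])=[Z]$ in $\mathord{!K}(\RES)/([\mathbb{A}^1]-1)$ (as used already in the proof of Theorem~\ref{theocompat}), so Lemma~\ref{fixed} identifies $\EU_{\Gamma,1}([\mathbb{L}(Z)])$ with the class of the $\hat\mu$-fixed subset $Z^{\hat\mu}$ of $Z$, viewed as an $F$-constructible set via Proposition~\ref{Theta}. I claim $Z^{\hat\mu}=\varnothing$. Partition $Z$ into finitely many pieces, each contained in a single fibre $\prod_j V_{\gamma_j}$ with $\gamma_j\in\Qq$; a $\hat\mu$-fixed point of $Z$ must lie in the fixed sorts, i.e.\ each of its coordinates is either $0$ or lies in a $V_{\gamma_j}$ with $\gamma_j\in\Zz$. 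Untwisting such a point by the relevant integer powers of $t$ (which belong to $F\llp t\rrp$) and using that the value group of $\bar F\llp t\rrp$ is $\Zz$, one sees that any $\bar F$-point of $Z^{\hat\mu}$ lifts to a point of $Z(\bar F\llp t\rrp)$, contradicting the hypothesis. As a constructible set over $F$ with no $\bar F$-point is empty, $Z^{\hat\mu}=\varnothing$ and $\EU_{\Gamma,1}([\mathbb{L}(Z)])=0$; assembling the pieces yields $\EU_{\Gamma,1}([X])=0$.

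The main obstacle is the first step: one has to invoke the reduction of \cite{HK} as a \emph{genuine} finite partition of $X$ by definable bijections, not merely as an identity in $K(\VF)$, since only then does the rational-point hypothesis descend to the individual pieces. Once that is in place the rest is elementary, hinging on the identification of the $\hat\mu$-fixed $\RES$-sorts with exactly those that acquire $\bar F\llp t\rrp$-points.
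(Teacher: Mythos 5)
Your proof is correct and follows essentially the same route as the paper's: reduce via $\mathbb{L}$ and the decomposition $\Psi$ to the case $X=\mathbb{L}(Z)$ with $Z\subseteq\RES^*$, discard the $\G$-degree $\geq 1$ pieces because $\GEU$ already kills them, and show the integer-sort part of $Z$ is empty since any such point would lift (by surjectivity of $\rv$ on $F'\llp t\rrp$-points) to an $F'\llp t\rrp$-point of $X$. The one place you are more explicit — insisting the reduction be realized by a genuine finite partition and $F\llp t\rrp$-definable bijections so that the vacuity of $F'\llp t\rrp$-points descends to the pieces, and phrasing the final step via Lemma~\ref{fixed} and $Z^{\hat\mu}$ rather than directly via $\Delta_1$ — is exactly what the paper's terse ``we may assume $[X]=\mathbb{L}(\Psi(a\otimes b))$'' implicitly requires, so this is a worthwhile precision rather than a different argument.
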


\begin{proof}
Using the notation of (\ref{bbL}) and the isomorphism
(\ref{deco}), we may assume
$X$ is of the form
$[X] = \mathbb{L} (\Psi (a \otimes b))$
with
$a$
in
$K_+ (\RES [m])$
and
$b$ in $K_+ (\Gamma [r])$.
If $r \geq 1$,
$\GEU ([X]) = 0$ by construction of $\GEU$.
Thus, we may assume $r = 0$ and $b = 1$.
Let $k$ be an integer and $Z$ a definable subset in 
$\RES^k$ such that $a = [Z]$. By construction,
$Z$ and $\GEU (X)$ coincide in
$\mathord{!K}(\RES) / ( [\mathbb{A}^1] - 1)$.
On the other hand, if
$X (F' \llp t \rrp) = \varnothing$
for every field extension $F'$ of $F$,
then $Z \cap \kk^k = \varnothing$.
\end{proof}

In particular, we obtain the following:

\begin{cor}[\cite{NS}]\label{cortrace}Let $K = F \llp t \rrp$ with $F$ an algebraically closed field of characteristic zero.
Let $X$ be a smooth proper algebraic variety over $K$.
Then,  for every $m \geq 1$,
\begin{equation}
\tr (\varphi^m; H^{\bullet} (\bar X, \mathbb{Q}_{\ell}))
=
\chi_c (S (X (m))).
\end{equation}
\end{cor}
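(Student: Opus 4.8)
The plan is to deduce the corollary by combining Proposition~\ref{compserre}, which gives $\Theta_0\bigl(\EU_{\Gamma,m}([X])\bigr) = S(X(m))$ in $K(\Var_F)/([\mathbb{A}^1]-1)$, with the trace formula of Theorem~\ref{gentrace}, and with a $\hat\mu$-equivariant identification of $\etEU([X])$ with the geometric $\ell$-adic cohomology of the proper variety $X$. Although Theorem~\ref{gentrace} is stated only for definable subsets of $\VF^n$, the maps $\etEU$, $\EU_{\Gamma,m}$, $\Theta_0$ and $\chi_c$ are all additive and $K(\VF)$ is generated by classes of definable subsets of affine $K$-varieties, so after choosing a finite affine open cover of $X$ and applying inclusion--exclusion one obtains, for our proper $X$ as well,
\[
\tr(\varphi^m;\etEU([X])) = \chi_c\bigl(\Theta_0\circ\EU_{\Gamma,m}([X])\bigr).
\]
Together with Proposition~\ref{compserre} this already yields $\tr(\varphi^m;\etEU([X])) = \chi_c(S(X(m)))$, so it remains to prove $\tr(\varphi^m;\etEU([X])) = \tr(\varphi^m; H^{\bullet}(\bar X,\mathbb{Q}_{\ell}))$.

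For this, I would first note that, since $F$ is algebraically closed of characteristic zero, the field $\bar K = \cup_m K_m$ is algebraically closed (Puiseux) with $\mathrm{Gal}(\bar K/K) = \hat\mu$, so $H^{\bullet}(\bar X,\mathbb{Q}_{\ell})$ is genuine geometric cohomology carrying the $\hat\mu$-action of which $\varphi$ is a topological generator. Writing $[X]=\sum_I (-1)^{|I|+1}[U_I]$ for the inclusion--exclusion expansion attached to a finite affine open cover $(U_\alpha)$ of $X$ (each $U_I=\bigcap_{\alpha\in I}U_\alpha$ is affine since $X$ is separated), additivity of $\etEU$ reduces us to the affine pieces, where $\etEU([U_I]) = \sum_i(-1)^i[H^i_c(\overline{U_I^{an}},\mathbb{Q}_{\ell})]$. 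Berkovich's comparison theorem between the \'etale cohomology of a finite type $K$-scheme and that of its analytification (the circle of results recalled in Section~\ref{sec4} around \cite{berkv2}, Corollary~3.5), together with the invariance of \'etale cohomology under the extension $\widehat{\bar K}/\bar K$, provides $\hat\mu$-equivariant isomorphisms $H^i_c(\overline{U_I^{an}},\mathbb{Q}_{\ell}) \simeq H^i_c(\bar U_I,\mathbb{Q}_{\ell})$. Recombining by inclusion--exclusion, and using properness of $X$ so that $H^{\bullet}_c(\bar X,\mathbb{Q}_{\ell}) = H^{\bullet}(\bar X,\mathbb{Q}_{\ell})$, we get $\etEU([X]) = \sum_i(-1)^i[H^i(\bar X,\mathbb{Q}_{\ell})]$ in $K(\hat\mu\mathrm{-Mod})$; applying $\tr(\varphi^m;-)$ then finishes the proof.

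The step needing the most care is this last identification: one must check that Berkovich's comparison isomorphism intertwines the $\hat\mu$-action on $H^{\bullet}_c(\overline{U_I^{an}},\mathbb{Q}_{\ell})$ coming from the base change $\widehat\otimes\,\widehat{\bar K}$ with the $\mathrm{Gal}(\bar K/K)$-action on $H^{\bullet}(\bar X,\mathbb{Q}_{\ell})$, and that everything is compatible with the excision long exact sequences of Theorem~\ref{bb}(2) across the (non-affine) variety $X$. This is handled exactly by the comparison and finiteness inputs (\cite{berket}, \cite{berkv2}, and F.~Martin's Theorem~\ref{bb}) already used in the proof of Theorem~\ref{theocompat}, and is the only non-formal ingredient beyond Proposition~\ref{compserre} and Theorem~\ref{gentrace}.
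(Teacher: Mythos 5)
Your proposal follows essentially the same strategy as the paper's proof: combine Theorem~\ref{gentrace} with Proposition~\ref{compserre}, and identify $\etEU([X])$ with $\sum_i(-1)^i[H^i(\bar X,\Qq_\ell)]$ via Berkovich comparison plus properness. The paper is shorter because it invokes Corollary~7.5.4 of \cite{berket} directly on $X$ itself (obtaining $H^q(\bar X,\Qq_\ell)\simeq H^q(\overline{X^{an}},\Qq_\ell)$ for the non--compact-support groups), then trades $H^q$ for $H^q_c$ of $\overline{X^{an}}$ using that $X$ is proper; the fact that $\sum_i(-1)^i[H^i_c(\overline{X^{an}},\Qq_\ell)] = \etEU([X])$ is left implicit. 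Your route through a finite affine cover and inclusion--exclusion makes that implicit step explicit and is a reasonable way to present it, though it instead requires a comparison for compactly supported cohomology of each affine piece $U_I$; this is also covered by \cite{berket} but is strictly more than the ordinary-cohomology comparison the paper uses. One genuine slip: the reference you cite, Corollary~3.5 of \cite{berkv2}, is the nearby/vanishing-cycles comparison used earlier in the paper for the analytic Milnor fiber and is not the scheme-vs-analytification comparison needed here; the correct citation is Corollary~7.5.4 of \cite{berket} (for ordinary cohomology), supplemented by the corresponding compactly supported statement if you insist on working piece by piece. With that correction the argument is sound and matches the paper's in substance.
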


\begin{proof}By Corollary 7.5.4 of \cite{berket}, for every $q \geq 0$ there are canonical isomorphisms
$H^{q} (\bar X, \mathbb{Q}_{\ell}) \simeq  
H^{q} (\overline{X^{an}}, \mathbb{Q}_{\ell})$.
On the other hand, $X$ being proper, 
$H^{q} (\overline{X^{an}}, \mathbb{Q}_{\ell})$
is canonically isomorphic to
$H^{q}_c (\overline{X^{an}}, \mathbb{Q}_{\ell})$. Let $m \geq 1$. Using Proposition \ref{gentrace} one deduces that 
$\tr (\varphi^m; H^{\bullet} (\bar X, \mathbb{Q}_{\ell})) = \chi_c (\Theta_0 (\EU_{\Gamma, m} ([X])))$ and the result follows from   Proposition \ref{compserre}.
\end{proof}

The original proof  in
Corollary 5.5 \cite{NS}
of
Corollary \ref{cortrace} uses
resolution of singularities, which is not the case of the proof given here.

\begin{rema} Our results also provide
a new construction, not using resolution of singularities, of the motivic Serre invariant of arbitrary algebraic
varieties in equal characteristic zero. This motivic Serre invariant was
constructed in equal characteristic zero and mixed characteristic in Theorem 5.4 of \cite{Nicaise_crelle}, using resolution of singularities,
weak factorization and a refinement of the N\'eron smoothening process to
pairs of varieties. In equal characteristic zero, the trace formula extends to
arbitrary varieties by a formal additivity argument, see Theorem 6.4 and
Corollary 6.5 of  \cite{Nicaise_crelle}.
\end{rema}



\subsection{Analytic variants}
Assume again $R$ is a complete discrete valuation ring, with perfect residue field $F$ and field of fractions $K$.
In \cite{Nicaise_ma}, the construction of the motivic Serre invariant was extended to the class of generic fibers of generically smooth
special formal $R$-schemes.
Special formal $R$-schemes are obtained by gluing formal spectra of quotient of $R$-algebras of the form
$R \{T_1, \ldots, T_r\}\llb S_1, \ldots, S_s\rrb$, cf. \cite{Nicaise_ma}.
In particular, if $\mathcal{X}_{\eta}$ is such a generic fiber and $K = F \llp t \rrp$ with $F$ an algebraically closed field of characteristic zero,
then it follows from Theorem 6.4 of \cite{Nicaise_ma}, generalizing Theorem 5.4 of \cite{NS},
that, with the obvious notations,
for every $m \geq 1$,
\begin{equation}\label{last}
\tr (\varphi^m; H_c^{\bullet} (\bar{\mathcal{X}_{\eta}}, \mathbb{Q}_{\ell}))
=
\chi_c (S (\mathcal{X}_{\eta} (m))).
\end{equation}

In this setting it is natural to replace the theory $\ACVF (0,0)$ considered in the present paper by its rigid analytic variant
$\ACVF^R (0,0)$ introduced by Lipshitz in \cite{Lipshitz} and one may expect that the results from this section still hold for $\ACVF^R (0,0)$-definable sets.
It is quite likely that it should be possible to prove such extensions using arguments similar to ours once 
some
appropriate extension of Theorem \ref{bb}   to this analytic setting is established.
 In particular, one should be able to extend this way Proposition \ref{compserre} and Corollary \ref{cortrace} to 
generic fibers of generically smooth
special formal $R$-schemes. This would provide a proof of (\ref{last}) which would not use resolution of singularities, unlike the original proof in \cite{Nicaise_ma}.

\section{Recovering the motivic zeta function and the motivic Milnor fiber}\label{recov}

\subsection{Some notations and constructions from \cite{HK2}}

Let $A$ be an ordered abelian group and $n$ a non-negative integer.
An $A$-definable subset of $\Gamma^n$ will be called bounded if it is contained in $[-\gamma, \gamma]^n$ for some $A$-definable $\gamma \in \Gamma$.
An $A$-definable subset of $\Gamma^n$ will be called bounded below   if it is contained in $[\gamma, \infty)^n$ for some $A$-definable $\gamma \in \Gamma$.
We recall from \cite{HK2}, Definition 2.4, the definition of various
categories
$\Gamma_A [n]$,
$\Gamma^{\mathrm{bdd}}_A [n]$,
$\mathrm{vol}\Gamma_A [n]$
and
$\mathrm{vol}\Gamma^{\mathrm{bdd}}_A [n]$.
Thus,
$\Gamma_A [n]$ is the category already defined in \S\ref{azer},
$\Gamma^{\mathrm{bdd}}_A [n]$ is the  subcategory of bounded subsets
while
$\mathrm{vol}\Gamma_A [n]$ has the same objects  as $\Gamma_A [n]$
with morphisms $f \colon X \to Y$,
those morphisms in $\Gamma_A [n]$ such that
$\sum_i x_i = \sum_i y_i$ whenever $(y_1, \cdots, y_n) = f (x_1, \cdots, x_n)$,
$\mathrm{vol}\Gamma^{\mathrm{bdd}}_A [n]$
is the subcategory of $\mathrm{vol}\Gamma_A [n]$
whose objects are bounded below.
Finally, we denote by
$\mathrm{vol}\Gamma^{2\mathrm{bdd}}_A [n]$
 the subcategory of $\mathrm{vol}\Gamma_A [n]$ whose objects are  bounded.

We shall also consider the corresponding Grothendieck monoids $K_+ (\Gamma_A [n])$, $K_+ (\Gamma^{\mathrm{bdd}}_A [n])$, $K_+ (\mathrm{vol}\Gamma_A [n])$, $K_+ (\mathrm{vol}\Gamma^{\mathrm{bdd}}_A [n])$, and
$K_+ (\mathrm{vol}\Gamma^{2\mathrm{bdd}}_A [n])$.
We also set $K_+ (\Gamma^{\mathrm{bdd}}_A [\ast]) = \oplus_n K_+ (\Gamma^{\mathrm{bdd}}_A [n])$
with the associated ring $K (\Gamma^{\mathrm{bdd}}_A) $, and similar notation for the other categories.

Let $[0]_1$ denote the class of $\{0\}$ in $K_+ (\Gamma^{\mathrm{bdd}}_A [1])$.
We set \begin{equation}
K_+^{df} (\Gamma^{\mathrm{bdd}}_A) = (K_+ (\Gamma^{\mathrm{bdd}}_A[\ast]) [[0]_1^{-1}])_0,
\end{equation}
where $(K_+ (\Gamma^{\mathrm{bdd}}_A[\ast]) [[0]_1^{-1}])_0$
is the sub-semi-ring  of the graded semi-ring
$K_+ (\Gamma^{\mathrm{bdd}}_A[\ast]) [[0]_1^{-1}]$ consisting of elements of degree $0$.
One defines similarly
$K_+^{df} (\mathrm{vol}\Gamma^{\mathrm{bdd}}_A)$, $K_+^{df} (\mathrm{vol}\Gamma^{2\mathrm{bdd}}_A)$
and denote by
$K^{df} (\Gamma^{\mathrm{bdd}}_A)$, $K^{df} (\mathrm{vol}\Gamma^{\mathrm{bdd}}_A)$ and
$K^{df} (\mathrm{vol}\Gamma^{2\mathrm{bdd}}_A)$
the corresponding rings.

For
$x = (x_1, \dots, x_n) \in \RV^n$, set 
$\wt(x) =  \sum_{1 \leq i \leq n} \val_{\rv} (x_i)$. 
We recall from \cite{HK2}, Definition 3.14, the definition
of the categories $\mathrm{vol}\RV [n]$, $\mathrm{vol}\RES [n]$ and
$\mathrm{vol}\RV^{\mathrm{bdd}} [n]$, given a base structure $A$.
The category 
$\mathrm{vol}\RV [n]$ has the same objects of the category
 $\RV [n]$, namely pairs
$(X, f)$ with $X \subset \RV^*$ and $f \colon X \to \RV^n$ a morphism with finite fibers,
and a morphism $h \colon (X, f) \to (X', f')$ in $\mathrm{vol}\RV [n]$
is a definable bijection $h \colon X \to X'$ such that  $\wt (f (x)) = \wt ((f' \circ h) (x))$
for every $x \in X$.
The category $\mathrm{vol}\RES [n]$ is the full subcategory
of $\mathrm{vol}\RV [n]$ consisting of objects in $\RES [n]$
and  $\mathrm{vol}\RV^{\mathrm{bdd}} [n]$
is
the full subcategory of $\mathrm{vol}\RV [n]$ consisting of objects whose $\Gamma$-image is bounded below.
One defines $\mathrm{vol}\RV^{2\mathrm{bdd}} [n]$ as the subcategory of
$\mathrm{vol}\RV^{\mathrm{bdd}} [n]$ whose
$\Gamma$-image is bounded.
Similar notation as above for the various semi-rings and rings.

We have a map
\begin{equation}
K_+ (\mathrm{vol}\RES[n])
\longrightarrow
K_+ (\mathrm{vol}\RV^{\mathrm{bdd}}[n])
\end{equation}
induced by inclusion and
a map
\begin{equation}
K_+ (\mathrm{vol}\Gamma^{\mathrm{bdd}} [n])
\longrightarrow
K_+ (\mathrm{vol}\RV^{\mathrm{bdd}}[n])
\end{equation}
induced by $X \mapsto \rv^{-1} (X)$.
By \S 3.4 in \cite{HK2}, taking the tensor product,
one gets a canonical morphism
\begin{equation}\label{22}
\Psi \colon K_+ (\mathrm{vol}\RES[\ast]) \otimes
K_+ (\mathrm{vol}\Gamma^{\mathrm{bdd}}[\ast])
\longrightarrow
K_+ (\mathrm{vol}\RV^{\mathrm{bdd}}[\ast])
\end{equation}
whose kernel is the congruence relation generated by pairs
\begin{equation}\label{23}
( [\val_{\rv}^{-1} (\gamma)]_1 \otimes  1, 
1 \otimes [\gamma]_1),
\end{equation}
with $\gamma$ in $\Gamma$ definable.
Here the subscript 1 refers to the fact that the classes are considered in degree $1$.
Note that (\ref{22})
restricts to a morphism
\begin{equation}\label{22b}
\Psi \colon K_+ (\mathrm{vol}\RES[\ast]) \otimes
K_+ (\mathrm{vol}\Gamma^{2\mathrm{bdd}}[\ast])
\longrightarrow
K_+ (\mathrm{vol}\RV^{2\mathrm{bdd}}[\ast]).
\end{equation}

Similarly, cf. Proposition 10.10 of \cite{HK}, there is a
canonical morphism
\begin{equation}\label{25}
\Psi \colon K_+ (\mathrm{vol}\RES[\ast]) \otimes
K_+ (\mathrm{vol}\Gamma[\ast])
\longrightarrow
K_+ (\mathrm{vol}\RV [\ast])
\end{equation}
whose kernel is generated
by the elements (\ref{23}).

Consider the category
$\mathrm{vol}\VF [n]$ of Definition 3.20  in \cite{HK2}
and its bounded version
$\mathrm{vol}\VF^{\mathrm{bdd}} [n]$.
There is a lift of the mapping $\mathbb{L}$ to
a mapping \begin{equation}\mathbb{L}\colon
\mathrm{Ob} \, \mathrm{vol}\RV [n] \longrightarrow \mathrm{Ob} \,  \mathrm{vol}\VF [n] .
\end{equation}
We will denote by 
$I'_{\sp}$   the congruence generated by $[1]_1 = [\RV^{> 0}]_1$
in either $K_+ (\mathrm{vol}\RV [\ast])$ 
or $K_+ (\mathrm{vol} \RV^{\mathrm{bdd}}[\ast])$, or   in one of the monoids $K_+ (\mathrm{vol} \RV [n])$ or $K_+ (\mathrm{vol} \RV^{\mathrm{bdd}} [n])$;
the context will determine the ambient monoid or semi-ring.
By Lemma 3.21 of  \cite{HK2} and Theorems 8.28 and  8.29
of \cite{HK},
there are canonical isomorphisms
\begin{equation}
\int\colon
K_+ (\mathrm{vol} \VF [n])
\longrightarrow
K_+ (\mathrm{vol} \RV [n]) / I'_{\sp}
\end{equation}
and
\begin{equation} \label{8.1.10}
\int\colon
K_+ (\mathrm{vol} \VF^{\mathrm{bdd}} [n])
\longrightarrow
K_+ (\mathrm{vol} \RV^{\mathrm{bdd}} [n]) / I'_{\sp}
\end{equation}
which are characterized by the prescription that,
for $X$ in
$\mathrm{vol} \VF [n]$ and $V$ in
$\mathrm{vol} \RV [n]$ (resp. $\mathrm{vol} \VF^{\mathrm{bdd}} [n]$ and
$\mathrm{vol} \RV^{\mathrm{bdd}} [n]$),
$\int ([X])$ is equal to the class of $[V]$
in
$K_+ (\mathrm{vol} \RV [n]) / I'_{\sp}$ (resp.
$K_+ (\mathrm{vol} \RV^{\mathrm{bdd}} [n]) / I'_{\sp}$)
 if and only if
$[X] = [\mathbb{L} (V)]$. 
We denote similarly the corresponding isomorphisms between Grothendieck rings.

\subsection{The morphisms $h_m$ and $\tilde h_m$}
We go back to the framework of \ref{4.1}, thus the base structure is the field $L_0 = F\llp t \rrp$, with $F$ a trivially valued algebraically closed field of characteristic zero and $\val(t)$ positive and denoted by $1$.
For $\gamma \in \Gamma^n$, let ${\wt(\gamma)} = \sum_{1 \leq i \leq n} \gamma_i$.

Let $\mathbb{Z} [T, T^{-1}]_{\mathrm{loc}}$ denote the localisation of the
ring of Laurent polynomials $\mathbb{Z} [T, T^{-1}]$ with respect to the multiplicative family generated by the polynomials $1 - T^i$, $i \geq 1$.

Let $\Delta$ be a bounded definable subset of $\Gamma^n$.  
For every integer $m \geq 1$,
we set
\begin{equation}\label{alphasum}
\alpha_m (\Delta) = \tcb{(T - 1)^n}
\sum_{(\gamma_1, \dots, \gamma_n) \in \Delta \cap (m^{-1} \mathbb{Z})^n}
T^{- m {\wt(\gamma)}} 
\end{equation}
in
$\mathbb{Z} [T, T^{-1}]$.

Assume now $\Delta$ is a bounded below definable subset of $\Gamma^n$. 
The sum (\ref{alphasum}) is no longer finite, but it still makes sense has a Laurent  series, since in (\ref{alphasum})  only a finite number of terms have a given weight since $\Delta$ is bounded below.
\begin{lem}\label{sbrat}Let $\Delta$ be a bounded below definable subset of $\Gamma^n$.  
For every integer $m \geq 1$,
the Laurent series
\begin{equation}\label{alphasum2}
\tilde \alpha_m (\Delta) =  \tcb{(T - 1)^n}
\sum_{(\gamma_1, \dots, \gamma_n) \in \Delta \cap (m^{-1} \mathbb{Z})^n}
T^{- m {\wt(\gamma)}} 
\end{equation}
belongs to 
$\mathbb{Z} [T, T^{-1}]_{\mathrm{loc}}$.
\end{lem}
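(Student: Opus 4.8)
The plan is to reduce the statement to the rationality of the lattice‑point generating function of a rational polyhedron, and then to a geometric‑series computation.

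First I would eliminate $m$. The dilation $\gamma\mapsto m\gamma$ identifies $\Delta\cap(m^{-1}\mathbb{Z})^n$ with $(m\Delta)\cap\mathbb{Z}^n$, sends $T^{-m\,\wt(\gamma)}$ to $T^{-\wt(m\gamma)}$, and carries the bounded below definable set $\Delta$ to the bounded below definable set $m\Delta$; so it is enough to treat $m=1$, i.e. to show that $(T-1)^n\sum_{v\in\Delta\cap\mathbb{Z}^n}T^{-\wt(v)}$ lies in $\mathbb{Z}[T,T^{-1}]_{\mathrm{loc}}$. Here $\Delta\subseteq[\gamma_0,\infty)^n$ for some $\gamma_0$, which makes each fibre of $\wt$ on $\Delta\cap\mathbb{Z}^n$ finite (from $v_i\ge\gamma_0$ and a fixed value of $\sum_i v_i$, each $v_i$ is confined to a bounded interval), so the series is a well‑defined element of $\mathbb{Z}((T^{-1}))$; and since $\G(L_0)=\mathbb{Z}$ and multiplication by $(T-1)^n$ preserves $\mathbb{Z}[T,T^{-1}]_{\mathrm{loc}}$, the prefactor plays no role in the present lemma. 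Using the cell decomposition of definable subsets of $\G^n$ (as in the proof of Lemma \ref{chi0}) together with the finite additivity of $\Delta\mapsto\sum_{v\in\Delta\cap\mathbb{Z}^n}T^{-\wt(v)}$, I may assume $\Delta$ is a single cell; each cell is contained in $\Delta$, hence still bounded below, and after clearing denominators $\Delta\cap\mathbb{Z}^n=P\cap\mathbb{Z}^n$ for a rational polyhedron $P\subseteq[\gamma_0,\infty)^n$, which is line‑free with $\operatorname{rec}(P)\subseteq\mathbb{R}_{\ge 0}^n$.

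For such a $P$ one can invoke the classical fact (Stanley; see also Brion, or Denef's rationality of Poincar\'e series) that $\sum_{v\in P\cap\mathbb{Z}^n}x^v$ equals a rational function with $\mathbb{Z}$‑coefficient numerator and denominator a product of factors $1-x^{b}$, the $b$ being primitive generators of the extreme rays of $\operatorname{rec}(P)$; alternatively I would prove the case needed by induction on $n$, writing a cell as $\{(v',v_n):v'\in C',\ a(v')\,\square_1\,v_n\,\square_2\,b(v')\}$ with $C'$ a cell in $\G^{n-1}$, $a$ a $\mathbb{Q}$‑affine lower bound (we may take $a\ge\gamma_0$) and $b$ a $\mathbb{Q}$‑affine upper bound or $+\infty$, summing the inner geometric progression over $v_n\in\mathbb{Z}$ to get $(T^{-\lceil a(v')\rceil}-T^{-\lfloor b(v')\rfloor-1})/(1-T^{-1})$ (with the obvious modifications for strict inequalities and for the ray case), then partitioning $C'$ along the residues of $v'$ modulo a common denominator $q$ of the coefficients of $a,b$, so that $\lceil a\rceil$ and $\lfloor b\rfloor$ become honest $\mathbb{Z}$‑affine forms on each piece, each of which --- after the substitution $v'=v_0'+qw'$ --- is again a bounded below cell in $\G^{n-1}$; the inductive hypothesis, in the mild strengthening allowing $\wt$ to be replaced by any $\mathbb{Z}$‑affine form whose linear part is positive on every recession ray of the cell, then applies to both resulting sums over $C'$, and dividing by $1-T^{-1}$ (a unit times $T-1$) stays inside $\mathbb{Z}[T,T^{-1}]_{\mathrm{loc}}$.

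The main point to verify carefully, in either route, is that every denominator produced is of the form $1-T^{-k}$ with $k\ge 1$: this is exactly where the hypothesis that $\Delta$ is bounded below is used, for it forces each recession generator $b$ of the polyhedra involved to lie in $\mathbb{Z}_{\ge 0}^n\setminus\{0\}$, whence $\wt(b)=\sum_i b_i\ge 1$ and the specialization $x_i\mapsto T^{-1}$ sends $1-x^b$ to $1-T^{-\wt(b)}=-T^{-\wt(b)}(1-T^{\wt(b)})$, a unit in $\mathbb{Z}[T,T^{-1}]_{\mathrm{loc}}$. The remaining verifications --- that the formal series $\tilde\alpha_m(\Delta)$ is indeed the expansion of the resulting rational function, and the bookkeeping for strict inequalities and the ceiling/floor corrections --- are routine.
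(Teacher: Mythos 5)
Your argument matches the paper's proof in all essentials: you reduce to $m=1$ by dilation, reduce to (pieces of) rational polyhedra via a definable decomposition, appeal to the rationality of lattice-point generating functions of rational polyhedra (Brion/Stanley), and use boundedness below to force recession generators into $\mathbb{N}^n\setminus\{0\}$ so that the specialization $T_i\mapsto T^{-1}$ produces only denominators $1-T^{-k}$ with $k\geq 1$, i.e.\ units of $\mathbb{Z}[T,T^{-1}]_{\mathrm{loc}}$. The paper cites Brion and Ishida for the multivariate rationality statement (including the rational-polytope case explained in \S 3.3 of Brion) and then specializes, exactly as in your primary route; your additional self-contained induction on $n$ is a correct alternative to that citation but not needed.
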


\begin{proof}It is enough to prove the result for $m = 1$.
We may assume $\Delta$ is convex and closed. Thus, it is 
the convex hull of a finite family of rational half-lines and points in $\mathbb{Q}^n$, i.e. a rational polytope according to  the terminology of \cite{Brion}.
Consider the formal series
\[
\Phi_{\Delta} (T_1, \cdots, T_n) :=
\sum_{(\gamma_1, \dots, \gamma_n) \in \Delta \cap \mathbb{Z}^n} \prod T_i^{\gamma_i}.
\]
It follows from \cite{Brion} and \cite{Ishida} that
$\Phi_{\Delta} (T_1, \cdots, T_n)$ belongs to the localisation of 
$\mathbb{Z} [T_1, T_1^{-1}, \cdots, T_n, T_n^{-1}]$ with respect to the multiplicative family generated by
$1 - \prod T_i^{\gamma_i}$, $(\gamma_1, \dots, \gamma_n) \in \mathbb{Z}^n \setminus \{0\}$.
Indeed, the core of the paper \cite{Brion} deals with integral polytopes, but in its
\S 3.3  it is explained how  to deduce the statement for rational polytopes. Since $\Delta$ is bounded below, 
$\Phi_{\Delta} (T_1, \cdots, T_n)$ belongs in fact to the localisation of 
$\mathbb{Z} [T_1, T_1^{-1}, \cdots, T_n, T_n^{-1}]$ with respect to the multiplicative family generated by
$1 - \prod T_i^{\gamma_i}$, $(\gamma_1, \dots, \gamma_n) \in \mathbb{N}^n \setminus \{0\}$.
Thus one may consider the restriction of $\Phi_{\Delta}$ to the line $T = T_1 = \cdots = T_n$
which belongs to $\mathbb{Z} [T, T^{-1}]_{\mathrm{loc}}$ and is equal to 
$\tilde \alpha_m (\Delta)$ up to the factor $\tcb{(T - 1)^n}$.
\end{proof}

Let $\mathord{!K} ( \RES) ([{\mathbb{A}^1}]^{-1})_{\mathrm{loc}}$ denote the localisation of $\mathord{!K} ( \RES) ([{\mathbb{A}^1}]^{-1})$ with respect to the multiplicative family generated by the   
elements $1 - [{\mathbb{A}^1}]^i$, $i \geq 1$.
There are unique morphisms
$\theta : \mathbb{Z} [T, T^{-1}] \to \mathord{!K} ( \RES) ([{\mathbb{A}^1}]^{-1})$
and 
$\tilde \theta : \mathbb{Z} [T, T^{-1}]_{\mathrm{loc}} \to \mathord{!K} ( \RES) ([{\mathbb{A}^1}]^{-1})_{\mathrm{loc}}$
sending $T$ to $[{\mathbb{A}^1}]$.

If $\Delta$ is a bounded, resp. bounded below, definable subset of $\Gamma^n$,
we set
$a_m (\Delta) = \theta (\alpha_m (\Delta))$, resp.
$\tilde a_m (\Delta) = \tilde \theta (\tilde \alpha_m (\Delta))$.
By additivity, this gives rise to morphisms
\begin{equation}
a_m \colon K (\mathrm{vol} \Gamma^{2\mathrm{bdd}} [\ast])
\longrightarrow
\mathord{!K} ( \RES) ([{\mathbb{A}^1}]^{-1})
\end{equation}
and
\begin{equation}
\tilde a_m \colon K (\mathrm{vol} \Gamma^{\mathrm{bdd}} [\ast])
\longrightarrow
\mathord{!K} ( \RES) ([{\mathbb{A}^1}]^{-1})_{\mathrm{loc}}.
\end{equation}

Now consider $X = (X, f)$ in $\RES [n]$. Let $\gamma = (\gamma_1, \dots, \gamma_n)$ and 
assume $f (X) \subset V_{\gamma_1} \times \dots \times V_{\gamma_n}$.
Set
\begin{equation}
b^0_m (X) = [X]  
\Bigl(\frac{[1]_1}{[{\mathbb{A}^1}]}\Bigr)^{ m {\wt(\gamma)}}
\end{equation}
in
$
\mathord{!K} (\RES [\ast]) ([{\mathbb{A}^1}]^{-1})
$ if $m (\gamma_1, \dots, \gamma_n) \in \mathbb{Z}^n$
and
$b^0_m (X) = 0$ otherwise.
 Note that $f (X) =  f(X) \cap \RES_m$ in the first case.
This construction extends uniquely to a morphism
\begin{equation}
b^0_m \colon K (\mathrm{vol} \RES [\ast])
\longrightarrow
\mathord{!K}(\mathrm{vol} \RES [\ast]) ([{\mathbb{A}^1}]^{-1}).
\end{equation}
By composing $b^0_m $ with the canonical forgetful morphism
$\mathord{!K}(\mathrm{vol} \RES [\ast]) ([{\mathbb{A}^1}]^{-1}) \to \mathord{!K}( \RES) ([{\mathbb{A}^1}]^{-1})$,
one gets a morphism
 \begin{equation}
b_m \colon K (\mathrm{vol} \RES [\ast])
\longrightarrow
\mathord{!K}( \RES) ([{\mathbb{A}^1}]^{-1}).
\end{equation}
One denotes by   $\tilde b_m$ the morphism 
\begin{equation}
\tilde b_m \colon K (\mathrm{vol} \RES [\ast])
\longrightarrow
\mathord{!K}( \RES) ([{\mathbb{A}^1}]^{-1})_{\mathrm{loc}}
\end{equation}
obtained by composing
$b_m$ with the localisation morphism
$\mathord{!K}( \RES) ([{\mathbb{A}^1}]^{-1}) \to
\mathord{!K}( \RES) ([{\mathbb{A}^1}]^{-1})_{\mathrm{loc}}$.

 The  morphism
\begin{equation}b_m \otimes a_m \colon
K (\mathrm{vol} \RES [\ast]) \otimes
K  (\mathrm{vol}\Gamma^{2\mathrm{bdd}}[\ast])
\longrightarrow
\mathord{!K} (\RES) ([{\mathbb{A}^1}]^{-1})
\end{equation}
factors through the relations (\ref{23})
and gives rise to a morphism
\begin{equation}
h_m \colon K(\mathrm{vol} \RV^{2\mathrm{bdd}} [\ast])
\longrightarrow
\mathord{!K} (\RES) ([{\mathbb{A}^1}]^{-1}).
\end{equation}
Indeed,
if $\gamma = i / m$, then
$a_m ([\gamma]_1) = (\frac{1}{[{\mathbb{A}^1}]})^{i} \tcb{([\mathbb{A}^1] - 1)}$
and $[\val^{-1}_{\rv} (\gamma)]_1 = [{\mathbb{A}^1}]- [1]_1$
in
$\mathord{!K}_+(\mathrm{vol} \RES [1])$, thus
$a_m ([\gamma]_1) = b_m ([\val^{-1}_{\rv} (\gamma)]_1)$.
Similarly, 
the  morphism
\begin{equation}\tilde b_m \otimes \tilde a_m \colon
K (\mathrm{vol} \RES [\ast]) \otimes
K  (\mathrm{vol}\Gamma^{\mathrm{bdd}}[\ast])
\longrightarrow
\mathord{!K} (\RES) ([{\mathbb{A}^1}]^{-1})_{\mathrm{loc}}
\end{equation}
gives rise to a morphism
\begin{equation}
\tilde h_m \colon K(\mathrm{vol} \RV^{\mathrm{bdd}} [\ast])
\longrightarrow
\mathord{!K} (\RES) ([{\mathbb{A}^1}]^{-1})_{\mathrm{loc}}
\end{equation}
and the diagram
\begin{equation}\xymatrix{
K(\mathrm{vol} \RV^{2\mathrm{bdd}} [\ast]) \ar[r]^{h_m} \ar[d]_{}&\mathord{!K} (\RES) ([{\mathbb{A}^1}]^{-1})\ar[d]^{}\\
K(\mathrm{vol} \RV^{\mathrm{bdd}} [\ast]) \ar[r]^{\tilde h_m} & \mathord{!K} (\RES) ([{\mathbb{A}^1}]^{-1})_{\mathrm{loc}}}
\end{equation}
is commutative.

\begin{lem}For every $m \geq 1$, the morphism $\tilde h_m$ vanishes on the congruence $I'_{\sp}$.
\end{lem}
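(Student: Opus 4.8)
The plan is to exploit that $\tilde h_m$ is a ring morphism: it is assembled from the ring morphisms $\tilde b_m$ and $\tilde a_m$ through the decomposition $\Psi$ of \eqref{22}, and hence factors through the quotient of $K (\mathrm{vol}\RES[\ast]) \otimes K (\mathrm{vol}\Gamma^{\mathrm{bdd}}[\ast])$ by the relations \eqref{23}. Consequently, since $I'_{\sp}$ is the congruence generated by the single relation $[1]_1 = [\RV^{> 0}]_1$, it suffices to check that $\tilde h_m([1]_1) = \tilde h_m([\RV^{> 0}]_1)$; in fact both sides will turn out to equal $1$.

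First I would compute the left-hand side. The class $[1]_1$ is the degree-$1$ class of the point $1 \in \kk^{\times} \subset \RV$; since $\val_{\rv}(1) = 0$ it lies in $\RES[1]$, so under $\Psi$ it corresponds to $[\{1\}]_1 \otimes 1$, whence $\tilde h_m([1]_1) = \tilde b_m([\{1\}]_1) \cdot \tilde a_m(1)$. Here $\tilde a_m(1) = 1$, while in $\tilde b_m([\{1\}]_1)$ the weight $\wt(\gamma)$ attached to $\{1\}$ is $0$, so the twisting factor $([1]_1/[\mathbb{A}^1])^{m\wt(\gamma)}$ in the definition of $b^0_m$ is trivial; after the forgetful map to $\mathord{!K}(\RES)([\mathbb{A}^1]^{-1})_{\mathrm{loc}}$ the class $[\{1\}]_1$ becomes the class of a point, namely $1$. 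So $\tilde h_m([1]_1) = 1$.

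Next I would compute the right-hand side. We have $[\RV^{>0}]_1 = [\val_{\rv}^{-1}((0,\infty))]_1$, which under $\Psi$ corresponds to $1 \otimes [(0,\infty)]_1$ with $(0,\infty)$ a bounded-below definable subset of $\Gamma$; hence $\tilde h_m([\RV^{>0}]_1) = \tilde b_m(1) \cdot \tilde a_m([(0,\infty)]_1) = \tilde\theta(\tilde\alpha_m((0,\infty)))$. The remaining input is the geometric series identity
\[
\tilde\alpha_m((0,\infty)) = (T-1)\sum_{k \geq 1} T^{-k} = (T-1)\cdot\frac{1}{T-1} = 1
\]
in $\mathbb{Z}[T,T^{-1}]_{\mathrm{loc}}$ — meaningful by Lemma \ref{sbrat}, since $(0,\infty)\cap(m^{-1}\mathbb{Z}) = \{k/m : k\geq 1\}$ and $m\wt(k/m) = k$. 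Therefore $\tilde a_m([(0,\infty)]_1) = \tilde\theta(1) = 1$, so $\tilde h_m([\RV^{>0}]_1) = 1 = \tilde h_m([1]_1)$, and since $I'_{\sp}$ is generated by this single relation the morphism $\tilde h_m$ is constant on its congruence classes.

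There is no serious obstacle here; the only points requiring care are keeping track of the degree shift under $\Psi$ and checking that the normalizing factor $(T-1)^n$ built into $\alpha_m$ is exactly what cancels the pole $1/(T-1)$ produced by summing the (infinite, but weight-finite) series over the non-bounded set $(0,\infty)$. This cancellation is precisely the reason the lemma is stated for $\tilde h_m$ on $\RV^{\mathrm{bdd}}$, whereas the analogous statement for $h_m$ on $\RV^{2\mathrm{bdd}}$ would only involve finite sums.
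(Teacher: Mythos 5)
Your proof is correct and takes essentially the same approach as the paper: the key computation in both cases is the geometric-series identity $\tilde\alpha_m((0,\infty)) = (T-1)\sum_{k\geq 1}T^{-k} = 1$, giving $\tilde h_m([\RV^{>0}]_1) = 1 = \tilde h_m([1]_1)$. You spell out the $\Psi$-decomposition and the weight-zero computation for $[1]_1$ more explicitly than the paper does, but the substance is the same.
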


\begin{proof}
Indeed, if $\ell$ denotes the open half-line $(0, \infty)$ in  $\Gamma$,
$\tilde \alpha_m (\ell) = \tcb{(T - 1)} \sum_{i >0} T^{-i} = \tcb{1}$,
therefore $\tilde h_m ([\RV^{>0}]_1) = \tcb{1}$.
On the other hand, $h_m ([1]_1) = \tcb{1}$ by definition.
\end{proof}

It follows that the morphism
$\tilde h_m$ factors
 through a morphism
\begin{equation}
\tilde h_m \colon K (\mathrm{vol} \RV^{\mathrm{bdd}} [\ast]) / I'_{\sp}
\longrightarrow
\mathord{!K} ( \RES) ([{\mathbb{A}^1}]^{-1})_{\mathrm{loc}}.
\end{equation}
In particular, if
$\alpha$ and $\alpha'$ are two elements in
$ K (\mathrm{vol} \RV^{2\mathrm{bdd}} [\ast])$ with same image in
$ K  (\mathrm{vol} \RV [\ast]) /I'_{\sp}$, then $h_m (\alpha)$ and $h_m (\alpha')$
have the same image in $\mathord{!K} ( \RES) ([{\mathbb{A}^1}]^{-1})_{\mathrm{loc}}$.

Let us now state the analogue of Proposition \ref{3} in this context.

\begin{prop}\label{3vol}Let $m$ be a positive integer. Let  $n$ and $r$ be integers, let $\beta \in \G^n$ and
let $X$ be a $\b$-invariant $F \llp t \rrp$-definable subset of $\Oo^n \times \RV^{r}$. We assume that
$X$ is contained in $\VF^n \times W$ with $W$ a boundedly imaginary definable subset of $\RV^{r}$, and that $X_w$ is bounded, for every $w \in W$. We also assume that the projection $X \to \VF^n$ has finite fibers.
Then $\tilde h_m (\int ([X]))$ is equal to the image of the class $\widetilde X [m]$ as defined in \S \ref{4.2} in
$\mathord{!K} ( \RES) ([{\mathbb{A}^1}]^{-1})_{\mathrm{loc}}$.
\end{prop}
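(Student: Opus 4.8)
The proof will follow the same template as that of Proposition~\ref{3}: reduce via the invariant admissible transformations of Proposition~\ref{2} to a normal form, and then check the identity on the two building blocks coming from the tensor decomposition~\eqref{25}, namely the lifts from $\RES$ and from bounded-below subsets of $\Gamma$. First I would observe that both sides of the claimed equality are invariant under $\b$-invariant admissible bijections: the right-hand side $\widetilde X[m]$ is invariant by the remark just before Proposition~\ref{3} (an admissible bijection $f\colon X\to Y$ in $\mC(\b)$ induces a bijection $X[m;\b]\to Y[m;\b]$), and the left-hand side $\tilde h_m(\int([X]))$ is invariant because $\int$ is an isomorphism and, by the lemma just proved, $\tilde h_m$ factors through $K(\mathrm{vol}\,\RV^{\mathrm{bdd}}[\ast])/I'_{\sp}$. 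Hence by Proposition~\ref{2} I may assume $X$ is in standard form, i.e.\ a pullback
\[
X=\{(a,b)\in\VF^n\times H;\ \rv(a_i)=b_{h(i)},\ 1\le i\le n\}
\]
with $H$ a boundedly imaginary definable subset of $\RV^{\ell'}$ and $h\colon\{1,\dots,n\}\to\{1,\dots,\ell'\}$ finite-to-one on the induced projection $H\to\RV^n$; and by~\eqref{25} I may further write $[(H,r)]=\Psi([W]\otimes[\Delta])$ with $W$ in $\mathrm{vol}\,\RES[\ast]$ and $\Delta$ a bounded-below definable subset of $\Gamma^{\ast}$.

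**The two building blocks.** By induction on dimension and compatibility with products (using that both $\tilde h_m$ and the formation of $\widetilde{(-)}[m]$ are multiplicative, and that $X[m;\b]$ of a product is the product of the $[m;\b]$'s), it suffices to treat separately the case where $X$ is the canonical lift $\mathbb{L}(\val_{\rv}^{-1}(\Delta))$ of a bounded-below $\Delta\subset\Gamma^n$, and the case where $X=\mathbb{L}(Z)$ for $Z$ in $\mathrm{vol}\,\RES[n]$. For the $\Gamma$-block: on the one hand $\tilde h_m(\int([X]))=\tilde a_m(\Delta)=\tilde\theta(\tilde\alpha_m(\Delta))$ by the very definition of $\tilde h_m$ via $\tilde b_m\otimes\tilde a_m$; on the other hand, unwinding the definition of $X[m;\b]$ in~\S\ref{4.2}, the set $\val_{\rv}^{-1}(\Delta)(F\llp t^{1/m}\rrp)$ is supported on the $V_\gamma$ with $\gamma\in m^{-1}\Zz\cap\Delta$, and after dividing by $[\Aa^{\dots}]$ to pass to the $\b$-independent class $\widetilde X[m]$ each such $V_\gamma\cong\Aa^1$ contributes exactly the monomial $[\Aa^1]^{-m\,\wt(\gamma)}$, weighted by the factor $(T-1)^n\mapsto([\Aa^1]-1)^n$ coming from the $n$ copies of $\VF^{\times}$ lifted (equivalently from the $[\Gm]^n$ in the definition of $\chi[m]$); this is precisely $\theta$ applied to the finite-or-rational-function sum $\tilde\alpha_m(\Delta)$, and Lemma~\ref{sbrat} guarantees it lives in the localized ring. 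For the $\RES$-block: both $\tilde h_m(\int(\mathbb{L}(Z)))$ and $\widetilde{\mathbb{L}(Z)}[m]$ reduce to intersecting $Z$ with $\RES_{m^{-1}\Zz}$ and keeping track of the weight twist — on the $\tilde h_m$ side this is the definition of $b^0_m$ (multiplication by $([1]_1/[\Aa^1])^{m\wt(\gamma)}$, and the class is $0$ unless $m\gamma\in\Zz^n$), and on the $X[m;\b]$ side it is the observation already made in the proof of Proposition~\ref{3} that $X[m]$ corresponds to intersection with $\RES_{m^{-1}\Zz}$, together with the bijection $g(x)=x/\mathbf{t}_{k/m}$ from~\S\ref{3.4} that produces exactly the same power-of-$[\Aa^1]$ normalization.

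**Where the difficulty lies.** The formal skeleton is identical to Proposition~\ref{3}, so the genuinely new point — and the step I expect to absorb most of the care — is bookkeeping of the \emph{weights}: one must verify that the exponent $-m\,\wt(\gamma)$ appearing in $\tilde\alpha_m$ and in $b^0_m$ matches the power of $[\Aa^1]$ by which $[X[m;\b]]$ must be divided to define the $\b$-independent class $\widetilde X[m]$, i.e.\ that the normalization $m(\sum_i\b_i)-n$ in~\S\ref{4.2} is exactly compensated by the $\mathrm{vol}$-structure's recording of $\wt$. This is where the passage from $\b$ to $\b'$ (the intervening Lemma on $[X[m;\b']]=[X[m;\b]]\times[\Aa^{nm\sum(\b'_i-\b_i)}]$) and the compatibility of $\mathbb{L}$ with the volume form must be invoked carefully. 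A secondary subtlety is ensuring everything stays inside $\mathord{!K}(\RES)([\Aa^1]^{-1})_{\mathrm{loc}}$ rather than merely a completed ring: this is exactly what Lemma~\ref{sbrat} (rationality of $\tilde\alpha_m(\Delta)$ for $\Delta$ bounded below, via Brion--Ishida) is for, and one should note that the hypothesis that $X_w$ is bounded and the projection to $\VF^n$ is finite guarantees $\Delta$ is bounded below so that the lemma applies. Once the weight matching is checked on the two blocks, additivity and multiplicativity of all the maps involved conclude the proof.
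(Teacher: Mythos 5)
Your proposal follows the paper's proof in all essentials: reduce to standard form via Proposition \ref{2} using invariance of both sides under $\beta$-invariant admissible bijections, decompose via the volume tensor product into a $\RES$-block and a $\Gamma$-block, and check each block by direct computation (which the paper compresses to ``clear by construction''). The only small divergence is cosmetic: since $X_w$ is bounded, the hypotheses actually place $\Delta$ in $\mathrm{vol}\,\Gamma^{2\mathrm{bdd}}$, so the paper invokes the bounded decomposition \eqref{22b} and the finite sum $\alpha_m$ rather than, as you do, the bounded-below decomposition \eqref{25} with $\tilde\alpha_m$ and Lemma~\ref{sbrat}; your route is not wrong but imports rationality machinery that the stronger hypothesis makes unnecessary.
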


\begin{proof}  Since both sides are invariant under the transformations of Proposition \ref{2},
we may assume by Proposition \ref{2} that
there exists a
definable boundedly imaginary subset $H$ of $\RV^{r'}$
 and a map $h \colon \{1, \ldots, n \} \to\{1, \ldots, r' \}$
 such that
 \begin{equation}
 X = \{(a, b); b \in H, \rv (a_i) = b_{h (i)}, 1 \leq i \leq n \}
 \end{equation}
and
 the map  $r \colon H \to \RV^{n}$ given by
 $b \mapsto (b_{h(1)}, \ldots, b_{h (n)})$ is finite to one.
According to
(\ref{22b}) we may assume $[H] = \Psi ([W] \otimes [\Delta])$
with  $W$ in $\RES [r]$ and
$\Delta$ bounded  in $\Gamma [n - r]$.
By induction on dimension and considering products, it is enough to prove the result when $X$ is the lifting of $W$ or $\Delta$.
In both cases, this is clear by construction.
 \end{proof}

\begin{rema}\label{rema:vol}
 The definition of the morphisms of $\mathrm{vol}\VF [n]$ refers implicitly to the standard volume form
on $K^n$, restricted to $\Oo^n$.  When an $n$-dimensional variety is given without a specific embedding, we must specify a volume form
 since,  in principle, integrals depend on the form,
 up to multiplication by a definable function into $\mathbb{G}_m(\Oo)$.   
However, when $V$ is a smooth variety over $F$, with a volume form $\omega$ (a nowhere vanishing section of $\bigwedge^{\rm{top}} T V$) defined over $F$, and
$X$ is a bounded, $\b$-invariant $F\llp t \rrp$-definable subset of $V(\Oo)$,
  then $\int ([X])$ does not depend on the choice of $\omega$, as long as $\omega$ is chosen over $F$.  The reason is 
 that given another such form $\omega' $, we have   $\omega' = g \omega$ for some non-vanishing
  regular functions $g$ on $V$, defined over $F$. Thus, denoting by $\mathrm{red}$ the reduction mapping $V (\Oo) \to V$,  for  $u \in V$ we have $\mathrm{red} (g (u)) = g(\mathrm{red} (u)) \neq 0$ so $\val (g(u))=0$.
  In particular, we  shall refer to  $\int ([X]) \in K ({\rm{vol}} \RV [n])$ in this setting without further mention of the volume form.
\end{rema}


\subsection{Expressing the motivic zeta function}
Let $K^{\hat \mu}(\Var_F)_{\mathrm{loc}}$ denote the localisation of $K^{\hat \mu}(\Var_F)$ with respect to the multiplicative family generated by $[\mathbb{A}^1]$ and the   
elements $1 - [{\mathbb{A}^1}]^i$, $i \geq 1$. One defines similarly $K (\Var_F)_{\mathrm{loc}}$.
The isomorphism $\Theta$ of  (\ref{4.3.2}) induces  isomorphisms
\begin{equation}
 \Theta \colon \, \mathord{!K}(\RES)[ [\mathbb{A}^1]^{-1}]  \longrightarrow \, K^{\hat \mu}(\Var_F)[ [\mathbb{A}^1]^{-1}] \end{equation}
and
\begin{equation}
\Theta \colon \, \mathord{!K}(\RES)[ [\mathbb{A}^1]^{-1}]_{\mathrm{loc}}  \longrightarrow \, K^{\hat \mu}(\Var_F)_{\mathrm{loc}}.
\end{equation}

Let $X$ be a smooth connected algebraic variety of dimension $d$ over $F$
and $f$ a non-constant regular function
$f \colon X \to \mathbb{A}^1_F$.

For any $m \geq 1$, we consider 
  $ \mathcal{X}_{m, x}$ as defined in  (\ref{1.1.4})
\begin{equation}
\mathcal{X}_{m, x} =
\{\varphi \in X (\mathbb{C} [t] / t^{m + 1}); f (\varphi) = t^m \mod t^{m + 1}, \varphi (0) = x\}
\end{equation}
and $\mathcal{X}_x$ from Corollary
\ref{4}
\begin{equation}
\mathcal{X}_x  = \{y \in X (\Oo); \rv f(y) = \rv(t) \, \mathrm{and} \, \pi (y) = x \}.
\end{equation}

Recall that $\mathcal{X}_x$ is $\beta$-invariant for $\beta >0$.
After replacing $X$ by an affine open containing $x$, we may assume the existence of a  volume form on $X$ defined over $F$.
Thus, using the convention in 
Remark \ref{rema:vol}, we may consider
$\tilde h_m (\int ([\mathcal{X}_x]))$
in 
$ {\mathord{!K}(\RES})[ [\mathbb{A}^1]^{-1}]_{\mathrm{loc}}$.

We have the following interpretation for the class of
$ \mathcal{X}_{m, x}$.

\begin{prop}\label{4vol}
Let $X$ be a smooth  variety over $F$, $f$ be a regular function on $X$ and $x$ be a closed point of
$f^{-1} (0)$.
Then, for every integer $m \geq 1$,
\[
\Theta \Bigl(\tilde h_m  \Bigl( \int ([\mathcal{X}_x]) \Bigr)   \Bigr)
=
 [\mathcal{X}_{m, x}] \, [\mathbb{A}^{\tcb{m d}}]^{-1}
\]
in
$K^{\hat \mu}(\Var_F)_{\mathrm{loc}}$.
\end{prop}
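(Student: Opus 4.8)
The plan is to evaluate $\tilde h_m\circ\int$ on $[\mathcal X_x]$ by means of Proposition~\ref{3vol} and then to compute $\Theta$ explicitly, the delicate point being to keep track of the exact power of $[\mathbb{A}^1]$ (which is what distinguishes this statement from the $\chi_c$-level identity established in \S\ref{pft}).

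\emph{Reduction to a chart.} First I would replace $X$ by an affine open neighbourhood of $x$ carrying a system of \'etale coordinates $u=(u_1,\dots,u_d)\colon X\to\mathbb{A}^d_F$ with $u_i(x)=0$. By Remark~\ref{rema:vol}, $\int([\mathcal X_x])$ may be computed using the volume form $u^{\ast}(dx_1\wedge\cdots\wedge dx_d)$; for that form $u$ restricts to a volume preserving definable isomorphism $\pi^{-1}(x)\to\Mm^d$, carrying $\mathcal X_x$ to a bounded $F\llp t\rrp$-definable subset $\mathcal X'_x\subset\Mm^d\subset\Oo^d$ with injective projection to $\VF^d$, and $\int([\mathcal X_x])=\int([\mathcal X'_x])$.

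\emph{Application of Proposition~\ref{3vol}.} Next I would check that $\mathcal X'_x$ is $\b$-invariant for the specific $\b=((m+1)/m,\dots,(m+1)/m)$: over $K_m=F\llp t^{1/m}\rrp$ one has $\Mm_{K_m}=t^{1/m}\Oo_{K_m}$, the condition $\rv(f(z))=\rv(t)$ constrains $f(z)$ only modulo $t\Mm_{K_m}=t^{(m+1)/m}\Oo_{K_m}$, and a translation of $z\in\Mm_{K_m}^d$ by $(t^{(m+1)/m}\Oo_{K_m})^d$ changes $f(z)$ within $t^{(m+1)/m}\Oo_{K_m}$ (the first-order part of $f$ at such a point has $\Oo_{K_m}$-coefficients and the higher order terms are smaller still). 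This is where I expect to need care, since Corollary~\ref{4} only records $\b$-invariance ``for $\b>0$'' whereas the exponent below requires precisely this $\b$. Proposition~\ref{3vol} then yields $\tilde h_m(\int([\mathcal X'_x]))=\widetilde{\mathcal X'_x}[m]$ in $\mathord{!K}(\RES)([\mathbb{A}^1]^{-1})_{\mathrm{loc}}$, and by the definition of $\widetilde X[m]$ recalled in \ref{4.2}, $\widetilde{\mathcal X'_x}[m]=[\mathcal X'_x[m;\b]]\,[\mathbb{A}^1]^{-(m\sum_i\b_i-d)}=[\mathcal X'_x[m;\b]]\,[\mathbb{A}^{md}]^{-1}$, since $m\sum_i\b_i-d=d(m+1)-d=md$.

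\emph{Identification with $\mathcal X_{m,x}$.} It then remains to prove $\Theta([\mathcal X'_x[m;\b]])=[\mathcal X_{m,x}]$ in $K^{\hat\mu}(\Var_F)$. By construction $\mathcal X'_x[m;\b]$ is the coordinate-wise reduction of $\mathcal X'_x(K_m)$ modulo $t^{(m+1)/m}$, sitting inside $\bigl(t^{1/m}\Oo_{K_m}/t^{(m+1)/m}\Oo_{K_m}\bigr)^d\cong\bigoplus_{1\le i\le d}\bigoplus_{1\le k\le m}V_{k/m}$; as $X$ is smooth, jets lift, so this reduction equals $\{\varphi\in X(\Oo_{K_m}/t^{(m+1)/m});\ f(\varphi)\equiv t\bmod t^{(m+1)/m},\ \varphi(0)=x\}$ in the coordinates $u$, which after setting $s=t^{1/m}$ and $\Oo_{K_m}/t^{(m+1)/m}=F[s]/s^{m+1}$ is exactly $\mathcal X_{m,x}$ in the rewriting used in \S\ref{pft}. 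Applying $\Theta$ divides the component in $V_{k/m}$ by $\mathbf t_{k/m}$, i.e.\ reads off the coefficient of $s^k$, producing $\mathcal X_{m,x}$ as a constructible subset of $\mathbb{A}^{md}_F$ with the action $\varrho$ acting by $\zeta^k$ on the coefficient of $s^k$; this is precisely the monodromy $\mu_m$-action on $\mathcal X_{m,x}$ induced by $s\mapsto\zeta s$. Since $\Theta$ is a ring morphism with $\Theta([\mathbb{A}^1])=[\mathbb{A}^1]$ and extends to the localisations, combining the three steps gives $\Theta(\tilde h_m(\int([\mathcal X_x])))=[\mathcal X_{m,x}]\,[\mathbb{A}^{md}]^{-1}$ in $K^{\hat\mu}(\Var_F)_{\mathrm{loc}}$. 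The main obstacle is thus the bookkeeping of these last two steps --- getting the exponent $md$ and matching $\varrho$ with the monodromy action --- everything else being the localised refinement of the computation of \S\ref{pft}.
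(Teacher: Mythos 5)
Your proof is correct and follows essentially the same route as the paper's: reduce to a chart and invoke Remark~\ref{rema:vol}, apply Proposition~\ref{3vol} with $\beta = (1+1/m,\dots,1+1/m)$ to identify $\tilde h_m(\int([\mathcal X_x]))$ with $\widetilde{\mathcal X_x}[m]$, and match $\Theta([\mathcal X_x[m;\beta]])$ with $[\mathcal X_{m,x}]$ via the rewriting of \S\ref{pft}. The paper's proof is terser — it records the exponent computation in one line, cites Proposition~\ref{3vol} together with ``a similar argument as in Corollary~\ref{4},'' and states the identification with $\mathcal X_{m,x}$ without spelling out the $\mu_m$-equivariance — whereas you make the chart reduction, the $\beta$-invariance check at exactly $\beta=1+1/m$, the exponent bookkeeping, and the $\mu_m$-action matching fully explicit; this is a helpful expansion rather than a genuinely different argument.
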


\begin{proof}By definition, using notation from \ref{4.2},
\[ \widetilde{\mathcal{X}_x} [m] = [\mathcal{X}_x [m; 1 + 1/m]] \, [\mathbb{A}^{\tcb{m d}}]^{-1}.\]
It follows from Proposition \ref{3vol}, by a similar argument as the one in the proof of
Corollary
\ref{4}, that 
$\tilde h_m  ( \int ([\mathcal{X}_x ]))$ and $ \widetilde{\mathcal{X}_x} [m]$
have the same image in
$\mathord{!K}( \RES ) ([{\mathbb{A}^1}]^{-1})_{\mathrm{loc}}$.
On the other hand,
since, as already observed in  \ref{pft},
$\mathcal{X}_{m, x}$ is isomorphic to $ \{\varphi \in X (\mathbb{C} [t^{1/m}] / t^{(m+1)/m}) ; \rv( f(\varphi)) = \rv(t),  \, \varphi (0) = x\}$,
$\mathcal{X}_{m, x}$  and
$\Theta ([\mathcal{X}_x [m; 1 + 1/m]])$ have the same class in 
in $K^{\hat \mu}(\Var_F)$.
The result follows.
\end{proof}

The motivic zeta function $Z_{f, x}  (T)$ attached to $(f, x)$ is
the following generating function, cf. \cite{motivic}, \cite{barc},
\begin{equation}
Z_{f, x}  (T) = \sum_{m \geq 1} [\mathcal{X}_{m, x}] \, [\mathbb{A}^{md}]^{-1} \, T^m
\end{equation}
in
$K^{\hat \mu}(\Var_F) [ [\mathbb{A}^1]^{-1}] \llb T \rrb$.

Let $\iota \colon K^{\hat \mu}(\Var_F) [ [\mathbb{A}^1]^{-1}] \to K^{\hat \mu}(\Var_F)_{\mathrm{loc}}$
denote the localisation morphism.
Applying $\iota$ termwise to $Z_{f, x}  (T)$
we obtain a series $\tilde Z_{f, x}$ in
$K^{\hat \mu}(\Var_F)_{\mathrm{loc}} \llb T \rrb$.

Thus, by Proposition \ref{4vol}, $\tilde Z_{f, x}  (T)$ may  be expressed directly in terms of
$\mathcal{X}_x$:

\begin{cor}\label{5vol}
Let $X$ be a smooth  variety over $F$of dimension $d$, $f$ a regular function on $X$ and $x$ a closed point of
$f^{-1} (0)$.
Then,
\[
\tilde Z_{f, x}  (T)
=  
 \sum_{m \geq 1}
\Theta \Bigl(\tilde h_m  \Bigl( \int ([\mathcal{X}_x]) \Bigr)   \Bigr) \, T^m. \]
\end{cor}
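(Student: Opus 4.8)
The plan is to obtain the corollary as an immediate termwise consequence of Proposition \ref{4vol}, so that all the substantive work has already been done there. First I would unwind the definition of $\tilde Z_{f, x}(T)$: by construction it is the image of the motivic zeta function $Z_{f, x}(T) = \sum_{m \geq 1} [\mathcal{X}_{m, x}]\,[\mathbb{A}^{md}]^{-1}\,T^m$ under the termwise application of the localisation morphism $\iota \colon K^{\hat \mu}(\Var_F)[[\mathbb{A}^1]^{-1}] \to K^{\hat \mu}(\Var_F)_{\mathrm{loc}}$, so that
\[ \tilde Z_{f, x}(T) = \sum_{m \geq 1} \iota\bigl([\mathcal{X}_{m, x}]\,[\mathbb{A}^{md}]^{-1}\bigr)\,T^m \]
in $K^{\hat \mu}(\Var_F)_{\mathrm{loc}}\llb T \rrb$.

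Next, for each fixed $m \geq 1$ I would apply Proposition \ref{4vol}, which gives
\[ \Theta\Bigl(\tilde h_m\Bigl(\int([\mathcal{X}_x])\Bigr)\Bigr) = [\mathcal{X}_{m, x}]\,[\mathbb{A}^{md}]^{-1} \]
in $K^{\hat \mu}(\Var_F)_{\mathrm{loc}}$. The right-hand side of this identity is exactly $\iota\bigl([\mathcal{X}_{m, x}]\,[\mathbb{A}^{md}]^{-1}\bigr)$, since $\iota$ is by definition the canonical map to the localisation. Substituting this identity, coefficient by coefficient, into the series for $\tilde Z_{f, x}(T)$ above yields
\[ \tilde Z_{f, x}(T) = \sum_{m \geq 1} \Theta\Bigl(\tilde h_m\Bigl(\int([\mathcal{X}_x])\Bigr)\Bigr)\,T^m, \]
which is precisely the asserted formula.

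Since every step is a direct substitution, there is no genuine obstacle here; the one point worth a word of care is that $\int([\mathcal{X}_x])$ is well-defined, which uses the $\beta$-invariance of $\mathcal{X}_x$ for $\beta > 0$ noted in Corollary \ref{4}, together with the observation of Remark \ref{rema:vol} that for $X$ smooth over $F$ the class $\int([\mathcal{X}_x])$ does not depend on the choice of a volume form defined over $F$ — which is why one first replaces $X$ by an affine open containing $x$ admitting such a form.
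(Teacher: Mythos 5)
Your argument is correct and is exactly the paper's intended route: the corollary is stated there as an immediate consequence of Proposition \ref{4vol}, obtained by applying $\iota$ termwise to the defining series of $Z_{f,x}(T)$ and substituting the identity $\Theta(\tilde h_m(\int([\mathcal{X}_x]))) = [\mathcal{X}_{m,x}][\mathbb{A}^{md}]^{-1}$ coefficient by coefficient. Your closing remark about $\beta$-invariance and independence of the choice of volume form (Remark \ref{rema:vol}) is the same well-definedness caveat the paper records just before stating Proposition \ref{4vol}, so nothing is missing.
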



\subsection{Rational series}
Let $R$ be a ring and let $\Aa$ be an invertible element in $R$.
We consider the ring $R [T]_{\dagger}$ (resp. 
$R [T, T^{-1}]_{\dagger}$)
which is the 
localization of $R [T]$ (resp. 
$R [T, T^{-1}]$) with respect to the multiplicative
family generated by $1 - \Aa^a T^b$, $a \in \Zz$, $b \geq 1$.
By expanding into powers in $T$ one gets a morphism
\begin{equation}
{e}_T  \colon R [T]_{\dagger} \longrightarrow R \llb T \rrb
\end{equation}
which is easily checked to be injective. 
We shall  identify an element in $R [T]_{\dagger}$ 
with its image in $R \llb T \rrb$.
If $h = P/Q$  belongs to 
$R [T]_{\dagger}$, the difference
$ \deg (P) - \deg (Q)$ depends only on $h$, thus will be denoted $\deg (h)$.
If $\deg (h) \leq 0$,
we define
$\lim_{T \to \infty} h$ as follows.
If $\deg (h) < 0$, we set $\lim_{T \to \infty} h = 0$.
If $h = P / Q$ with $P$ and $Q$ of degree $n$, let $p$ and $q$ be the leading coefficients of $P$ and $Q$.
Since $q$ is of the form $\varepsilon \Aa^a$ for some $a \in \Zz$ and $\varepsilon \in \{-1, 1\}$, we may set
$\lim_{T \to \infty} h = p \varepsilon \Aa^{-a}$, which is independent from the choice of $P$ and $Q$.

Since
\begin{equation}
\frac{1}{1 - \Aa^a T^b} = - \frac{ \Aa^{-a} T^{-b}}{1 -\Aa^{-a} T^{-b}},
\end{equation}
one may also expand elements of $R [T]_{\dagger}$
into powers of $T^{-1}$, giving rise to a morphism
\begin{equation}
{e}_{T^{-1}} \colon R [T]_{\dagger} \longrightarrow R \llb T^{-1} \rrb [T].
\end{equation}
In particular,  if $h$  belongs to 
$R [T]_{\dagger}$, $\deg (h) \leq 0$ if and only if ${e}_{T^{-1}} (h)$ belongs to 
$R \llb T^{-1} \rrb $. Furthermore, in this case
$\lim_{T \to \infty} h$ is equal to 
the constant term of ${e}_{T^{-1}} (h)$.

If $f (T) = \sum_{n \geq 0} a_n T^n$ and
$g (T) = \sum_{n \geq 0} b_n T^n$ are two series
in 
$R \llb T \rrb$ one defines their Hadamard product as
$(f \ast g)  (T) = \sum_{n \geq 0} a_n b_n T^n$.

\begin{lem}\label{convol}
Let $h$ and $h'$ belong to  $R [T]_{\dagger}$.
Set $\varphi = {e}_T (h)$,
$\varphi' = {e}_T (h')$.
\begin{enumerate}
\item[(1)]There exists a \textup{(}unique\textup{)} element $\tilde h$ in $R [T]_{\dagger}$
such that
${e}_T (\tilde h) = \varphi \ast \varphi'$.
\item[(2)]Assume that $\varphi$ and $\varphi' $ belong to $TR \llb T \rrb$,
and that
$\deg (h), \deg (h') \leq 0$.
Then 
$\deg (\tilde h) \leq 0$
and
\[
\lim_{T \to \infty} \tilde h = - \lim_{T \to \infty} h  \cdot \lim_{T \to \infty} h'.
\]
\end{enumerate}
\end{lem}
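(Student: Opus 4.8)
The plan is as follows. In part~(1) uniqueness of $\tilde h$ is immediate from the injectivity of $e_T$ recalled above, so only existence is at stake, and it suffices to prove the following closed-ness statement: the subring $R[X]_\flat:=R[X]\bigl[(1-\Aa^aX)^{-1}:a\in\Zz\bigr]$ of $R[T]_\dagger$ (denominators of exponent $1$) is stable under the coefficientwise product $\ast$ of Taylor expansions at $0$. Indeed, given arbitrary $h,h'\in R[T]_\dagger$, two elementary moves reduce to this. First, using $(1-\Aa^aT^b)\bigl(1+\Aa^aT^b+\cdots+\Aa^{a(c-1)}T^{b(c-1)}\bigr)=1-\Aa^{ac}T^{bc}$, one rewrites $h$ and $h'$ so that all denominators occurring in $h$, respectively in $h'$, use one common exponent, and then, passing to $L$ the least common multiple of the two exponents, so that both use the exponent $L$. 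Second, splitting numerators according to the residue of the exponent modulo $L$, one obtains $h=\sum_{s=0}^{L-1}T^sG_s(T^L)$ and $h'=\sum_{s=0}^{L-1}T^sG'_s(T^L)$ with $G_s,G'_s\in R[X]_\flat$. Since $e_T(h)_{Ln+s}$ is the coefficient of $X^n$ in $G_s$, one computes $\varphi\ast\varphi'=\sum_{s}T^s\,(G_s\ast G'_s)(T^L)$, which lies in $R[T]_\dagger$ as soon as each $G_s\ast G'_s$ lies in $R[X]_\flat$.

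To prove the closed-ness statement I would use linear recurrences. If $G=P/Q\in R[X]_\flat$ with $Q=\prod_{l=1}^{N}(1-\Aa^{a_l}X)$ (repetitions allowed to absorb higher powers) and $Q(0)=1$, then $\gamma:=e_X(G)$ satisfies, for indices $>\deg P$, the linear recurrence whose characteristic polynomial is the reversal $\widehat Q(Y)=\prod_l(Y-\Aa^{a_l})$; the essential feature is that its roots $\Aa^{a_l}$ are \emph{units} of $R$. After a common finite shift $\gamma$ generates a finitely generated $R$-submodule $\Gamma\subset R^{\Nn}$, stable under the shift operator $S$ and annihilated by $\widehat Q(S)$; likewise $\Gamma'$, $\widehat{Q'}(S)$ for $G'$. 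On $\Gamma\otimes_R\Gamma'$ the commuting operators $S\otimes 1$ and $1\otimes S$ are annihilated by $\widehat Q$ and $\widehat{Q'}$, and here one uses the ring-theoretic fact that \emph{for commuting operators $C,D$ with $\prod_l(C-u_l)=0$, $\prod_m(D-v_m)=0$ and all $u_l$ invertible, one has $\prod_{l,m}(CD-u_lv_m)=0$} — this holds because $\prod_m(CD-u_lv_m)=u_l^{N'}\widehat{Q'}(u_l^{-1}CD)$, while $\widehat{Q'}(u_l^{-1}CD)-\widehat{Q'}(D)$ is divisible, in the commutative subring generated by $C,D$, by $u_l^{-1}CD-D=u_l^{-1}(C-u_l)D$, so the product over $m$ is divisible by $C-u_l$ and the product over all $l,m$ by $\prod_l(C-u_l)=0$. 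Applied to $C=S\otimes 1$, $D=1\otimes S$ this shows $S\otimes S$ is annihilated by $\widehat{Q''}(Y):=\prod_{l,m}(Y-\Aa^{a_l+a'_m})$. Finally the $R$-bilinear map $(u,v)\mapsto(u_nv_n)_n$ factors through $\Gamma\otimes_R\Gamma'$ and intertwines $S\otimes S$ with $S$, so $(\gamma_n\gamma'_n)_n$ is annihilated by $\widehat{Q''}(S)$, i.e. its generating series is $P''/Q''$ with $Q''=\prod_{l,m}(1-\Aa^{a_l+a'_m}X)\in R[X]_\flat$. This gives $G\ast G'\in R[X]_\flat$, hence~(1).

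For part~(2) I would run the same reduction, keeping track of the extra hypotheses. The condition $\deg(h)\le 0$ becomes, after the splitting, $\deg_X G_0\le 0$ and $\deg_X G_s<0$ for $1\le s\le L-1$ (the graded pieces $T^sG_s(T^L)$ have pairwise distinct degrees modulo $L$, so nothing cancels), while $\varphi\in T R\llb T\rrb$ forces $G_0(0)=0$; similarly for $h'$. So (2) reduces to one-variable assertions about $G,G'\in R[X]_\flat$: (a) if $\deg_X G,\deg_X G'\le 0$ then $\deg_X(G\ast G')\le 0$, with $\deg_X(G\ast G')<0$ as soon as $\deg_X G<0$; (b) if moreover $G(0)=G'(0)=0$ then $(G\ast G')(\infty)=-G(\infty)G'(\infty)$. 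For~(a): $\deg_X G\le 0$ gives $\deg P\le\deg Q=N$, hence $\widehat Q(S)\gamma$ is supported at index $0$ and $\widehat Q(S)(S\gamma)=0$; running the tensor argument of~(1) on $S\gamma,S\gamma'$ shows $(S\gamma)\ast(S\gamma')=S\bigl((\gamma_n\gamma'_n)_n\bigr)$ is annihilated by $\widehat{Q''}(S)$, so $\widehat{Q''}(S)\bigl((\gamma_n\gamma'_n)_n\bigr)$ is supported at index $0$, i.e. $\deg P''\le\deg Q''=NM$; and when $\deg_X G<0$ one has $\widehat Q(S)\gamma=0$ outright, whence $\deg P''\le NM-1$. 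For~(b), evaluating $\widehat{Q''}(S)\bigl((S\gamma)\ast(S\gamma')\bigr)=0$ at index $-1$ with the natural extension $(S\gamma)_{-1}=\gamma_0=0$ expresses the top coefficient $[X^{NM}]P''$ as $\sum_{j\ge 1}q''_{NM-j}\gamma_j\gamma'_j$, and a direct computation — using $\gamma_0=\gamma'_0=0$ and the analogous evaluations for $\gamma$ and $\gamma'$ separately, which give $G(\infty)\cdot[X^N]Q$ and $G'(\infty)\cdot[X^M]Q'$ — identifies $[X^{NM}]P''/[X^{NM}]Q''$ with $-G(\infty)G'(\infty)$; since $[X^{NM}]Q''=\prod_{l,m}(-\Aa^{a_l+a'_m})$ is the relevant signed power of $\Aa$, this is exactly $\lim_{T\to\infty}\tilde h=-\lim_{T\to\infty}h\cdot\lim_{T\to\infty}h'$.

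\textbf{The main difficulty} I anticipate is part~(2): over a general commutative ring $R$ there are neither contour integrals nor partial-fraction decompositions of $1/Q$, so the classical picture — that $G\ast G'=-\sum_{\rho}\frac{c_\rho}{\rho}\,G'(X/\rho)$, a sum over the poles of $G$, manifestly regular at $\infty$ with value $-G(\infty)G'(\infty)$ once $G(0)=0$ — is only a heuristic. Replacing it by the recurrence bookkeeping above is what makes the proof go through, and the single indispensable ring-theoretic input is the ``composition of annihilating polynomials for commuting operators'' statement, which is precisely where the invertibility of $\Aa$ enters.
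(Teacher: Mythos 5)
Your Part~(1) argument is correct and gives a self-contained elementary proof where the paper simply cites Propositions 5.1.1 and 5.1.2 of Denef--Loeser \cite{DLduke} and their proofs. The reduction modulo a common period $L$, the closure of $R[X]_\flat$ under $\ast$ via the annihilator-polynomial lemma for commuting operators, and the crucial use of invertibility of $\Aa$ are all sound; this is a genuinely different route that buys independence from the external reference.

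However, there is a real gap in Part~(2), specifically in assertion~(b). After reducing to showing that $\sum_{j\ge 1}q''_{NM-j}\gamma_j\gamma'_j = -G(\infty)G'(\infty)\,q''_{NM}$, you claim this follows from ``a direct computation'' using $\gamma_0=\gamma'_0=0$ and the two single-variable identities $\sum_{j\ge 1}q_{N-j}\gamma_j=G(\infty)\,q_N$ and $\sum_{k\ge 1}q'_{M-k}\gamma'_k=G'(\infty)\,q'_M$. But these three sums are not in any algebraic relation that would allow such a deduction: the target sum has coefficients $q''_{NM-j}$ built from the products of the roots of $\widehat Q$ and $\widehat{Q'}$ and runs over the diagonal $\gamma_j\gamma'_j$, whereas the product of the two given identities produces the full double sum $\sum_{j,k}q_{N-j}q'_{M-k}\gamma_j\gamma'_k$ with off-diagonal cross terms, and there is no cancellation scheme turning one into the other. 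Indeed the target identity is not a formal consequence of those two constraints alone, because over a field there is an $(N-1)$-parameter family of $\gamma$'s (with $\gamma_0=0$) satisfying the recurrence and realizing any prescribed value of $\sum q_{N-j}\gamma_j$, and the diagonal sum varies within that family. The missing ingredient is precisely the one the paper imports from \cite{DLduke}: the relation between Hadamard products and Laurent expansion at infinity. In your bookkeeping language the repair is to introduce the Laurent expansion $\delta=e_{X^{-1}}(G)$, observe that $\deg G\le 0$ makes $\delta$ supported on $n\le 0$ with $\delta_0=G(\infty)$, that $\eta:=\gamma-\delta$ solves $\widehat Q(S)\eta=0$ as a bi-infinite sequence (using invertibility of $\widehat Q(0)$), that your tensor lemma then gives $\widehat{Q''}(S)(\eta\eta')=0$ globally, and finally that $\gamma_0=\gamma'_0=0$ kills the mixed terms so $\eta_n\eta'_n=\gamma_n\gamma'_n+\delta_n\delta'_n$ for all $n$; evaluating $\widehat{Q''}(S)$ at index $0$ then yields $[X^{NM}]P''+q''_{NM}G(\infty)G'(\infty)=0$, which is the statement. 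Without some version of this step your proof of (2)(b) is incomplete.
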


\begin{proof}Assertion (1) follows from 
Propositions 5.1.1 and 5.1.2 of \cite{DLduke} and their proofs.
Indeed, by (the proof of) Proposition 5.1.1  of \cite{DLduke},
there exists $\tilde h \in R [T, T^{-1}]_{\dagger}$
such that ${e}_T (\tilde h)  =  \varphi \ast \varphi'$ (with ${e}_T$ extended to a morphism
$R [T, T^{-1}]_{\dagger} \to R\llb T \rrb[T^{-1}]$).
But this forces $\tilde h $ to belong in fact to  $R [T]_{\dagger}$.
By (the proof of) Proposition 5.1.1  of \cite{DLduke}, cf. also  Proposition 5.1.2 of \cite{DLduke}
and its proof, it follows from the assumptions in (2) that 
\[
({e}_{T^{-1}} (\tilde h)) (T^{-1}) = - ({e}_{T^{-1}} (h)) (T^{-1}) \ast ({e}_{T^{-1}}  (h')) (T^{-1}).
\]
Thus
$\deg (\tilde h) \leq 0$
and
$
\lim_{T \to \infty} \tilde h = - \lim_{T \to \infty} h  \cdot \lim_{T \to \infty} h'$.
\end{proof}

When $\varphi \in R \llb T \rrb$ is of the form
${e}_T (h)$ with $h \in R [T]_{\dagger}$, we shall say
$\lim_{T \to \infty} \varphi$ {\em exists} if 
$\deg (h) \leq 0$, and set
$\lim_{T \to \infty} \varphi = \lim_{T \to \infty} h$.

\subsection{Expressing  the motivic Milnor fiber}\label{emmf}

We consider 
the rings
$K^{\hat \mu}(\Var_F) [ [\mathbb{A}^1]^{-1}] [T]_{\dagger}$
and
$K^{\hat \mu}(\Var_F)_{\mathrm{loc}} [T]_{\dagger}$
with $\Aa =  [\mathbb{A}^1]$.
More generally, in this section, when we write $R[T]_{\dagger}$
it will always be with $\Aa =  [\mathbb{A}^1]$.

It is known that the motivic zeta function $Z_{f, x} (T)$ belongs to 
$K^{\hat \mu}(\Var_F) [ [\mathbb{A}^1]^{-1}][T]_{\dagger}$ and that
$\lim_{T \to \infty} Z_{f, x} (T)$ exists, cf. \cite{barc}, \cite{seattle}.

One sets
\begin{equation}\label{def:mmf}
\mathcal{S}_{f, x}= - \lim_{T \to \infty} Z_{f, x}  (T).
\end{equation}
This element of $K^{\hat \mu}(\Var_F) [ [\mathbb{A}^1]^{-1}]$  is the motivic Milnor fiber considered in \cite{barc}, \cite{seattle}.
We shall  show in Corollary \ref{limitcor} how one may extract directly
the image of $\mathcal{S}_{f, x}$ in $K^{\hat \mu}(\Var_F)_{\mathrm{loc}}$ from $\int ([\mathcal{X}_{x}])$.

\medskip

Let $\chi$ denote the o-minimal Euler
characteristic. 
There exists a unique morphism
\begin{equation}
\alpha \colon
K (\mathrm{vol} \Gamma [\ast]) \longrightarrow
\mathord{!K} ( \RES) ([{\mathbb{A}^1}]^{-1})
\end{equation}
which, for every $n \geq 0$,  sends the class of $\Delta$ in
$K (\mathrm{vol} \Gamma [n])$
to $\chi (\Delta) \tcb{( [{\mathbb{A}^1}] - 1)^n}$, and a unique morphism
\begin{equation}
\beta \colon
K (\mathrm{vol} \RES [\ast]) \longrightarrow
\mathord{!K} ( \RES) ([{\mathbb{A}^1}]^{-1})
\end{equation}
which, for every $n \geq 0$,   sends the class of $Y$ in $K (\mathrm{vol} \RES [n])$
to $[Y]$.  

Taking the tensor product of $\alpha$ and $\beta$
one gets a morphism
\begin{equation}
\Upsilon \colon
K (\mathrm{vol} \RV [\ast]) \longrightarrow
\mathord{!K} ( \RES ) ([{\mathbb{A}^1}]^{-1})
\end{equation}
since the relations (\ref{23}) in the kernel of
 the morphism (\ref{25}) are respected.
One defines similarly a   morphism
\begin{equation}\Upsilon \colon K (\mathrm{vol} \RV^{2\mathrm{bdd}} [\ast]) \longrightarrow
\mathord{!K} ( \RES ) ([{\mathbb{A}^1}]^{-1}).\end{equation}

\begin{prop}\label{limit}Let $Y$ be in $K (\mathrm{vol} \RV^{2\mathrm{bdd}} [\ast])$.
The series
\[Z (Y)   (T)
= \sum_{m \geq 1}
 h_m  (Y)    \, T^m
\]
in $\mathord{!K} (\RES) ([{\mathbb{A}^1}]^{-1}) \llb T \rrb$
belongs to $\mathord{!K} (\RES) ([{\mathbb{A}^1}]^{-1}) [T]_{\dagger}$, $\lim_{T \to \infty} Z (Y)   (T) $ exists  and
\[
\lim_{T \to \infty} Z (Y)   (T)
=
 - \Upsilon (Y).
\]
\end{prop}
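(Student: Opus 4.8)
The plan is to reduce, by additivity and the product decomposition of $K(\mathrm{vol}\RV^{2\mathrm{bdd}}[*])$, to an iterated Hadamard product of one‑variable series and then invoke Lemma~\ref{convol}. Write $R=\mathord{!K}(\RES)([\mathbb{A}^1]^{-1})$. Since $h_m$ and $\Upsilon$ are ring morphisms and the morphism $\Psi$ of \eqref{22b} is surjective, additivity reduces the statement to $Y=\Psi([W]\otimes[\Delta])$ with $W$ a definable subset of $\RES^{*}$ and $\Delta$ a bounded definable subset of $\Gamma^{p}$; after partitioning $W$ we may further assume its image under $\val_{\rv}$ is a single point $\gamma_{W}$, all of whose coordinates have a common denominator $\delta$. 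Then $h_{m}(Y)=b_{m}([W])\,a_{m}([\Delta])$ and $\Upsilon(Y)=[W]\,\chi(\Delta)\,([\mathbb{A}^1]-1)^{p}$. By construction of $b_m$, one has $b_{m}([W])=[W]\,[\mathbb{A}^1]^{-m\wt(\gamma_{W})}$ when $\delta\mid m$ and $b_{m}([W])=0$ otherwise, so, since the Hadamard product is coefficientwise multiplication,
\[
Z(Y)(T)=[W]\cdot(g_{W}\ast g_{\Delta})(T),
\]
where $g_{W}(T)=\sum_{\ell\ge 1}[\mathbb{A}^1]^{-\ell\delta\wt(\gamma_{W})}T^{\ell\delta}$ is a geometric block and $g_{\Delta}(T)=\sum_{m\ge 1}a_{m}([\Delta])T^{m}$. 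Thus it suffices to show $g_{\Delta}\in R[T]_{\dagger}\cap TR\llb T\rrb$, of degree $\le 0$, with $\lim_{T\to\infty}g_{\Delta}=-\chi(\Delta)([\mathbb{A}^1]-1)^{p}$: then $g_{W}$ visibly has the same properties with $\lim_{T\to\infty}g_{W}=-1$, and Lemma~\ref{convol} yields $Z(Y)(T)\in R[T]_{\dagger}$ of degree $\le 0$ with $\lim_{T\to\infty}Z(Y)(T)=[W]\bigl(-(-1)\cdot(-\chi(\Delta)([\mathbb{A}^1]-1)^{p})\bigr)=-\Upsilon(Y)$.

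Next I would establish the boxed claim about $g_{\Delta}$. Both $g_{\Delta}$ and $-\chi(\Delta)([\mathbb{A}^1]-1)^{p}$ are additive in $\Delta$ — the former by additivity of $a_{m}$, and $\lim_{T\to\infty}$ is additive on the degree‑$\le 0$ part of $R[T]_{\dagger}$ since, via $e_{T^{-1}}$, that part embeds in $R\llb T^{-1}\rrb$ with $\lim_{T\to\infty}$ equal to the constant term. So by o‑minimal quantifier elimination and cell decomposition in $\Gamma$ (the toolkit used in the proof of Lemma~\ref{chi0}) we may assume $\Delta=\prod_{i=1}^{p}\Delta_{i}$ with each $\Delta_{i}$ either a rational point or a bounded open interval with rational endpoints; the passage from the (a priori triangular) cells of a cell decomposition to honest product cells, and/or the direct rationality in the auxiliary variable together with the value at infinity, is read off as in the lattice‑point computations of \cite{motivic} and \cite{DLduke} (note that $a_{m}([\Delta])$ is, up to the factor $([\mathbb{A}^1]-1)^{p}$, the specialization at $[\mathbb{A}^1]$ of the Laurent polynomial $\sum_{\gamma\in\Delta\cap(m^{-1}\Zz)^{p}}T^{-m\wt(\gamma)}$ of \eqref{alphasum}, so the Brion–Ishida formalism already used in Lemma~\ref{sbrat} applies). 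For a product cell, additivity of $\wt$ and the coordinatewise nature of the condition $\gamma\in(m^{-1}\Zz)^{p}$ give $a_{m}([\Delta])=\prod_{i}a_{m}([\Delta_{i}])$, so $g_{\Delta}=g_{\Delta_{1}}\ast\cdots\ast g_{\Delta_{p}}$ with $g_{\Delta_{i}}(T)=\sum_{m\ge 1}a_{m}([\Delta_{i}])T^{m}$.

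Finally I would compute the one‑variable building blocks. For $\Delta_{i}=\{c\}$, $c=r/s$ in lowest terms, $a_{m}([\Delta_{i}])$ equals $([\mathbb{A}^1]-1)[\mathbb{A}^1]^{-mc}$ when $s\mid m$ and $0$ otherwise, so $g_{\Delta_{i}}(T)=([\mathbb{A}^1]-1)[\mathbb{A}^1]^{-r}T^{s}/(1-[\mathbb{A}^1]^{-r}T^{s})\in R[T]_{\dagger}\cap TR\llb T\rrb$, of degree $0$, with $\lim_{T\to\infty}g_{\Delta_{i}}=-([\mathbb{A}^1]-1)=-\chi(\{c\})([\mathbb{A}^1]-1)$. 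For $\Delta_{i}=(a,b)$ a bounded open interval with rational endpoints, summing the geometric progression over $\{j\in\Zz:ma<j<mb\}$ and using $\lfloor ma\rfloor<\lceil mb\rceil$ for all $m\ge 1$ (so the closed form $a_{m}([\Delta_{i}])=[\mathbb{A}^1]^{-\lfloor ma\rfloor}-[\mathbb{A}^1]^{1-\lceil mb\rceil}$ is valid for every $m$, equalling $0$ when no such $j$ exists), then splitting $m$ into residue classes modulo the denominators of $a$ and $b$, exhibits $g_{\Delta_{i}}$ as a difference $G_{a}(T)-[\mathbb{A}^1]H_{b}(T)$ of elements of $R[T]_{\dagger}\cap TR\llb T\rrb$ of degree $0$ with $\lim_{T\to\infty}G_{a}=\lim_{T\to\infty}H_{b}=-1$; hence $g_{\Delta_{i}}\in R[T]_{\dagger}\cap TR\llb T\rrb$, of degree $\le 0$, with $\lim_{T\to\infty}g_{\Delta_{i}}=[\mathbb{A}^1]-1=-\chi((a,b))([\mathbb{A}^1]-1)$. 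In both cases $\lim_{T\to\infty}g_{\Delta_{i}}=-\chi(\Delta_{i})([\mathbb{A}^1]-1)$. Iterating Lemma~\ref{convol} over the $p$ factors (and then once more with $g_{W}$) gives $Z(Y)(T)\in R[T]_{\dagger}$ of degree $\le 0$, and by repeated use of $\lim(\tilde h)=-\lim(h)\lim(h')$,
\[
\lim_{T\to\infty}g_{\Delta}=(-1)^{p-1}\prod_{i}\lim_{T\to\infty}g_{\Delta_{i}}=(-1)^{p-1}(-1)^{p}\prod_{i}\chi(\Delta_{i})([\mathbb{A}^1]-1)=-\chi(\Delta)([\mathbb{A}^1]-1)^{p},
\]
which completes the reduction. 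The main obstacle is precisely the combinatorial core flagged in the second paragraph — the rationality of $\sum_{m}a_{m}([\Delta])T^{m}$ of degree $\le 0$ with the asserted limit for an arbitrary bounded definable $\Delta$, equivalently the reduction to product cells — for which one leans on the lattice‑point rationality results of \cite{DLduke} (Propositions 5.1.1 and 5.1.2) and Lemma~\ref{convol}; everything else is bookkeeping with the explicit geometric blocks.
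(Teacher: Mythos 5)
Your high‑level strategy — reduce via $\Psi$ to $Y=\Psi([W]\otimes[\Delta])$, write $Z(Y)$ as a Hadamard product, compute the $[W]$‑block explicitly, and feed the $\Delta$‑block plus Lemma~\ref{convol} into the Hadamard formalism — is the same as the paper's. The explicit computations for $\{c\}$ and for a bounded open interval with rational endpoints are fine, and your bookkeeping with signs ($\lim(\tilde h)=-\lim h\cdot\lim h'$) is correct.

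Where the argument diverges from the paper, and where a genuine gap remains, is in the treatment of a \emph{general} bounded definable $\Delta\subset\Gamma^{p}$. You write ``by o‑minimal quantifier elimination and cell decomposition in $\Gamma$ $\ldots$ we may assume $\Delta=\prod_{i=1}^{p}\Delta_{i}$.'' Cell decomposition in $\Gamma^{p}$ does not produce products: it produces triangular cells of the form $\{\ell_{i}(x_{1},\ldots,x_{i-1})\,\square\,x_{i}\,\square\,\ell'_{i}(x_{1},\ldots,x_{i-1})\}$, and these are not, even up to the permitted morphisms of $\mathrm{vol}\Gamma^{2\mathrm{bdd}}[\ast]$ (integral affine, weight‑preserving), decomposable into boxes. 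You flag this yourself in a parenthesis (``the passage from the (a priori triangular) cells of a cell decomposition to honest product cells $\ldots$ is read off as in the lattice‑point computations of \cite{motivic} and \cite{DLduke}''), and again at the end (``the main obstacle is precisely the combinatorial core''). But this is precisely the one substantive step of the proof, and it is left as an appeal to authority rather than an argument. Pushing the Brion--Ishida/Denef--Loeser formalism far enough to extract not only rationality in $T$ but also $\deg\le 0$ and the exact value of $\lim_{T\to\infty}$ is itself a nontrivial piece of work, especially since you would need to track the $(T-1)^{n}$ normalization in $\alpha_{m}$.

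The paper sidesteps all of this with a clean structural device: Lemma~\ref{polytope} first checks the assertion directly for intervals and points in $\Gamma^{1}$ (essentially your one‑variable building blocks), obtains it for the sub‑semi‑ring $K_{+}^{df}(\Gamma^{\mathrm{bdd}}_{\Zz})'$ they generate by iterated use of Lemma~\ref{convol} (your Hadamard argument), and then invokes Lemma~2.21 of \cite{HK2}, which asserts that for any $a\in K_{+}^{df}(\Gamma^{\mathrm{bdd}}_{\Zz})$ there are a nonzero integer $m$ and $b,c$ in the product sub‑semi‑ring with $ma+b=c$. This converts ``arbitrary bounded definable $\Delta$'' into ``integer multiple of a difference of box‑type classes'' without ever touching triangular cells, and the desired conclusion follows by additivity and torsion‑freeness. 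That is the key lemma your proposal is missing: if you want to avoid \cite{HK2}, Lemma~2.21, you would have to actually carry out the triangular‑cell lattice‑point computation, not merely cite that it is possible.
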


\begin{proof}
We may assume $Y$ is of the form
$\Psi ([W ]\otimes [\Delta])$
with $W$ in  $\RES [p]$ and
$\Delta$ in $\Gamma [q]$.
By Lemma \ref{convol},
$Z (Y) (T)$ is the Hadamard product
of
$Z (\Psi ([W ]\otimes 1)) (T)$
and
$Z (\Psi (1\otimes [\Delta] ))(T)$.
Thus it  is enough to prove the statement for
$\Psi ([W ]\otimes 1)$ and
$\Psi (1\otimes [\Delta])$.
By construction,
\begin{equation}Z (\Psi ([W ]\otimes 1)) (T)
=
[W] 
\sum_{m \geq 1} [\Aa^1]^{ - \alpha m} \, T^{\beta m}\end{equation}
for some integers $\alpha \in \Zz$ and $\beta \geq 1$.
Hence 
$Z (\Psi ([W ]\otimes 1)) (T)$ belongs to
$\mathord{!K} ( \RES ) ([{\mathbb{A}^1}]^{-1}) [T]_{\dagger}$,
 $\lim_{T \to \infty} Z (\Psi ([W ]\otimes 1)) (T) $ exists and  is equal to
 $- [W]  
 = 
 - \Upsilon (\Psi ([W ]\otimes 1))$.

The statement for
$\Psi (1\otimes [\Delta])$ follows from Lemma \ref{polytope}, using
the morphism
$\mathbb{Z} [U, U^{-1}] \to
\mathord{!K} ( \RES) ([{\mathbb{A}^1}]^{-1})$ sending $U$ to $[{\mathbb{A}^1}]^{-1}$.
\end{proof}

\begin{lem}\label{polytope}
Let $\Delta$ be a bounded definable subset of $\Gamma^n$.  Let $\ell \colon \Delta \to \Gamma$
be piecewise \textup{(}i.e. on each piece of a finite definable partition\textup{)} of the form
$x = (x_i) \mapsto \sum a_i x_i + b$, with the $a_i$'s and $b$ in $\mathbb{Z}$.
For every integer $m \geq 1$,
set
\[
s_m (\Delta, \ell) =
\sum_{(\gamma_1, \dots, \gamma_n) \in \Delta \cap (m^{-1} \mathbb{Z})^n}
U^{- m {\ell(\gamma)}},
\]
in
$\mathbb{Z} [U, U^{-1}]$
and set
\[
Z (\Delta, \ell) (T) = \sum_{m\geq 1} s_m (\Delta, \ell) \, T^m
\]
in $\mathbb{Z} [U, U^{-1}] \llb T \rrb$.
Then, the series $Z (\Delta, \ell)$
belongs to
$\mathbb{Z} [U, U^{-1}] [T]_{\dagger}$, $\lim_{T \to \infty} Z (\Delta, \ell) (T) $ exists and
\[
\lim_{T \to \infty} Z (\Delta, \ell) (T)
=
 - \chi (\Delta).
\]
\end{lem}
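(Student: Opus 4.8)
The plan is to reduce the statement to the case where $\Delta$ is a single rational polytope and $\ell$ is linear, then invoke the rationality results of Brion and Ishida already used in the proof of Lemma \ref{sbrat}. First I would remark that both $\Delta \mapsto Z(\Delta,\ell)$ and $\Delta \mapsto \chi(\Delta)$ are additive in $\Delta$ (for disjoint definable unions, with $\ell$ restricted accordingly); hence by o-minimal cell decomposition (cf. \cite{vddries-tame}) and by refining the partition on which $\ell$ is linear, it suffices to treat the case where $\Delta$ is a bounded cell — and, after a further $\mathrm{GL}_n(\mathbb{Z})$-change of coordinates absorbing the linear part of $\ell$ into a single coordinate, where $\ell$ is (up to an integer constant $b$, which only shifts by the invertible factor $U^{-mb}$) the coordinate projection $\gamma \mapsto \gamma_1$. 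For such a $\Delta$, which is a bounded rational polytope, consider as in Lemma \ref{sbrat} the multivariable series $\Phi_\Delta(U_1,\dots,U_n) = \sum_{\gamma \in \Delta \cap \mathbb{Z}^n} \prod U_i^{\gamma_i}$; by \cite{Brion} and \cite{Ishida} it lies in the localization of $\mathbb{Z}[U_i^{\pm 1}]$ at the family $1 - \prod U_i^{\nu_i}$, and since $\Delta$ is bounded (not merely bounded below), evaluating $\Phi_\Delta$ along a suitable line produces the generating function over $m$ of the sums $s_m(\Delta,\ell)$.

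More precisely, the next step is to organize the double sum. For $m \geq 1$ the lattice points of $\Delta \cap (m^{-1}\mathbb{Z})^n$ are in bijection with $(m\Delta) \cap \mathbb{Z}^n$ via $\gamma \mapsto m\gamma$, and under this bijection $U^{-m\ell(\gamma)} = U^{-(m\gamma)_1}$ in the normalized coordinates. So $s_m(\Delta,\ell) = \sum_{\delta \in (m\Delta)\cap\mathbb{Z}^n} U^{-\delta_1}$, and
\[
Z(\Delta,\ell)(T) = \sum_{m\geq 1} T^m \sum_{\delta \in (m\Delta)\cap \mathbb{Z}^n} U^{-\delta_1}
= \sum_{\delta \in C_\Delta \cap \mathbb{Z}^{n+1}} U^{-\delta_1}\, T^{\delta_0},
\]
where $C_\Delta \subset \mathbb{R}_{\geq 1}\times\mathbb{R}^n$ is the "cone over $\Delta$", i.e. $\{(m,\delta) : m \geq 1,\ \delta \in m\Delta\}$, which is (the lattice points of) a rational polyhedral cone because $\Delta$ is a bounded rational polytope. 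Applying \cite{Brion}--\cite{Ishida} to $C_\Delta$, the series $\sum_{\delta\in C_\Delta\cap\mathbb{Z}^{n+1}} U^{-\delta_1}T^{\delta_0}$ is a rational function whose denominator is a product of terms $1 - U^aT^b$ with $b \geq 1$ (the $b\geq 1$ because every ray of $C_\Delta$ has positive $T$-coordinate, $\Delta$ being bounded), hence lies in $\mathbb{Z}[U,U^{-1}][T]_\dagger$; this proves the first assertion.

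For the limit, the cleanest route is to compute $\lim_{T\to\infty}$ by passing to the expansion ${e}_{T^{-1}}$ as in \S 6.4: for a rational polyhedral cone, Brion's theorem expresses the generating function as an alternating sum over the vertices/faces, and the behaviour as $T\to\infty$ picks out precisely the Euler-characteristic count of $\Delta$. Concretely, I would argue by additivity once more: write $[\Delta]$ in the Grothendieck group $K(\mathrm{vol}\Gamma^{2\mathrm{bdd}}[n])$ as a $\mathbb{Z}$-combination of classes of simplices (or half-open boxes) — this is possible by cell decomposition — compute $\lim_{T\to\infty} Z(\cdot,\ell)$ for each such elementary piece by an explicit geometric-series calculation (for a half-open box $\prod(0,c_i]\cap m^{-1}\mathbb{Z}$ one gets a product of $\frac{T^{?}}{1-T^{?}}$-type factors whose $T\to\infty$ limit is $(-1)^{(\text{number of factors})}$, matching $\chi$ of the box), and then invoke the convolution/limit compatibility of Lemma \ref{convol}(2) together with additivity of $\chi$ to assemble $\lim_{T\to\infty} Z(\Delta,\ell)(T) = -\chi(\Delta)$. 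The main obstacle I anticipate is bookkeeping: making the cone $C_\Delta$ genuinely a rational polyhedral cone when $\Delta$ is only a bounded cell rather than a closed polytope (one must handle the open faces — either by taking closures and using additivity/inclusion-exclusion, exactly as in \cite{Brion}, \S 3.3, which is already cited in Lemma \ref{sbrat}, or by working with half-open decompositions throughout), and checking that the $T\to\infty$ limit is insensitive to these boundary adjustments because each lower-dimensional contribution is itself of the same rational form with a strictly larger power of $T$ in the numerator relative to denominator, so it either cancels in pairs or contributes to $\chi$ with the correct sign. Once the reduction to boxes/simplices is in place, the remaining computation is the standard one behind the o-minimal Euler characteristic of $\Gamma$ and should go through routinely.
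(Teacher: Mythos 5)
Your proposal takes a genuinely different route from the paper's. The paper does not use the cone-over-$\Delta$ construction or Brion--Ishida here at all (those only appear in Lemma~\ref{sbrat}); instead it works entirely inside the Grothendieck semi-ring $K_+^{df}(\Gamma^{\mathrm{bdd}}_{\mathbb Z})$, verifies the claim by a direct geometric-series computation for one-dimensional intervals, promotes it to products of intervals via Lemma~\ref{convol}, and then invokes Lemma~2.21 of \cite{HK2} — a structure theorem asserting that for any class $a$ there exist $m\neq 0$ and $b,c$ in the sub-semi-ring generated by one-dimensional classes with $ma+b=c$ — to cover the general case by cancellation. That last step is exactly what replaces your ``cell decomposition to boxes'' reduction.

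This is the point where your proposal has a real gap. Cell decomposition in $\Gamma^n$ produces cells, not boxes: a typical cell looks like $\{(x_1,x_2):0<x_1<1,\ 0<x_2<x_1\}$, with sloped faces. It is not a purely formal matter to express $[\Delta]$ in $K(\mathrm{vol}\Gamma^{2\mathrm{bdd}}[n])$ as a $\mathbb Z$-combination of boxes or standard simplices in a way compatible with the weighted sums $s_m$; one either needs an explicit computation for arbitrary cells (not just boxes), or precisely the kind of semi-ring structure lemma the paper cites. Your alternative route via the $(n+1)$-dimensional ``truncated cone'' $C_\Delta$ and Brion--Ishida is plausible for the rationality statement (the recession cone does have all rays with positive last coordinate, so the denominators are of the form $1-U^aT^b$ with $b\geq 1$), but for the limit you still fall back on ``cell decomposition to boxes plus convolution,'' so the gap remains. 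If you want to avoid Lemma~2.21 of \cite{HK2}, you would need to either compute $\lim_{T\to\infty}$ directly from the Brion--Ishida expression (which is doable but requires a careful analysis of which terms survive as $T\to\infty$, essentially re-proving an Ehrhart-reciprocity-style statement), or carry out a genuine half-open cell decomposition with an explicit geometric-series computation for each sloped cell. As written, the proposal overstates how routine that step is.
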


\begin{proof}Let $\Delta$ be a bounded definable subset of $\Gamma^n$. We shall say the
lemma holds for $\Delta$ if it holds for $\Delta$ and any $\ell$.
If $\Delta'$ is  another bounded definable subset of $\Gamma^n$
such that $[\Delta] = [\Delta']$ in
$K_+ (\Gamma^{\mathrm{bdd}}_{\mathbb{Z}}) [n]$, then the lemma holds for
$\Delta$ if and only it holds for $\Delta'$. Thus the property  for $\Delta$
depends only on its  class in $K_+ (\Gamma^{\mathrm{bdd}}_{\mathbb{Z}}) [n]$.
Localisation with respect to
$[0]_1$ is harmless here, and one deduces that  the property of satisfying the lemma for $\Delta$
depends only on its  class
$[\Delta]/ [0]_1^n$
in
$K_+^{df} (\Gamma^{\mathrm{bdd}}_{\mathbb{Z}})$.
We shall say $[\Delta]/ [0]_1^n$
satisfies the lemma if $\Delta$ does.

Let $I$ be a definable bounded interval in
$\Gamma$ and $\ell' \colon \Gamma \to \Gamma$ a  linear form
$x \mapsto a x + b$ with $a$ and $b$ in $\mathbb{Z}$.
Then,  by a direct geometric series computation one gets that the lemma holds for
$I$ and $\ell'$. It follows that the lemma holds for $I$ and any $\ell$.
In particular, the lemma holds for the subsets
$[0, \gamma)$ and $\{\gamma\}$ of $\Gamma$, with $\gamma$  in $\mathbb{Q}$.
Let $K_+^{df} (\Gamma^{\mathrm{bdd}}_{\mathbb{Z}})'$ be the sub-semi-ring of
$K_+^{df} (\Gamma^{\mathrm{bdd}}_{\mathbb{Z}})$ generated by
$[\gamma]_1 / [0]_1$ and
$[0, \gamma)_1 / [0]_1$, for
$\gamma$  in $\mathbb{Q}$.
It follows from Lemma \ref{convol}
that the lemma holds for all elements in
$K_+^{df} (\Gamma^{\mathrm{bdd}}_{\mathbb{Z}})'$ since it holds
for the generators $[\gamma]_1 / [0]_1$ and
$[0, \gamma)_1 / [0]_1$.
By Lemma 2.21 of \cite{HK2}, for any element $a$
in
$K_+^{df} (\Gamma^{\mathrm{bdd}}_{\mathbb{Z}})$
there exists a nonzero
$m \in \mathbb{N}$, $b$ and $c$ in
$K_+^{df} (\Gamma^{\mathrm{bdd}}_{\mathbb{Z}})'$
such that $m a + b = c$. Since the lemma holds for
$b$ and $c$, it follows that the lemma holds for $ma$, hence for $a$, and the statement follows.
\end{proof}

\begin{cor}\label{limitcor}
Let $X$ be a smooth  variety over $F$, $f$ a regular function on $X$ and $x$ a closed point of
$f^{-1} (0)$.
Then the image of $\mathcal{S}_{f, x}$
in $K^{\hat \mu}(\Var_F)_{\mathrm{loc}}$ is equal to 
\[
\Theta \Bigl(\Upsilon \Bigl(\int ([\mathcal{X}_x])\Bigr) \Bigr).
\]
\end{cor}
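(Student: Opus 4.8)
The plan is to combine Corollary~\ref{5vol}, Proposition~\ref{limit}, and the fact that the isomorphism $\Theta$ of Proposition~\ref{Theta} is compatible with localization and with the operation $\lim_{T\to\infty}$ on $[T]_{\dagger}$-rings. Write $Y=\int([\mathcal{X}_x])$, which we view — after the admissible transformations of Proposition~\ref{2}, exactly as in the proof of Proposition~\ref{3vol} — as a class in $K(\mathrm{vol}\RV^{2\mathrm{bdd}}[\ast])$; recall from Remark~\ref{rema:vol} that this class is well defined since, after shrinking $X$, it carries a volume form over $F$. First I would invoke Proposition~\ref{limit}: the series $Z(Y)(T)=\sum_{m\geq 1}h_m(Y)\,T^m$ lies in $\mathord{!K}(\RES)([\mathbb{A}^1]^{-1})[T]_{\dagger}$, its limit exists, and $\lim_{T\to\infty}Z(Y)(T)=-\Upsilon(Y)$.

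Next I would push everything through $\Theta\colon \mathord{!K}(\RES)([\mathbb{A}^1]^{-1})\to K^{\hat\mu}(\Var_F)[[\mathbb{A}^1]^{-1}]$. Since $\Theta([\mathbb{A}^1])=[\mathbb{A}^1]$, it extends to an isomorphism of the corresponding $[T]_{\dagger}$-rings (with $\Aa=[\mathbb{A}^1]$), commuting with the expansion $e_T$, with $\deg$, and therefore with $\lim_{T\to\infty}$. Hence $\Theta(Z(Y)(T))=\sum_{m\geq 1}\Theta(h_m(Y))\,T^m$ and $\lim_{T\to\infty}\Theta(Z(Y)(T))=-\Theta(\Upsilon(Y))$. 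Applying the localization morphism $\iota\colon K^{\hat\mu}(\Var_F)[[\mathbb{A}^1]^{-1}]\to K^{\hat\mu}(\Var_F)_{\mathrm{loc}}$, the commutative square relating $h_m$ and $\tilde h_m$ together with the $I'_{\sp}$-invariance of $\tilde h_m$ and the compatibility of $\Theta$ with these localizations give $\iota(\Theta(h_m(Y)))=\Theta(\tilde h_m(\int([\mathcal{X}_x])))$ for every $m$, so Corollary~\ref{5vol} yields $\iota(\Theta(Z(Y)(T)))=\tilde Z_{f,x}(T)$. As $\iota$ also commutes with $\lim_{T\to\infty}$, taking limits gives $\lim_{T\to\infty}\tilde Z_{f,x}(T)=-\iota(\Theta(\Upsilon(Y)))$.

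On the other hand $\tilde Z_{f,x}(T)=\iota(Z_{f,x}(T))$ by the very definition of $\tilde Z_{f,x}$, and $Z_{f,x}(T)\in K^{\hat\mu}(\Var_F)[[\mathbb{A}^1]^{-1}][T]_{\dagger}$ with $\lim_{T\to\infty}Z_{f,x}(T)=-\mathcal{S}_{f,x}$ by the definition \eqref{def:mmf}; hence $\lim_{T\to\infty}\tilde Z_{f,x}(T)=-\iota(\mathcal{S}_{f,x})$. Comparing the two computations of $\lim_{T\to\infty}\tilde Z_{f,x}(T)$ gives $\iota(\mathcal{S}_{f,x})=\iota(\Theta(\Upsilon(Y)))$, that is, the image of $\mathcal{S}_{f,x}$ in $K^{\hat\mu}(\Var_F)_{\mathrm{loc}}$ equals $\Theta(\Upsilon(\int([\mathcal{X}_x])))$, as claimed.

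The main obstacle here is bookkeeping rather than any single hard step: one must check that $\int([\mathcal{X}_x])$ genuinely produces a class in $K(\mathrm{vol}\RV^{2\mathrm{bdd}}[\ast])$, so that Proposition~\ref{limit} applies and $h_m$ and $\tilde h_m$ match after localization, and that the four operations in play — $\Theta$, the localization $\iota$, the power-series expansion $e_T$, and $\lim_{T\to\infty}$ — all commute, which in the end rests only on the facts that $\Theta$ and $\iota$ are ring morphisms fixing $[\mathbb{A}^1]$ and that $\lim_{T\to\infty}$ is defined purely through the leading coefficients of elements of $R[T]_{\dagger}$. One should also note that, although $\Upsilon$ is not a priori defined on the quotient by $I'_{\sp}$, the computation above pins down $\Theta(\Upsilon(Y))$ unambiguously, consistently with the well-definedness of $\tilde h_m(\int([\mathcal{X}_x]))$ already used in Proposition~\ref{3vol}.
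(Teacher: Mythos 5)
Your argument is correct and follows exactly the route the paper intends: the paper's own proof is the one-liner ``This follows directly from Corollary \ref{5vol} and Proposition \ref{limit},'' and what you have written is the careful unwinding of that one-liner, including the bookkeeping checks (that $\Theta$ and $\iota$ fix $[\mathbb{A}^1]$ and hence commute with $e_T$ and $\lim_{T\to\infty}$, that $\int([\mathcal{X}_x])$ admits a representative in $K(\mathrm{vol}\RV^{2\mathrm{bdd}}[\ast])$, and that the ambiguity modulo $I'_{\sp}$ disappears after localizing) which the paper leaves implicit.
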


\begin{proof}
This follows directly from Corollary \ref{5vol} and Proposition \ref{limit}.
\end{proof}


\begin{rema}\label{hfactors}It is not known whether the localisation morphisms
$\iota \colon K(\Var_F) [ [\mathbb{A}^1]^{-1}] \to K(\Var_F)_{\mathrm{loc}}$
and
$\iota \colon K^{\hat \mu}(\Var_F) [ [\mathbb{A}^1]^{-1}] \to K^{\hat \mu}(\Var_F)_{\mathrm{loc}}$
are injective.
However, the morphism $H \colon K(\Var_F) [ [\mathbb{A}^1]^{-1}]  \to \mathbb{Z} [u, v, u^{-1}, v^{-1}]$
induced by the Hodge-Deligne polynomial vanishes on the kernel of $\iota$, hence factors through the image of $\iota$.
In particular, the Euler characteristic with compact supports $\chi_c \colon K(\Var_F) [ [\mathbb{A}^1]^{-1}]  \to \mathbb{Z} $
factors through the image of $\iota$. This extends to the equivariant setting, in particular one can
recover the Hodge-Steenbrink spectrum of $f$ at $x$ from
the image of
$\mathcal{S}_{f, x}$ in $K^{\hat \mu}(\Var_F)_{\mathrm{loc}}$, cf. \cite{barc}, \cite{seattle}.
\end{rema}

\begin{rema}\label{carfibre}When $F = \mathbb{C}$, Theorem \ref{mt} together with Corollary \ref{limitcor} provides a proof avoiding resolution of singularities
that the topological Milnor fiber $F_x$ and the motivic Milnor fiber $\mathcal{S}_{f, x}$ have the same Euler characteristic with compact supports, namely that
$\chi_c (F_x) = \chi_c (\mathcal{S}_{f, x})$.
Indeed, by Remark \ref{hfactors} one may apply  $\chi_c$ to
(\ref{def:mmf}), thus getting
$\chi_c (\mathcal{S}_{f, x}) = - \lim_{T \to \infty} \sum_{m \geq 1}  \chi_c (\mathcal{X}_{m,x}) T^m$,
which may be rewritten,  by Theorem \ref{mt}, as
$
\chi_c (\mathcal{S}_{f, x}) = - \lim_{T \to \infty} \sum_{m \geq 1}  \Lambda (M_x^m) T^m$.
By quasi-unipotence of local monodromy (a statement for which there exist proofs
not using resolution of singularities, see, e.g., SGA 7 I 1.3),
there is an integer $m_0$ such that all eigenvalues of $M_x$ on the cohomology groups of
$F_x$ have order dividing $m_0$.
Thus
$
 \sum_{m \geq 1}  \Lambda (M_x^m) T^m
$
can be rewritten as
$\sum_{1 \leq i \leq m_0} \Lambda (M_x^i) \,  \frac{T^i}{ 1 - T^{m_0}} $
and the equality 
$ \chi_c (\mathcal{S}_{f, x}) = 
\Lambda (M_x^{m_0}) =  \chi_c (F_x)
$ follows.
\end{rema}

\bibliographystyle{amsplain}

\end{document}